\newcommand{\hl}{\sl}
\newcommand{\hd}{\sl}
\newcommand{\ba}{\begin{array}}
\newcommand{\ea}{\end{array}}
\newcommand{\const}{\mbox{\rm const}}
\newcommand{\LL}{\mbox{\rm L}}
\newcommand{\Cnt}{\mbox{\rm C}}
\newcommand{\id}{\mbox{\rm id}}
\newcommand{\pr}{\mbox{\rm pr}}
\newcommand{\prsm}{\mbox{\rm \scriptsize pr}}
\renewcommand{\d}{\mathrm{d}}
\newcommand{\e}{\mathrm{e}}
\newcommand{\vol}{\mbox{\rm vol}}
\newcommand{\bs}{{\mathcal B}}
\newcommand{\Gr}{{Gr}}
\newcommand{\con}{{\mathcal C}}
\newcommand{\NN}{\mathbb{N}} \newcommand{\ZZ}{\mathbb{Z}}
 \newcommand{\RR}{\mathbb{R}}
\newcommand{\XX}{X}
\newcommand{\ganz}{\overline{\XX}}
\newcommand{\rand}{\partial\XX}
\newcommand{\joinrand}{\partial^2\XX}
\newcommand{\Lim}{L_\Gamma}          
\newcommand{\radlim}{L_\Gamma^{\small{\text{rad}}}}
\newcommand{\at}{\!\cdot\!}
\newcommand{\width}{\omega}
\newcommand{\xo}{{o}}
\newcommand{\gamo}{{\gamma\xo}}
\newcommand{\be}{\begin{eqnarray*}}
\newcommand{\ee}{\end{eqnarray*}}
\newcommand{\ein}{\,\rule[-5pt]{0.4pt}{12pt}\,{}}
\newcommand{\is}{\mbox{Is}}
\newcommand{\Ax}{\mbox{Ax}}
\newcommand{\supp}{\mbox{supp}}
\newcommand{\st}{\mbox{such}\ \mbox{that}\ }
\newcommand{\wrt}{\mbox{with}\ \mbox{respect}\ \mbox{to}\ }
\DeclareMathOperator*{\esssup}{ess\,sup} % ess. supremum
\DeclareMathOperator*{\essinf}{ess\,inf} % ess. infimum
\newtheorem{theorem}{Theorem}[section]
\newtheorem{corollary}{Corollary}
\newtheorem*{main}{Main Theorem}
\newtheorem{lemma}[theorem]{Lemma}
\newtheorem{proposition}{Proposition}
\theoremstyle{definition}
\newtheorem{definition}[theorem]{Definition}
\newtheorem{remark}{Remark}
\newcommand{\ep}{\varepsilon}
\title[Ergodic geometry for rank one manifolds] %Use the shortened version of the full title
      {Ergodic geometry for non-elementary rank one manifolds}
\author[Gabriele Link and Jean-Claude Picaud]{}
\subjclass{Primary: 20H10, 22D40; Secondary: 20F67.}
 \keywords{Hopf Tsuji Sullivan dichotomy, Patterson-Sullivan measures, rank one manifolds.}
 \email{gabriele.link@kit.edu}
 \email{jean-claude.picaud@univ-tours.fr}
\begin{document}
\bibliographystyle{AIMS.bst}
\maketitle

% Enter the first author's name and address:
\centerline{\scshape Gabriele Link$^*$}
\medskip
{\footnotesize
% please put the address of the first author
 \centerline{Institut f\"ur Algebra und Geometrie}
   \centerline{Karlsruhe Institute of Technology (KIT)}
   \centerline{Englerstr.~2}
   \centerline{ 76 131 Karlsruhe, Germany}
} % Do not forget to end the {\footnotesize by the sign }

\medskip

\centerline{\scshape Jean-Claude Picaud}
\medskip
{\footnotesize
 % please put the address of the second  and third author
 \centerline{ LMPT, UMR 6083}
 \centerline{Facult\'e des Sciences et Techniques}
   \centerline{Parc de Grandmont}
   \centerline{37 200 Tours, France}
}

\bigskip

% The name of the associate editor will be entered by an editorial staff
% "Communicated by the associate editor name" is not needed for special issue.
 \centerline{(Communicated by the associate editor name)}

%The abstract of your paper
\begin{abstract}
Let $\XX$ be a Hadamard manifold, and $\Gamma\subset\is(\XX)$ a non-elementary discrete subgroup of isometries of $\XX$ which
contains a rank one isometry. We relate the ergodic theory of the geodesic flow of the quotient orbifold $M=\XX/\Gamma$ to the behavior of the Poincar{\'e} series of $\Gamma$.
Precisely, the aim of this  paper is to extend  the so-called theorem of Hopf-Tsuji-Sullivan -- well-known for manifolds of pinched negative curvature -- 
to the framework of  rank one orbifolds. Moreover, we derive some important properties for $\Gamma$-invariant conformal densities supported on the geometric 
limit set of $\Gamma$.  
\end{abstract}

%The title of your section 1
\section{Introduction}
\subsection{Problem, motivations and results}

Let $\XX$ be a complete simply connected Riemannian manifold of non-positive sectional curvature, $\Gamma\subset\is(\XX)$ a non-elementary discrete group and $M:=X/\Gamma$ the quotient orbifold 
(which will be called non-elementary as $\Gamma$ is). The Poincar\'e series of $\Gamma$ is defined by
 $$P(s;x,y):=\sum_{\gamma\in\Gamma} \e^{-sd(x,\gamma y)},\quad  x,y\in\XX, $$
where $d$ is the Riemannian distance on $\XX$. 
The number 
$$\delta(\Gamma):=\limsup_{R\to +\infty}\frac{1}{R}\ln \#\{\gamma\in\Gamma\colon d(x,\gamma y)\leq R\}$$
is called the {\hl critical exponent} of $\Gamma$ and is independent of $x,y\in\XX$.
Clearly $P(s;x,y)$ converges for
   $s>\delta(\Gamma)$ and  diverges for $s<\delta(\Gamma)$. 
If $P(\delta(\Gamma);x,y)$ diverges, the group $\Gamma$ is said to be {\hl divergent}, otherwise $\Gamma$ is said to be {\hl convergent}.

 Using this Poincar\'e series, a remarkable class of measures $(\mu_x)_{x\in X}$ supported on the geometric boundary $\rand$ of $\XX$ -- a so-called $\Gamma$-invariant conformal density -- has been  
constructed in the CAT$(-1)$ setting (see \cite{MR0450547} and  \cite{MR556586} for the original constructions, 
then  \cite{MR1348871}, \cite{MR1293874}, \cite{MR1207579}  for extensions, and \cite{MR2057305} for a clear account and deep applications of this theory). 

If $\Gamma$ is torsion-free, then the quotient $M=\XX/\Gamma$ is a manifold, and 
a measure $m_\Gamma$ (called {\hl Patterson-Sullivan measure} or in the cocompact setting {\hl Bowen-Margulis measure} for the third reason below) on the unit tangent bundle $SM$ can be derived from the conformal density mentioned above. In the rank one framework (the framework we are in) the Bowen-Margulis measure $m_\Gamma$ has been first introduced by G.~Knieper in \cite{MR1652924}.

For several reasons, Patterson-Sullivan measure $m_{\Gamma}$ (or equivalently the $\Gamma$-in\-variant conformal density  which induces $m_{\Gamma}$) is of central importance in the dyna\-mical studies of the geodesic flow $(g_\Gamma^t)_{t\in\RR}$ acting on $SM$. We only want to mention a few here:

First, for  a compact rank one locally symmetric manifold (which has negative sectional curvature) the measure $m_{\Gamma}$ is an alternative description of the Liouville measure on the unit tangent bundle. 

Second, for real hyperbolic quotients $M=\mathbb{H}^n/\Gamma$, harmonic analysis of $M$ is closely related to the study of conformal densities (see \cite{MR556586}, \cite{MR1041575}, \cite{MR2166367}, \cite{MR1293874}).

Third, it turns out that for compact geometric rank one manifolds  $m_{\Gamma}$ is the unique measure of maximal entropy (see \cite{MR1652924} and references therein, as previous results had been obtained before Knieper's one in negative curvature; see also \cite{MR2097356} for an interesting study in the non-compact and pinched negative curvature case).

And finally, from the local decomposition of $m_{\Gamma}$ and the conformality property of the conditionals (and its main consequence, the famous shadow lemma) solutions to counting 
problems associated with the discrete group $\Gamma$ can be derived (see again \cite{MR2057305}).

The questions we are concerned with are of {\it dynamical} nature. Namely, we are interested in criteria
 for (complete) conservativity (i.e. absence of a wandering set modulo null sets  -- we refer to Section~\ref{BMmeasure} for precise definitions of conservativity, dissipativity and Hopf's decomposition theorem) or ergodicity (i.e. all invariant sets either have full measure or measure zero) of the geodesic flow $(g_{\Gamma}^t)_{t\in\RR}$ acting on $SM$ with 
respect to the invariant measure $m_{\Gamma}$. Such criteria are provided in the CAT$(-1)$ setting by
 Hopf-Tsuji-Sullivan's dichotomy (HTS dichotomy for short), the story of which began maybe with Poincar\'e's
 recurrence theorem applied to Riemann surfaces and later in the 1930's by E. Hopf's seminal
 work (see \cite{Hopf} and \cite{MR0284564}). Actually, it was observed a long time ago  that  either 
the geodesic flow on a hyperbolic Riemann surface is completely conservative and ergodic with respect to Liouville measure
 or it is completely dissipative and non-ergodic. Tsuji's work (see \cite{MR0414898}) in particular points out
 that conservativity is equivalent to ergodicity. 
 
The aim of this paper is to prove HTS dichotomy for  {\hl non-elementary rank one} manifolds $M$ and with respect to Patterson-Sullivan measure. Here a manifold $M$ is called {\hl non-elementary} if  $\Gamma\cong \pi_1(M)$ is non-elementary (see the precise definition at the beginning of Section~\ref{geomesti}), and {\hl rank one} if there exists  a closed geodesic $\sigma$ in $M$ along which there is no perpendicular parallel Jacobi field. 

We want to mention here two important points concerning the notion of geometric rank one: First, in the definition of a geometric rank one manifold it is sometimes not required that the geodesic without perpendicular parallel Jacobi field is closed. However, by the  rank rigidity theorem of 
Ballmann \cite{MR819559} and Burns-Spatzier \cite{MR908215} the different definitions coincide when $M$ is of finite volume; the same is true in general when $M$ has dimension $2$. But to our knowledge this coincidence is not clear in general and we need the periodicity assumption in our main theorem. 

Second, the weakest possible assumption when considering Hadamard manifolds $X$ (and their quotients $M$) that are not higher rank symmetric spaces or Riemannian products would be to require the existence of a geodesic in $X$ which does not bound a flat half plane.  We will call such a geodesic {\hl weak rank one} in the sequel.  
Again, from the rank rigidity result cited above (see also \cite{MR743026}) 
it follows  that for Hadamard manifolds $\XX$ admitting a quotient of finite volume the existence of such a weak rank one geodesic implies that  $\XX$ has a {\hl strong rank one} geodesic, i.e. a geodesic without perpendicular parallel Jacobi field. 
By  the comment below Theorem~2 in \cite{MR799256} the same is true when $\XX$ is $2$-dimensional.  
So if $\XX$ admits a quotient of finite volume or if $\dim(X)=2$, then  every non-elementary manifold $M=\XX/\Gamma$ with a weak rank one geodesic already has a strong rank one geodesic. 
However, we have no idea if the same holds when $\XX$  does not admit a finite volume quotient and $\dim(\XX)\ge 3$.
Roughly speaking, the main result we obtain can be stated  as follows:
 \begin{main} Let $M=X/\Gamma$ be a non-elementary  strong rank one manifold. Then
either the group $\Gamma$ is convergent and the geodesic flow is completely dissipative and non-ergodic \wrt  $m_\Gamma$, or $\Gamma$ is divergent and the geodesic flow is 
completely conservative and ergodic \wrt $m_\Gamma$.
\end{main}

It is worth noticing that given $\Gamma$, the question to decide whether or not the quotient manifold has a conservative -- or ergodic -- geodesic flow with respect to Patterson-Sullivan measure {\hl or} Liouville measure is a different one. 
  In view of HTS dichotomy, the work
 \cite{MR1776078}  in variable negative curvature, as well as \cite{MR1676950} for the specific case of abelian coverings of the 3-punctured sphere should be considered (see also \cite{MR1348871}).

 We further remark that the action of the geodesic flow on $SM$ can be read off from the universal covering $X$ as an action of $\Gamma$ on the space of geodesics (defined uniquely up to 
reparametrization by a unit tangent vector). Each geodesic in $X$ has two end points and  the $\Gamma$-action on that space is actually an action on the space of end point pairs of geodesics 
(denoted $\partial^2X$ in what follows) on which the measure class $(\mu_o\otimes\mu_o)_{| \partial^2X}$ is invariant by the action of $\Gamma$. Nevertheless, a statement 
equivalent to the main theorem above %~\ref{HTS} 
for the latter dynamical system can be obtained here only for surfaces; the precise statement can be found in Theorem~\ref{HTS2}.

Notice that in order to recover all our results below except for Proposition~\ref{divseries} and Proposition~\ref{conservative},
the weaker assumption that $M$ admits a closed weak rank one geodesic is sufficient.  In order to  state some of our results under this weaker assumption we briefly recall that a  $\delta$-dimensional  $\Gamma$-invariant conformal density $(\mu_x)_{x\in\XX}$ is a $\Gamma$-equivariant continuous map $\mu$ 
of $X$  to the cone 
of positive finite Borel measures on $\rand$ with support in the geometric limit set of $\Gamma$ and prescribed Radon-Nikodym derivatives. An important subset of the geometric limit
set is the radial limit set, which consists of asymptote classes of geodesic rays for which infinitely many orbit points of $\Gamma$ are contained in a fixed tubular neighborhood;
a precise definition is given at the beginning of Section~\ref{propradlimset}. 

Our first result concerning conformal densities is Lemma~\ref{convseries}: If the group $\Gamma$ is convergent, then the radial limit set is a null set with respect to any measure $\mu_x=\mu(x)$ in the class of 
$\mu$.  Moreover, by Proposition~\ref{ergodicity} the $\Gamma$-action on the radial limit set is ergodic \wrt  the measure $\mu_\xo$: 
\begin{theorem}\label{main2}
 Every $\, \Gamma$-invariant Borel subset of the radial limit set is either a null set or has full measure with respect to $\mu_\xo$.   
\end{theorem}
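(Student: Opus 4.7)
The plan is to mimic Sullivan's classical ergodicity argument for Patterson--Sullivan measures, replacing the CAT$(-1)$ inputs by their rank one analogues. Suppose $A\subset\radlim$ is $\Gamma$-invariant with $\mu_\xo(A)>0$; the aim is to prove $\mu_\xo(\rand\setminus A)=0$. The essential ingredients will be (i) a rank one Shadow Lemma providing two-sided estimates of the form $\mu_\xo(O(\xo,B(\gamma\xo,r)))\asymp \e^{-\delta(\Gamma) d(\xo,\gamma\xo)}$ for shadows of orbit balls based at rank one orbit points and at $r$ larger than some threshold; (ii) the conformal change-of-variable formula $d\mu_{\gamma\xo}/d\mu_\xo=\e^{-\delta B_\cdot(\gamma\xo,\xo)}$ together with $\Gamma$-equivariance; and (iii) non-atomicity of $\mu_\xo$, which under non-elementarity plus a strong rank one geodesic follows from the density of rank one fixed point pairs in the limit set.

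First I would use the Shadow Lemma to exhibit the shadows of orbit balls as a Vitali-type differentiation basis for $\mu_\xo$, so that $\mu_\xo$-a.e.\ $\xi\in A$ is a density point: $\mu_\xo(A^c\cap O_k)/\mu_\xo(O_k)\to 0$ for any nested family of such shadows $O_k\ni\xi$ with diameters shrinking to $0$. Pick such a density point $\xi\in A\cap \radlim$. By definition of the radial limit set there are $c>0$ and $\gamma_n\in\Gamma$ with $\gamma_n\xo\to\xi$ and $d(\gamma_n\xo,[\xo,\xi))\le c$, so that $O_n:=O(\xo,B(\gamma_n\xo,r))$, for $r$ larger than $c$ and than the Shadow Lemma threshold, forms a shrinking basis of shadows at $\xi$ to which the density statement applies.

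The decisive step is a transfer identity exploiting the $\Gamma$-invariance of $A^c$. Applying $\gamma_n^{-1}$ and the conformal formula yields
$$\mu_\xo(A^c\cap O_n)=\int_{A^c\cap O(\gamma_n^{-1}\xo,B(\xo,r))} \e^{-\delta B_\zeta(\gamma_n^{-1}\xo,\xo)}\,d\mu_\xo(\zeta).$$
On the reverse shadow $O(\gamma_n^{-1}\xo,B(\xo,r))$ the Busemann cocycle $B_\zeta(\gamma_n^{-1}\xo,\xo)$ differs from $d(\xo,\gamma_n\xo)$ only by an additive constant depending on $r$, so combined with the Shadow Lemma one obtains the uniform comparison
$$\frac{\mu_\xo(A^c\cap O_n)}{\mu_\xo(O_n)}\asymp\mu_\xo\bigl(A^c\cap O(\gamma_n^{-1}\xo,B(\xo,r))\bigr),$$
with constants independent of $n$. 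The left-hand side tends to $0$ by the density point property. Passing to a subsequence with $\gamma_n^{-1}\xo\to\eta\in\rand$, the reverse shadows eventually contain every compact subset of $\rand\setminus\{\eta\}$, and by non-atomicity of $\mu_\xo$ the right-hand side converges to $\mu_\xo(A^c)$. We conclude $\mu_\xo(A^c)=0$, as desired.

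The hard part will be the geometric preparation: the rank one Shadow Lemma and the uniform Busemann control on shadows must both avoid configurations trapped in flat half-planes. This is exactly what the strong rank one hypothesis, together with selecting the radial approach sequence through rank one orbit elements, is designed to handle; these ingredients should be supplied by the preceding sections of the paper. Once they are available, the density argument above yields the ergodicity statement essentially verbatim as in Sullivan's original proof.
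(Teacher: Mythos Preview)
Your overall architecture---density points for shadows, then the conformal transfer to reverse shadows---is exactly the paper's, but the final step has a genuine gap that is specific to the rank one setting.

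You assert that once $\gamma_n^{-1}\xo\to\eta$, the reverse shadows $O(\gamma_n^{-1}\xo,B(\xo,r))$ eventually contain every compact subset of $\rand\setminus\{\eta\}$. This is a visibility statement that is true in CAT$(-1)$ spaces but \emph{fails} in general Hadamard manifolds: already in $\RR^2$, if $y_n=(n,0)\to\eta$ and $\zeta$ is the vertical direction, the ray from $y_n$ toward $\zeta$ never meets any fixed ball around the origin. Since a rank one manifold may contain flats, you cannot expect the reverse shadows to exhaust $\rand\setminus\{\eta\}$. The paper handles this differently: instead of exhausting the boundary, Proposition~\ref{smallshadowsarelarge} uses the north--south dynamics of the rank one isometries $h$ and $g$ to produce a \emph{fixed} open set $O\subset\rand$ meeting $\Lim$ such that (after passing to one of two translates) $O$ is contained in \emph{every} reverse shadow $\pr_{\gamma^{-1}\xo}(B_\xo(c))$ for $c$ large. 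One then concludes $\mu_\xo(O\cap A^c)=0$ and invokes minimality of $\Lim$ (Lemma~\ref{openposmass}) to deduce $\mu_\xo(A^c)=0$.

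A second, smaller issue: you appeal to non-atomicity of $\mu_\xo$ at the accumulation point $\eta=\lim\gamma_n^{-1}\xo$. The paper only establishes that \emph{radial} limit points are not atoms (Proposition~\ref{atomicpart}); there is no reason $\eta$ should be radial, so this step is not justified as stated. The paper's route via minimality avoids any non-atomicity input altogether. Once you replace your visibility-plus-non-atomicity endgame by the combination of Proposition~\ref{smallshadowsarelarge} and Lemma~\ref{openposmass}, your argument becomes the paper's proof.
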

Finally, according to Proposition~\ref{atomicpart}  the atomic part of a  $\delta$-dimensional  $\Gamma$-invariant conformal density $\mu$ stays outside the radial limit set.

\subsection{Structure of the proof of HTS dichotomy and comments}

Our debt to Roblin's synthesis on HTS dichotomy (see Theorem 1.7 in~\cite{MR2057305}) appears clearly along the lines below.
However, there are  specific geometric arguments due to the rank one hypothesis which allows for   
rather complicated geometric properties of $M$, namely the existence of 
immersed totally geodesic flat or higher rank symmetric spaces (see for instance \cite{MR1132759}, \cite{MR908215},\cite{MR953675},\cite{MR994382} and \cite{MR1050413}).
We note that the additional hypothesis that $\Gamma$ is torsion free  is not relevant 
in the following so that our results apply as well to orbifolds.

For clarity, we  give at this point a brief description of the structure of the proof and precise  steps where new arguments appear to be necessary:
\begin{enumerate}
\item[{\bf 1)}] The shadow lemma for conformal densities (see Section~\ref{shadlem}) is one key geometric argument throughout the proof and needs to be established here for 
generalized shadows with two parameters. 
\item[{\bf 2)}]  It is almost tautological that conservativity of the geodesic flow with respect to $m_{\Gamma}$ is equivalent for the radial limit set $\radlim$ 
of $\Gamma$  
to be of full $\mu_o$-measure. Proposition~\ref{ergodicity} says that $\mu_o(\radlim)$ is either zero or one, and it is an expression of one part of the dichotomy. Its proof 
relies on the above mentioned shadow lemma so that there is no difference in essence from Roblin's  proof. 
\item[{\bf 3)}] Proposition~\ref{divseries}, which is the difficult part in the equivalence between the divergence of the group $\Gamma$ and the conservativity of the geodesic flow requires all the rank one machinery,  in particular the crucial  Proposition~\ref{lrcinproduct}. It is the first ingredient in the proof where the existence of a strong rank one closed geodesic is required. 
\item[{\bf 4)}] The last step is Proposition~\ref{conservative} which asserts that conservativity of the geodesic flow implies ergodicity. Here again we  believe that this proposition will not be true if we assume only the existence of a weak rank one closed geodesic in $M$. We note that the proof is partly based on previous geometric 
work by G. Knieper (\cite{MR1465601} and \cite{MR1652924}). 
\item[{\bf 5)}] Dissipativity of $(\partial^2X,\Gamma,(\mu_o\otimes\mu_o)\ein_{\partial^2X})$ when $\mu_o(\radlim)=0$ is obtained by a variation of an argument of Sullivan (see Lemma~\ref{muradlim0}) only in dimension two.
\item[{\bf 6)}] Simultaneous ergodicity of $(\partial^2\XX,\Gamma,(\mu_o\otimes\mu_o)\ein_{\partial^2\XX})$ and  of the geodesic flow is an evidence.
\end{enumerate}

The paper is organized as follows: In Section~\ref{prelim} we fix some notations and recall basic facts about Hadamard manifolds and discrete groups of
isometries which contain a (weak) rank one element.
In Section~\ref{geomesti} we give some important geometric estimates needed for our generalization of the so-called shadow lemma \cite[Theorem 3.6]{MR2290453}. 
This generalization is stated and proved as Theorem~\ref{shadowlemma} in Section~\ref{shadlem}; it gives an idea of the local behavior of conformal densities and is central in the proof of 
most of our results. In Section~\ref{propradlimset} we introduce the radial limit set of a discrete group of isometries and prove several measure theoretic properties of this set, in particular
Theorem~\ref{main2} and Proposition~\ref{atomicpart}.  
Section~\ref{BMmeasure} recalls the construction of the geodesic flow invariant Patterson-Sullivan measure on the unit tangent bundle of the quotient 
-- whose construction appears in \cite{MR1293874} and \cite{MR1652924}) -- and deals with ergodic properties of the corresponding dynamical system. 
The main result is 
Proposition~\ref{divseries} which states that for divergent groups $\Gamma$ -- under the hypothesis that $\XX/\Gamma$ contains a closed strong rank one geodesic -- the radial limit set has positive and hence
full measure with respect to $\mu_\xo$.

 In Section~\ref{HopfArgument} we complete the proof of the main theorem; the crucial step is Proposition~\ref{conservative} which asserts that conservativity of the geodesic  flow implies ergodicity.

Finally, in Section~\ref{surfaces}  we study the  action of $\Gamma$ on the space of geodesics $\joinrand$  of $\XX$ with respect to the  measure $ (\mu_\xo\otimes\mu_\xo)\ein_{ \partial^2X}$.  Unfortunately we cannot establish complete dissipativity for arbitrary convergent groups; 
it is possible that there remains a conservative part of positive measure in the set of end point pairs of geodesics which are not rank one. However, in dimension two we can exclude this possibility
and therefore get complete dissipativity. Summarizing all results previously obtained we formulate Theorem~\ref{HTS2}  for rank one surfaces which is the  exact analogon of Theorem~1.7 in \cite{MR2057305}.

\section*{Acknowledgements:}    The authors would like to thank Marc Peign{\'e} for many useful discussions. They would also like to thank E.~Gutkin  for pointing out reference \cite{MR1676950}  and for fruitful discussions. Finally, most part of  
the second author's work was completed when visiting {\rm ETHZ} in the fall 2010. The latter is particularly greatful to the institution for providing hospitality and nice 
working conditions; special thanks for that reasons are adressed to 
Marc Burger. 

\section{Preliminaries and notations}\label{prelim}

In this section we recall a few basic properties of Hadamard manifolds which possess
a geodesic without flat half plane. Most of the material can be found in \cite{MR656659} and \cite{MR1377265}.

Let $\XX$ be a complete simply connected Riemannian manifold of
non-positive sectional curvature. The geometric boundary $\rand$ of
$\XX$ is the set of equivalence classes of asymptotic geodesic
rays endowed with the cone topology (see e.g. Chapter~II in~\cite{MR823981}). This boundary is homeomorphic
to the unit tangent space of an arbitrary point in $\XX$ and
$\ganz:=\XX\cup \rand$ is homeomorphic to a closed ball.
Moreover, the iso\-metry group $\is(\XX)$ of $\XX$
has a natural action by homeomorphisms on the geometric boundary.

Recall that a basis for the topology on $\ganz$ is provided by the balls in $\XX$  and the set of  {\hl truncated cones}
\begin{equation}\label{trunccone}
{\mathcal C}_{x,\xi}^{\varepsilon} (T):=\{ z\in\ganz\colon d(x,z)>T,\, \angle_x(z,\xi)<\ep\},
\end{equation}
where $x\in\XX$, $\xi\in\rand$, $\ep>0$ and  $T>0$ are arbitrary.

All geodesics are assumed to have unit speed. For $x\in\XX$, $y\in\XX\setminus\{ x\}$ and
$\xi\in\rand$ we denote by $\sigma_{x,y}$ the unique 
geodesic joining the point $x=\sigma_{x,y}(0)$ to $y$, and by $\sigma_{x,\xi}$ the unique  geodesic emanating from $x$  \st the ray $\sigma_{x,\xi}(\RR_+)$ is in the class of $\xi$.

We say that two distinct points $\xi$, $\eta\in\rand$ can be joined by a
geodesic if there exists a geodesic $\sigma$ with end points
$\sigma(-\infty)=\xi$ and $\sigma(\infty)=\eta$, and we denote by $ \joinrand$  the set of pairs $(\xi,\eta)\in\rand\times\rand$ \st $\xi$ and $\eta$ can be joined by a geodesic. 
For $(\xi,\eta)\in\joinrand$ we denote by
\begin{equation}\label{joiningflat}
(\xi\eta):=\{ x\in\XX \colon \sigma_{x,\eta}(-\infty)=\xi\} 
\end{equation} 
the subset of $\XX$ obtained by the union of all geodesics  joining $\xi$ and  $\eta$. It is well-known (see e.g. Remark 1.11.7 in \cite{MR533654}) that $(\xi\eta)$ is closed and convex and hence a totally geodesic submanifold of $\XX$. We will describe these sets more precisely at the end of this section.

If $\XX$ is CAT$(-1)$, then any pair of distinct boundary 
points $(\xi,\eta)$ belongs to $\joinrand$, and $(\xi\eta)$ is the image of a geodesic which is unique up to reparametrization. In general, the set $\joinrand$ is much smaller compared to $\rand\times\rand$ minus the diagonal due to the possible existence of flat subspaces in $\XX$. 

Let $x, y\in \XX$, $\xi\in\rand$ and $\sigma$ a geodesic ray in the
class of $\xi$. We put 
\begin{equation}\label{buseman}
 \bs_{\xi}(x, y)\,:= \lim_{s\to\infty}\big(
d(x,\sigma(s))-d(y,\sigma(s))\big).
\end{equation}
This number exists, is independent of the chosen ray $\sigma$, and the
function
\[ \bs_{\xi}(\cdot , y):
 \XX \to  \RR,\quad 
x \mapsto \bs_{\xi}(x, y)\]
is called the {\hl Busemann function} centered at $\xi$ based at $y$ (see also Chapter~II in~\cite{MR823981}); its level sets are called  {\hl horospheres} centered at $\xi$. The Busemann function satisfies 
\begin{eqnarray}
|\bs_{\xi}(x, y)|&\le &d(x,y), \nonumber\\
\bs_{g\cdot\xi}(g\at x,g\at y) & =& \bs_{\xi}(x, y)\qquad\mbox{and}\nonumber\\
\bs_{\xi}(x, z)&=&\bs_{\xi}(x, y)+\bs_{\xi}(y,z)\label{cocycle}
%\nonumber
\end{eqnarray}
for all $x,y,z\in\XX$, $\xi\in\rand$ and $g\in\is(\XX)$.

If $(\xi, \eta)\in\joinrand$, $y\in\XX$, we define as in \cite{MR1341941} the {\hl Gromov product} of $\xi$ and $\eta$ with respect to $y$  by 
\begin{equation}\label{GromovProd}
\Gr_y(\xi,\eta):=\frac12\big(\bs_\xi(y,z)+\bs_\eta(y,z)\big),\quad\text{where}\quad z\in(\xi\eta).
\end{equation}
It follows immediately from the definition that the Gromov product is non-negative and independent of $z\in (\xi\eta)$.
Notice that in the case of surfaces the Gromov product can be extended to a larger set than $\joinrand$ (see Theorem~B in~\cite{MR2255528}), but for our purposes
 it will be sufficient to consider it on $\joinrand$.

We say that a geodesic $\sigma: \RR\to\XX$ bounds 
 a  {\hl flat strip of width $c> 0$} if there exists a convex subset $i([0,c]\times \RR)$ in $\XX$ isometric to $[0, c]\times\RR$ \st $\sigma(t)=i(0,t)$ for all $t\in \RR$.
A geodesic $\sigma:\RR\to\XX$ is called {\hl weak rank one} or simply {\hl rank one} if  $\sigma$ does not bound a flat strip of infinite width. In this case the number
\begin{equation}\label{widthdef}
\width(\sigma):= \sup\{c> 0\colon \sigma\ \mbox{bounds a flat strip of width}\  c\}
\end{equation}
is called the {\hl width} of $\sigma$. If $\sigma$ does not bound a flat strip of positive width, then we set $\width(\sigma)=0$. 
Furthermore,  we will say that a geodesic $\sigma$ is  {\hl strong rank one}  if $\sigma$ does not admit a perpendicular parallel  Jacobi field.  

\begin{remark}\  {\rm It is clear that a strong rank one geodesic cannot bound a flat strip of positive width. However, the example of a surface with negative Gaussian curvature except along a simple closed geodesic  where the curvature  vanishes shows that a geodesic can be weak rank one of width zero,  but still admitting a perpendicular parallel Jacobi field. }
\end{remark}

The following important lemma states that even though we cannot join any two distinct points in the geometric boundary of $\XX$, given a rank one geodesic we can at least join all points in a neighborhood of its end points. More precisely, we have the following result which is a reformulation of  Lemma III.3.1 in \cite{MR1377265} (see also Lemma~2.1 in \cite{MR1652924}):
%The following lemma is  Lemma~3.4 in \cite{MR1465601}, a direct consequence of Lemma~2.1 in \cite{MR656659} and its proof.
\begin{lemma}[Ballmann]\label{joinrankone} \ 
Let $\sigma:\RR\to\XX$ be a rank one geodesic and $c>\width(\sigma)$. 
Then there exist open disjoint neighborhoods $U$ of $\sigma(-\infty)$ and $V$ of $\sigma(\infty)$ in $\ganz$ with the following properties: 
If $\xi\in U$ and $\eta \in V$ then there exists a rank one geodesic joining $\xi$ and $\eta$. For any such geodesic $\sigma'$ we have $d(\sigma'(\RR), \sigma(0))< c$ and  $\width(\sigma')\le 2c$.
Furthermore, if $\sigma$ is strong rank one, then every geodesic joining a pair of points in $U$ and $V$ is strong rank one.
\end{lemma}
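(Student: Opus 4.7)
The statement is a reformulation of Ballmann's Lemma~III.3.1 in \cite{MR1377265} (compare also Lemma~2.1 in \cite{MR1652924}), so the plan is to follow those arguments for the existence of the joining geodesics, the distance bound $d(\sigma'(\RR),\sigma(0))<c$, and the width bound $\omega(\sigma')\le 2c$, and then to add a short closed-subset argument in the space of geodesics to obtain the last sentence about preservation of the strong rank one property.

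For the existence and the two bounds I would argue by contradiction: suppose no such $U$, $V$ exist and pick sequences $\xi_n\to\sigma(-\infty)$, $\eta_n\to\sigma(\infty)$ in the cone topology for which no joining geodesic of the required type is available. Approximate $\xi_n,\eta_n$ by interior points $x_n$ on the ray from $\sigma(0)$ to $\xi_n$ (and $y_n$ analogously), and form the geodesic segments $\tau_n$ from $x_n$ to $y_n$, reparametrized so that $\tau_n(0)$ realizes $d(\sigma(0),\tau_n)$. Convexity of the distance function on the Hadamard manifold $\XX$ combined with the cone topology forces $d(\tau_n(0),\sigma(0))$ to stay bounded, since otherwise $\tau_n$ would eventually miss the truncated cones ${\mathcal C}_{\sigma(0),\sigma(\pm\infty)}^{\varepsilon}(T)$ for small $\varepsilon$ and large $T$. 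Arzel\`a--Ascoli then yields a locally uniform limit $\tau:\RR\to\XX$ joining $\sigma(\pm\infty)$, which by the flat strip lemma lies in the maximal flat strip around $\sigma$, of width $\omega(\sigma)<c$. Shrinking $U,V$ suitably then guarantees that every joining geodesic $\sigma'$ of $\xi\in U$, $\eta\in V$ satisfies $d(\sigma'(\RR),\sigma(0))<c$. The width bound follows because a flat strip around $\sigma'$ of width exceeding $2c$, together with $d(\sigma'(\RR),\sigma(0))<c$, would allow a flat quadrilateral extension producing a flat strip around $\sigma$ of width strictly greater than $\omega(\sigma)$, contradicting the definition of $\omega(\sigma)$.

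For the strong rank one preservation I would again argue by contradiction: assume $\sigma$ is strong rank one, so in particular $\omega(\sigma)=0$, but there exist $(\xi_n,\eta_n)\to(\sigma(-\infty),\sigma(\infty))$ in $U\times V$ with joining geodesics $\sigma_n$ carrying perpendicular parallel Jacobi fields $J_n$ normalized by $|J_n|\equiv 1$. After the same reparametrization, a subsequence of $\sigma_n$ converges locally uniformly to a geodesic $\tau$ joining $\sigma(\pm\infty)$; since $\omega(\sigma)=0$ collapses the flat strip, $\tau$ coincides with $\sigma$ up to reparametrization. The vectors $J_n(0)$ lie in a compact subset of the tangent bundle, so a subsequence converges to some $v$ with $|v|=1$ and $v\perp\dot\sigma(0)$ (by passage to the limit in $J_n(0)\perp\dot\sigma_n(0)$). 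Smoothness of the Riemannian metric gives continuity of the Jacobi equation in the underlying geodesic, so the parallel Jacobi field $J$ along $\sigma$ with $J(0)=v$ is the locally uniform limit of the $J_n$, is perpendicular to $\dot\sigma$, and is non-zero, contradicting the strong rank one hypothesis on $\sigma$.

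The main obstacle will be to align the reparametrizations of $\sigma_n$ with $\sigma$ accurately enough for the limiting Jacobi field to live along $\sigma$ itself rather than along a parallel geodesic. The hypothesis $\omega(\sigma)=0$ in the strong rank one case collapses the potential flat strip and resolves this issue, but keeping track of the approximations requires some care. All remaining ingredients---convexity of distance functions, the flat strip lemma, and compactness of parallel Jacobi fields with unit initial data---are standard on Hadamard manifolds.
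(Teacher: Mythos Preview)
The paper does not supply its own proof of this lemma; it merely records it as a reformulation of Lemma~III.3.1 in \cite{MR1377265} (see also Lemma~2.1 in \cite{MR1652924}) and relies on those sources. Your sketch therefore goes beyond what the paper does, and it correctly follows the standard Ballmann argument for existence and the distance bound, together with a sound closedness-of-rank argument (limits of perpendicular parallel Jacobi fields) for the strong rank one assertion.

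One step needs repair: your justification of the width bound $\omega(\sigma')\le 2c$ via a ``flat quadrilateral extension producing a flat strip around $\sigma$'' cannot work as a direct implication for a fixed $\sigma'$, since $\sigma'$ and $\sigma$ have distinct endpoints at infinity and a flat strip bounded by $\sigma'$ consists entirely of geodesics asymptotic to $\xi,\eta$, none of which is $\sigma$. The argument must, like the distance step, proceed by contradiction with sequences: if for every shrinking pair of neighborhoods one could find $(\xi_n,\eta_n)$ with a joining geodesic $\sigma_n'$ satisfying $\omega(\sigma_n')>2c$, then (using the already-established bound $d(\sigma_n'(\RR),\sigma(0))<c$) the flat strips of width $2c$ bounded by the $\sigma_n'$ sub-converge to a flat strip of width $2c$ bounded by a limit geodesic $\tau$ with $\tau(\pm\infty)=\sigma(\pm\infty)$; since $d(\tau,\sigma)\le\omega(\sigma)<c$, this forces $\sigma$ itself to bound a flat strip of width exceeding $c>\omega(\sigma)$, a contradiction. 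With the width step recast as a limiting argument in this way, your outline is sound.
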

\begin{remark}\label{zerowidthnotregular}\ {\rm It may happen that a family of geodesics $\sigma_n$ of width $\width(\sigma_n)\searrow 0$ converges to a geodesic $\sigma$ of width zero which 
-- by the last statement in Lemma~\ref{joinrankone} -- is not strong rank one. In particular, if $\sigma$ is a weak rank one geodesic of width zero which is not strong rank one, then the width of every geodesic in a neighborhood of $\sigma$ may be strictly positive. }
\end{remark}

\begin{definition}\ 
An isometry $\gamma\neq\id$ of $\XX$ is called {\hd axial}, if there exists a constant
$l=l(\gamma)>0$ and a geodesic $\sigma$ \st
$\gamma(\sigma(t))=\sigma(t+l)$ for all $t\in\RR$. We call
$l(\gamma)$ the {\hd translation length} of $\gamma$, and $\sigma$
an {\hd axis} of $\gamma$. The boundary point 
$\gamma^+:=\sigma(\infty)$ (which is actually independent of the chosen axis $\sigma$) is called the {\hd attractive fixed
point}, and $\gamma^-:=\sigma(-\infty)$ the {\hd repulsive fixed
point} of $\gamma$. For $\gamma\in\is(\XX)$ axial we further put
\mbox{$\Ax(\gamma):=\{ x\in\XX\colon  d(x,\gamma x)=l(\gamma)\}$}.
\end{definition}
We remark that $\Ax(\gamma)=(\gamma^-\gamma^+)$ consists of  the union of parallel geodesics
translated by $\gamma$, % and is the image of an isometric embedding of $C\times \RR$ where $C$ is a convex set of the euclidian space $\RR^{k}$ with $k\leq \dim(X)-1 $. 
and 
$\overline{\Ax(\gamma)}\cap\rand$ is exactly the set of fixed points of
$\gamma$. The following particular kind of axial isometries will play a crucial role in the sequel.

\begin{definition}\label{hypaxiso}\ 
An isometry $h$ of $\XX$ is called {\hd weak rank one} or simply {\hd rank one} (respectively {\hl strong rank one}) if $h$ is axial and possesses
a weak (respectively strong) rank one axis. Its {\hd width} $\width(h)$  is defined by
 $$\width(h):=\sup\{ d(x, \sigma_{y,h^+}(\RR)) \colon x, y\in\Ax(h)\}.$$
\end{definition}
We remark that the 
set of fixed points of a rank one isometry $h$ consists of precisely the two points $h^+$ and $h^-$ and that every axial isometry commuting with $h$
possesses the same set of invariant geodesics as $h$. Furthermore, if $\sigma\subset\Ax(h)$ is an axis of $h$ then $\width(\sigma)\le \width(h)\le 2 \width(\sigma)$. 

The following important lemma describes the north-south dynamics of rank one isometries:
\begin{lemma}\label{dynrankone}(\cite{MR1377265}, Lemma III.3.3)\ 
\ Let $h$ be a rank one isometry. Then
\begin{enumerate}
\item[(a)]  every point $\xi\in\rand\setminus\{h^+\}$ can be joined
to $h^+$ by a geodesic, and all these geodesics are  rank one, 
\item[(b)] given neighborhoods $U$ of $h^-$ and $V$ of $h^+$ in $\ganz$ 
there exists $N\in\NN$ \st\\
 $h^{-n}(\ganz\setminus V)\subset U$ and
$h^{n}(\ganz\setminus U)\subset V$ for all $n\ge N$.
\end{enumerate}
\end{lemma}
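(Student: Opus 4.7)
Plan: The two statements are closely linked; I would prove (b) first and derive (a) from it using Lemma~\ref{joinrankone}. Let $\sigma$ be a (weak) rank-one axis of $h$, with $x_0=\sigma(0)$ and translation length $l=l(h)>0$, so $h^n x_0=\sigma(nl)$. Fix $c>\omega(\sigma)$ and apply Lemma~\ref{joinrankone} to $\sigma$, obtaining open neighborhoods $U_0$ of $h^-$ and $V_0$ of $h^+$ in $\ganz$ such that any pair $(\xi',\eta')\in U_0\times V_0$ is joined by a rank-one geodesic of width at most $2c$ passing within distance $c$ of $x_0$.

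The core of (b) is the pointwise claim that $h^n\xi\to h^+$ in $\ganz$ for every $\xi\in\ganz\setminus\{h^-\}$; uniform convergence on compact subsets of $\ganz\setminus\{h^-\}$ (in particular on $\ganz\setminus U$ for any neighborhood $U$ of $h^-$) follows by running the same argument on sequences $\xi_k\to\xi$, and the symmetric inclusion is handled by applying the argument to $h^{-1}$. I would use the identity $\angle_{x_0}(h^n\xi,h^+)=\angle_{\sigma(-nl)}(\xi,h^+)$, valid because $h^{-n}$ is an isometry fixing $h^+$, to reduce the claim to $\angle_{\sigma(-nl)}(\xi,h^+)\to 0$. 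I then argue by contradiction: if there exist $\xi\neq h^-$, $\epsilon>0$, and $n_k\to\infty$ with $\angle_{\sigma(-n_k l)}(\xi,h^+)\geq\epsilon$, the rays $\rho_k$ from $\sigma(-n_k l)$ to $\xi$ split off from $\sigma$ at persistent angle. An Arzel\`a--Ascoli extraction (after reparametrizing so each $\rho_k$ meets a fixed ball) produces a bi-infinite limit geodesic $\tau$ with $\tau(-\infty)=h^-$ and $\tau(+\infty)=\xi$. Iterating $h$ on $\tau$ yields geodesics $h^m\tau$ from $h^-$ to $h^m\xi$, and a further extraction gives a limit $\eta\neq h^+$ of $h^{m_j}\xi$ together with a bi-infinite geodesic $\tau^{\star}$ from $h^-$ to $\eta$. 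Then $\tau^{\star}$ and $\sigma$ share the endpoint $h^-$ yet maintain an $\epsilon$-gap asymptotically, which contradicts the bounded-width conclusion of Lemma~\ref{joinrankone} applied to suitable approximating pairs in $U_0\times V_0$.

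Deduction of (a): Given $\xi\in\rand\setminus\{h^+\}$, by (b) applied to $h^{-1}$ we have $h^{-n}\xi\in U_0$ for $n$ large. Lemma~\ref{joinrankone} supplies a rank-one geodesic $\tau$ joining $h^{-n}\xi$ and $h^+\in V_0$, so $h^n\tau$ joins $\xi$ to $h^+$ and remains rank-one since isometries preserve width. For the ``all such geodesics are rank-one'' assertion, any geodesic $\tau'$ joining $\xi$ and $h^+$ yields the translate $h^{-n}\tau'$ joining $h^{-n}\xi\in U_0$ and $h^+\in V_0$; the final clause of Lemma~\ref{joinrankone} forces $\width(h^{-n}\tau')\leq 2c$, hence $\width(\tau')\leq 2c$, and $\tau'$ is rank-one.

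Main obstacle: the geometric contradiction in the pointwise convergence claim of (b). Translating a persistent $\epsilon$-angle deviation at $\sigma(-n_k l)$ into a genuine violation of the bounded-width structure of $\sigma$ requires careful use of the convexity of the distance function along $\sigma$ and of the cocycle property of Busemann functions. This is the only step where the rank-one hypothesis is essential: for a general axial isometry, such as a translation on a Euclidean flat, the north--south dynamics fails outright.
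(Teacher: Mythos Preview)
The paper does not give its own proof of this lemma; it is quoted verbatim from Ballmann's monograph (Lemma~III.3.3 in \cite{MR1377265}) and used as a black box throughout. So there is no in-paper argument to compare against.

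On its own merits, your outline has the right architecture---reduce (a) to (b) via Lemma~\ref{joinrankone}, and reduce (b) to the pointwise claim $h^n\xi\to h^+$ via the angle identity $\angle_{x_0}(h^n\xi,h^+)=\angle_{\sigma(-nl)}(\xi,h^+)$---but the contradiction argument is not actually carried out. Two concrete gaps. First, the Arzel\`a--Ascoli extraction of a bi-infinite geodesic $\tau$ from $h^-$ to $\xi$ presupposes that the rays $\rho_k$ from $\sigma(-n_kl)$ to $\xi$ all meet a fixed ball; for $\xi\in\rand$ this is false in general (take $\XX=\RR^2$, $\sigma$ the $x$-axis, $\xi$ the boundary point in the $y$-direction), so the rank-one hypothesis must already enter here, and you have not said how. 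Second, even granting $\tau$ and $\tau^\star$, the clause ``contradicts the bounded-width conclusion of Lemma~\ref{joinrankone} applied to suitable approximating pairs in $U_0\times V_0$'' is not an argument: the endpoints of $\tau^\star$ are $h^-$ and $\eta$, which do not form a pair in $U_0\times V_0$, and an ``$\epsilon$-gap'' between two rays asymptotic to $h^-$ is not by itself a flat strip. The standard route (as in Ballmann) is rather to show directly that failure of the convergence forces $\sigma$ to bound flat strips of arbitrary width---via the flat strip theorem applied to $\sigma$ and suitable limits of the $h^{-n}$-translates---contradicting the rank-one hypothesis on $\sigma$ itself, without circling back through Lemma~\ref{joinrankone}. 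You flag this step as the main obstacle, and rightly so; the proposal as written does not get past it.
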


For $x\in \XX$ and $r>0$ we denote  
$B_x(r)\subset\XX$ the open ball of radius $r$ centered at $x\in\XX$. Set
$D:=\{(x,x)\colon x\in\XX\}$  and consider the continuous projection
\begin{equation}\label{pr}
\pr: \ganz\times\XX\setminus D\to\rand,\quad (z,x)\mapsto 
\sigma_{x,z}(-\infty).
%\begin{array}{rcl} \pr:\quad \ganz\times\XX\setminus D &\to & \rand\\
%(z,x)\quad &\mapsto &
%\sigma_{x,z}(-\infty).\end{array}
\end{equation} 
If $B\subset\XX$, $y\in \ganz\setminus B$, we further set
$\pr_y(B):=\{\pr(y,x)\colon x\in B\}$.

In the sequel it will be convenient to deal with the following sets introduced (up to  small details) by  T.~Roblin (\cite{MR2057305}): 
For
$r>0$, $c>0$  and $x,y\in\XX$ %with $d(x,y)>r+c$
 we set
\begin{align} 
{\mathcal O}^+_{r,c}(x,y) &:= \{\xi\in\rand \colon \exists\, z\in B_x(r)\ \st
\sigma_{z,\xi}(\RR_+)\cap B_y(c)\neq\emptyset\},\nonumber \\
{\mathcal O}^-_{r,c}(x,y) &:= \{\xi\in\rand\colon \forall\, z\in B_x(r)\
\mbox{we have}\ \sigma_{z,\xi}(\RR_+)\cap B_y(c)\neq\emptyset\},\nonumber\\
{\mathcal L}_{r,c}(x,y) &:=  \{(\xi,\eta)\in\rand\times\rand \colon \exists\, x'\in
B_x(r)\ \exists\, y'\in B_y(c)\ \st \nonumber \\
&\hspace*{4.8cm}\sigma_{x',y'}(-\infty)=\xi,\,
\sigma_{x',y'}(+\infty)=\eta\}.\label{Lrc}
\end{align}
It is clear from the definitions that %for all $r>0$, $c>0$ 
\begin{equation}\label{shadrelation}
{\mathcal O}^-_{r,c}(x,y)\subseteq \pr_x(B_y(c))\subseteq {\mathcal O}^+_{r,c}(x,y).\end{equation}
%\,\qquad\mbox{and}\quad SB_x(r)\subset K(x,r)\subset SB_x(\frac{3}2r).$$
Moreover, ${\mathcal O}^-_{r,c}(x,y)$ is non-increasing in $r$ and non-decreasing in $c$ while  ${\mathcal L}_{r,c}(x,y) $
is non-decreasing in both $r$ and $c$.

The following properties are almost tautological:\\
If $r,r',c,c'>0$, $x'\in B_x(r')$ and $y'\in B_y(c')$ with $d(x',y')>r+r'+c+c'$, then
\begin{align}
{\mathcal O}_{r+r',c}^-(x',y)&\subseteq  {\mathcal O}_{r,c}^-(x,y)\subseteq {\mathcal O}_{r,c+c'}^-(x,y'),\label{o-inclus}\\
{\mathcal L}_{r,c}(x,y)&\subseteq  {\mathcal L}_{r+r',c+c'}(x',y'),\label{lrcinclus}\\
{\mathcal L}_{r,c}(x,y) &\supseteq \{(\xi,\eta)\in \joinrand\colon  \eta\in {\mathcal O}^-_{r,c}(x,y),\,\xi\in\pr_\eta(B_x(r))\} .\label{princlus} 
\end{align}

%%ERST IN ABSCHNITT 6?
Let $S\XX$ denote the unit tangent bundle of $\XX$ and $p:S\XX\to\XX$ the foot point projection. For $B\subset \XX$ we define 
\[S B:=\{ v\in S\XX\colon pv\in B\} \subset S\XX,\]
i.e. $SB$ is the set of vectors in $S\XX$ with foot point in $B$. 
 Let  $(g^t)_{t\in\RR}$  denote the geodesic flow on $S\XX$. Each $v\in S\XX$ determines a unique geodesic $\sigma_v$ via the assignment
\[ \sigma(t):= p(g^t v),\quad t\in\RR;\]
its extremities $v^-:=\sigma_v(-\infty)$ and $v^+:=\sigma_v(-\infty)$ are called the negative and positive end point of $v$. In particular, we can define the end point projection 
\[ \Pi:S\XX\to \joinrand,\quad v\mapsto (v^-,v^+)\]
which obviously is  invariant under the geodesic flow. Moreover, for $(\xi,\eta)\in\joinrand$ the set of preimages of $\Pi$ consists of all vectors $v\in S\XX$  with end points $v^-=\xi$ and $v^+=\eta$; in particular we have
\[ p (\Pi^{-1}(\xi,\eta)) =(\xi\eta).\]
It is well-known  (see for instance Lemma~2.4 in \cite{MR823981}) that the totally geodesic submanifold $(\xi\eta)\subset \XX$ is isometric to the  product $C_{(\xi\eta)}\times\RR$, where  $C_{(\xi\eta)}$ is a  closed convex subset of $\XX$ which we will describe more 
%$C_{(\xi\eta)}$ in the decomposition  \[(\xi\eta)\simeq C_{(\xi\eta)}\times\RR\] more 
precisely in the sequel. For $t\in\RR$  and $v\in \Pi^{-1}(\xi,\eta)$  we denote
\begin{equation}\label{Cxieta} H^t(v):=\{ x\in (v^-v^+)\colon  \bs_{v^+}(pv,x)=\bs_{v^-}(x,pv)=t\}
\end{equation}
the intersection of the  submanifold $(\xi\eta)$ with the horosphere centered at $\xi=v^+$ (or equivalently at $\eta=v^-$) passing through the foot
point of $g^tv$; by the cocycle identity for the Busemann function we have \[H^t(v)= H^0(g^tv).\] 
Notice that if $(\xi\eta)$ is the image of a unique geodesic up to reparametrization, then 
$H^t(v)$ consists of exactly one point, namely the foot point of $g^tv$.  In general, the geodesics $\sigma_v(\RR)$ and $\sigma_w(\RR)$ determined by two vectors $v,w\in \Pi^{-1}(\xi,\eta)$ can be different. However, there always exists a unique $\tau\in\RR$ \st the sets $H^\tau(v)=H^0(g^\tau v)$ and $H^0(w)$ coincide. In particular, if $v\in \Pi^{-1}(\xi,\eta)$ then we have
\[ (\xi\eta)=\{ H^t(v)\colon  t\in\RR\}\] and hence $C_{(\xi\eta)}$ can be identified with  the set
$ H^0(v)$ (or equally well with the set $H^t(v)$ for some fixed $t\in\RR$). 
%is precisely  the isometric image of $C_{(\xi\eta)}$ passing through $pv$. 
%In the sequel we will sometimes write $C_v$ instead of $C_{(v^-v^+)}$. 
%Moreover, if $k=k(\xi\eta)$ denotes the dimension of the closed convex submanifold  $C_{(\xi\eta)}$ (with the convention $k=0$ if $C_{(\xi\eta)}$ reduces to a point or 
%equivalently, if $(\xi\eta)$ consists of a unique geodesic up to reparametrization), 
%then volume $\vol_{k+1}$ of any subset of the set $(\xi\eta)$   is well-defined. In the sequel we will often omit the subscript  and only write $\vol$, 
%implicitely assuming that the volume is taken in the Euclidean space of the appropriate dimension.

For a point $x\in \XX$ and $(\xi,\eta)\in\joinrand$ we further denote 
\begin{equation}\label{projxxieta} v(x;\xi,\eta)\end{equation} the unique element $v\in S\XX$ whose foot point is the orthogonal projection of $x$ to the totally geodesic submanifold $(\xi\eta)$ and such that  $\Pi(v)=(\xi,\eta)$.

\section{Geometric estimates}\label{geomesti}

Let $\XX$ be a Hadamard manifold and $\Gamma\subset\is(\XX)$ a discrete group.  The {\hl geometric limit set} $\Lim$ of $\Gamma$  is defined by
%\begin{equation}\label{limsetdef}
$\Lim:=\overline{\Gamma\cdot x}\cap\rand,$ 
where $x\in\XX$ is an arbitrary point.  A discrete group $\Gamma\subset \is(\XX)$ is said to be {\hl non-elementary} if the cardinality of $\Lim$ is infinite.

From here on we will require that $\Gamma\subset\is(\XX)$ is  a non-elementary discrete group which contains a rank one isometry $h$ of finite width $\width(h)\geq 0$, and we fix a point $\xo\in\Ax(h)$. 
As $\Gamma$ is non-elementary, there exists one element (actually an infinite number) not commuting with $h$ so that conjugation by such an element gives another rank one isometry of the same 
width whose fixed points are disjoint from those of $h$.
 Furthermore it is  well-known (see e.g. \cite[Proposition~2.8]{MR656659}) that the geometric limit set of $\Gamma$ is minimal, i.e. $\Lim=\overline{\Gamma\cdot\xi}$ for any $\xi\in\Lim$. 
This implies (\cite[Lemma~2.8]{MR2290453}) that for any open subset $O\subset\rand$ with $O\cap\Lim\ne \emptyset$ there exists a finite set $\Lambda \subset \Gamma$ depending on $O$ \st 
\begin{equation}\label{coverLimset}
 \Lim\subseteq \bigcup_{\beta\in\Lambda} \beta O.
\end{equation}

Roughly speaking, the following fact asserts that for suitable $r$ and $c$ the sets 
$ {\mathcal L}_{r,c}(\xo,\gamma \xo)$ are big enough for all but a finite number of elements in $\Gamma$. It will be crucial in the proof of Proposition~\ref{divseries}.

\begin{proposition}\label{lrcinproduct}\
Let $h\in\Gamma$ be the rank one isometry from above, $r_0>\width(h)$ and $U,V\subset\ganz$ the  neighborhoods of $h^-$, $h^+$ provided by Lemma~\ref{joinrankone} for $r_0$. Then there exists a finite set
$\Lambda\subset\Gamma$ such that the following holds: 

For any $c>0$ there exists $R\gg 1$ \st if 
$\gamma \in \Gamma$ satisfies $d(\xo,\gamma \xo)>R$, then for some  $\beta\in\Lambda$ we have
$$ {\mathcal L}_{r,c}(\xo,\beta\gamma \xo)\cap \big(U\times V\big)\supseteq (U \cap \rand) \times {\mathcal O}_{r,c}^-(\xo,\beta\gamma\xo)\qquad\mbox{for all }\ r\ge r_0.$$
\end{proposition}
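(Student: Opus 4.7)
The plan is to split the proof into three steps: first construct $\Lambda$ from the minimality of $\Lim$ so that $\beta\gamma\xo$ lies near $h^+$; then translate this placement into the shadow containment ${\mathcal O}^-_{r,c}(\xo,\beta\gamma\xo)\subset V$; finally invoke Lemma~\ref{joinrankone} to manufacture the joining geodesic required for ${\mathcal L}_{r,c}(\xo,\beta\gamma\xo)$.

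\emph{Step 1 (construction of $\Lambda$).} I will pick $\varepsilon>0$ small enough that $\{\eta\in\rand\colon\angle_\xo(\eta,h^+)<2\varepsilon\}\subset V$, and introduce the truncated cone $V_1:={\mathcal C}_{\xo,h^+}^{\varepsilon}(T_0)$ for some $T_0>0$; this is open in $\ganz$ with $V_1\cap\rand\subset V$. Since $h^+\in\Lim$, the minimality statement~(\ref{coverLimset}) produces a finite $\Lambda_0\subset\Gamma$ with $\Lim\subset\bigcup_{\beta_0\in\Lambda_0}\beta_0(V_1\cap\rand)$; I set $\Lambda:=\{\beta_0^{-1}\colon\beta_0\in\Lambda_0\}$, a finite set depending only on $h,U,V,r_0$ and not on $c$. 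The open set $\bigcup_{\beta_0\in\Lambda_0}\beta_0 V_1\subset\ganz$ contains $\Lim$, and since $\Gamma\xo$ can accumulate only on $\Lim$ (by compactness of $\ganz$), there exists $R_1>0$ such that $d(\xo,\gamma\xo)>R_1$ forces $\gamma\xo\in\bigcup_{\beta_0\in\Lambda_0}\beta_0 V_1$; equivalently, some $\beta\in\Lambda$ satisfies $\beta\gamma\xo\in V_1$, so $\angle_\xo(\beta\gamma\xo,h^+)<\varepsilon$ and $d(\xo,\beta\gamma\xo)>T_0$.

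\emph{Step 2 (shadow containment).} For $\eta\in{\mathcal O}^-_{r,c}(\xo,\beta\gamma\xo)$, specialising $z=\xo\in B_\xo(r)$ in the definition yields $p\in\sigma_{\xo,\eta}(\RR_+)\cap B_{\beta\gamma\xo}(c)$. CAT$(0)$ angle comparison in the triangle $\xo,p,\beta\gamma\xo$ gives $\angle_\xo(\eta,\beta\gamma\xo)=\angle_\xo(p,\beta\gamma\xo)\le \arcsin\bigl(c/d(\xo,\beta\gamma\xo)\bigr)$, so the triangle inequality for the angular metric at $\xo$ produces $\angle_\xo(\eta,h^+)<\varepsilon+\arcsin\bigl(c/d(\xo,\beta\gamma\xo)\bigr)$. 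Because $\Lambda$ is finite, $\max_{\beta\in\Lambda}d(\beta^{-1}\xo,\xo)$ is bounded, so enlarging $R\ge R_1$ (depending on $c$) forces $\arcsin(c/d(\xo,\beta\gamma\xo))<\varepsilon$ for every $\beta\in\Lambda$ and every $\gamma$ with $d(\xo,\gamma\xo)>R$; hence $\angle_\xo(\eta,h^+)<2\varepsilon$ and $\eta\in V$.

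\emph{Step 3 (joining geodesic).} For $\xi\in U\cap\rand$ and $\eta\in{\mathcal O}^-_{r,c}(\xo,\beta\gamma\xo)\subset V$, Lemma~\ref{joinrankone} -- applied to the axis of $h$ through $\xo$ with parameter $r_0>\width(h)$ -- supplies a rank one geodesic $\sigma'$ joining $\xi$ and $\eta$ with $d(\sigma'(\RR),\xo)<r_0$. I pick $x'\in\sigma'(\RR)\cap B_\xo(r_0)\subseteq B_\xo(r)$; the ${\mathcal O}^-$ condition then forces the ray $\sigma_{x',\eta}(\RR_+)\subset\sigma'(\RR)$ to meet $B_{\beta\gamma\xo}(c)$ at some $y'$, so $\sigma_{x',y'}$ is a subarc of $\sigma'$ with endpoints $(\xi,\eta)$, placing $(\xi,\eta)$ in ${\mathcal L}_{r,c}(\xo,\beta\gamma\xo)\cap(U\times V)$.

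The hard part will be the angle estimate in Step 2, which must be uniform in $r\ge r_0$ and in $\gamma$ while respecting the order of quantifiers: $\Lambda$ must be fixed from intrinsic data before $c$ enters, which is why the auxiliary truncated cone $V_1$ is essential -- it pins down the direction of $\beta\gamma\xo$ and its distance to $\xo$ in a single open set. A subtler point is the reliance on $\Gamma\xo$ accumulating only on $\Lim$, from which the uniform radius $R_1$ is extracted by compactness of $\ganz$.
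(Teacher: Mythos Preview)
Your proof is correct and follows essentially the same route as the paper's: choose a truncated cone at $h^+$ inside $V$, use minimality of $\Lim$ to produce the finite set $\Lambda$, show that ${\mathcal O}^-_{r,c}(\xo,\beta\gamma\xo)\subseteq V$ once $d(\xo,\gamma\xo)$ is large, and then apply Lemma~\ref{joinrankone} together with the defining property of ${\mathcal O}^-$ to exhibit the joining geodesic. The only cosmetic difference is that the paper phrases Step~2 as the containment $\pr_{\xo}(B_{\gamma\xo}(c+\rho))\subseteq\beta^{-1}\con$ (routing through $\pr_{\beta\xo}$ with a $\rho$-enlargement), whereas you carry out the equivalent CAT$(0)$ angle estimate explicitly at $\xo$.
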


 \begin{proof} Since $V\subset\ganz$ is an open neighborhood of $h^+$, there exists a truncated cone $\con:=\con_{\xo, h^+}^{\ep}(T)$ as in~(\ref{trunccone})  such that $\con\subseteq V$.  
Let $\Lambda\subset\Gamma$ be a finite and symmetric set such that $\displaystyle{\cup_{\beta\in\Lambda}}\beta \con$ is an open neighborhood of $L_{\Gamma}$ in $\ganz$ 
and set $\rho:=\max\{d(\xo,\beta\xo)\colon \beta\in\Lambda\}$.  We observe that for a fixed constant $c'>0$
\[ \inf\{\ep>0 \colon  \exists\, \xi\in\rand\ \exists\, T\gg 1 \quad \st \ B_{\gamma\xo}(c')\subset \con_{\xo, \xi}^{\ep}(T)\} \to 0\]
as $d(\xo,\gamma\xo)\to\infty$. Consequently, given $c>0$ we can find $R\gg 1$ depending on $c$ and the truncated cone $\con$ such that for any $\gamma\in\Gamma$ 
satisfying $d(\xo,\gamma\xo)>R$ there exists $\beta\in\Lambda$ with 
$B_{\gamma\xo}(c+\rho)\subseteq \beta^{-1}\con$ and $
\pr_{\xo}\big(B_{\gamma\xo}(c+\rho)\big)\subseteq \beta^{-1}\con. $
By construction, 
$$\pr_{\beta\xo}\big(B_{\beta\gamma\xo}(c+\rho)\big)=\beta\pr_{\xo}\big(B_{\gamma\xo}(c+\rho)\big)\subseteq \con\subseteq V,$$
and moreover we have for all $r>0$ 
$${\mathcal O}_{r,c}^-(\xo,\beta\gamma\xo)\subseteq \pr_{\xo}\big(B_{\beta\gamma\xo}(c)\big)
\subseteq \pr_{\beta\xo}\big(B_{\beta\gamma\xo}(c+\rho)\big)\subseteq V.$$
Consequently, if $r\ge r_0$ and  $(\xi, \eta)\in (U\cap\rand)\times {\mathcal O}_{r,c}^-(\xo,\beta\gamma\xo)$,
then $\eta\in V$; by choice of $U$ and $V$ the points $\xi$ and $\eta$ can be joined by a rank one geodesic $\sigma$ \st $d(\sigma(0),\xo)=d(\sigma(\RR), \xo)<r$. 
Since $\sigma(0)\in B_\xo(r)$ and $\eta=\sigma(\infty)\in {\mathcal O}_{r,c}^-(\xo,\beta\gamma\xo)$, the ray $\sigma(\RR^+)$ 
has non-trivial intersection with $B_{\beta\gamma\xo}(c)$, hence $(\xi,\eta)\in {\mathcal L}_{r,c}(\xo,\beta\gamma \xo).$
\end{proof}

From here on we fix a second rank one isometry $g\in\Gamma$  with fixed points distinct from those of $h$. We will need the following  result which is a direct consequence of the north-south dynamics property of rank-one isometries and the key  Lemma~\ref{joinrankone}. It roughly states that for given $r$, the shadows ${\mathcal O}_{r,c}^-(y,\xo)$ are uniformly big with respect to $y\in\XX$ for a suitable 
$c$  bigger than the width $\width(h)$ of $h$. It becomes apparent in the proof of this statement why we need to consider shadows depending on two parameters.
\begin{proposition}\label{smallshadowsarelarge}\ 
Given $r>0$ there exist an  open neighborhood $O\subset\rand$ of $h^+$, $M\in\NN$ 
 and $c_0>0$ such that for all $y\in \XX$ with $d(y,\xo)>r+c_0$
$$ h^M O\subseteq {\mathcal O}^-_{r,c_0}(y,\xo)\quad\mbox{or}\qquad\ g^Mh^M O\subseteq {\mathcal O}^-_{r,c_0}(y,\xo).$$
\end{proposition}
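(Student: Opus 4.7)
The plan is to combine Lemma~\ref{joinrankone} applied to both rank one isometries $h$ and $g$ with the north--south dynamics described by Lemma~\ref{dynrankone}, and then to propagate everything to the whole ball $B_y(r)$ by CAT$(0)$ convexity of the distance function.

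First I would fix a constant $c>\max(\width(h),\width(g))$ and apply Lemma~\ref{joinrankone} to a rank one axis of $h$ through $\xo$ (which exists because $\xo\in\Ax(h)$), obtaining disjoint open neighborhoods $U_h\subset\ganz$ of $h^-$ and $V_h\subset\ganz$ of $h^+$ such that every pair $(\eta,\xi)\in U_h\times V_h$ is joined by a rank one geodesic staying within distance $c$ of $\xo$. I would apply the same lemma to a rank one axis of $g$ through a chosen point $\xo_g\in\Ax(g)$, obtaining analogous neighborhoods $U_g$ of $g^-$ and $V_g$ of $g^+$ for which the joining geodesic stays within $c':=c+d(\xo,\xo_g)$ of $\xo$ after one triangle inequality.

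Next I would pick an open neighborhood $O$ of $h^+$ whose closure is disjoint from $\{h^-,g^-\}$. By Lemma~\ref{dynrankone} applied to $h$, the set $h^M O$ can be made to lie inside any prescribed neighborhood of $h^+$ by taking $M$ large; in particular I may force $h^M O\subseteq V_h$. Since $h^+\neq g^-$, the set $h^M O$ stays uniformly bounded away from $g^-$, so Lemma~\ref{dynrankone} applied to $g$ gives $g^M h^M O\subseteq V_g$ for $M$ further increased. Finally I would set $c_0:=c'+r+1$, where the $+r$ absorbs the CAT$(0)$ estimate $d(\xo,\sigma_{z,\xi})\le d(\xo,\sigma_{y,\xi})+d(y,z)$ valid for any $z\in B_y(r)$, and the $+1$ provides margin for the open condition on the neighborhoods.

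The proof then reduces to the following geometric dichotomy, which is the heart of the argument: for every $y\in\XX$ with $d(y,\xo)>r+c_0$, either every $\xi\in h^M O$ satisfies $\sigma_{y,\xi}(-\infty)\in U_h$, or every $\xi\in g^M h^M O$ satisfies $\sigma_{y,\xi}(-\infty)\in U_g$. Granting the dichotomy, Lemma~\ref{joinrankone} yields $d(\xo,\sigma_{y,\xi})<c$ in the first case and $d(\xo,\sigma_{y,\xi})<c'$ in the second; the CAT$(0)$ convexity estimate then gives $d(\xo,\sigma_{z,\xi})<c_0$ for every $z\in B_y(r)$, so $\sigma_{z,\xi}(\RR_+)$ meets $B_\xo(c_0)$, and the claimed inclusion into $\mathcal O^-_{r,c_0}(y,\xo)$ follows at once.

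The hard part will be establishing the dichotomy. My intuition is that as $M\to\infty$ and $O$ shrinks, the backward endpoint $\sigma_{y,\xi}(-\infty)$ becomes essentially independent of $\xi\in h^M O$ and concentrates near a single point of $\rand$ determined by $y$ and $h^+$; the same holds with $g^+$ and $g^M h^M O$. Since $h^\pm,g^\pm$ are four distinct points, a pigeonhole/north--south argument combining Lemma~\ref{joinrankone} and Lemma~\ref{dynrankone} for both $h$ and $g$ should force one of the two alternatives to hold for every sufficiently distant $y$: the possible asymptotic configurations of $y$ relative to the two axes are exhausted by the regions where Lemma~\ref{joinrankone} applies to either $h$ or $g$. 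Working out this case analysis in the rank one setting, where the joining geodesics are controlled but flats may interfere, is where I would expect the rank one hypothesis on $h$ and $g$ to enter essentially.
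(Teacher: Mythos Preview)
Your reduction to the ``dichotomy'' is where the argument breaks, and not merely because you leave it unproved: as stated, the dichotomy is false. Fix $M$, $O$, $U_h$, $U_g$ as you describe and let $y\to\eta\in\rand$ along a geodesic ray, with $\eta$ chosen outside $\overline{U_h}\cup\overline{U_g}\cup\{h^+,g^+\}$. For any $\xi$ in the compact sets $h^M\overline O$ or $g^Mh^M\overline O$ (both bounded away from $\eta$), the backward end point $\sigma_{y,\xi}(-\infty)$ converges to $\eta$ as $d(y,\xo)\to\infty$; hence for all sufficiently distant $y$ in that direction neither alternative of your dichotomy holds. The phenomenon you anticipate---that $\sigma_{y,\xi}(-\infty)$ concentrates near a point ``determined by $y$ and $h^+$''---is correct, but that point is essentially the asymptotic direction of $y$, not $h^-$ or $g^-$. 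So Lemma~\ref{joinrankone} cannot be invoked with the pair $(\sigma_{y,\xi}(-\infty),\xi)$.

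The paper's proof avoids this entirely by acting on $y$ rather than analyzing backward end points. One takes the four neighborhoods $U(h),V(h),U(g),V(g)$ from Lemma~\ref{joinrankone} to be pairwise disjoint and at mutual distance $>2r$ in $\ganz$, so the ball $B_y(r)$ meets at most one of them. If $B_y(r)\subset\ganz\setminus V(h)$, Lemma~\ref{dynrankone}(b) gives $h^{-M}B_y(r)\subset U(h)$; otherwise $B_y(r)\subset\ganz\setminus V(g)$, so $g^{-M}B_y(r)\subset U(g)\subset\ganz\setminus V(h)$ and then $h^{-M}g^{-M}B_y(r)\subset U(h)$. Either way some $\beta\in\{h^{-M},h^{-M}g^{-M}\}$ sends the whole ball $B_y(r)$ into $U(h)$, and now Lemma~\ref{joinrankone} applies directly to give $O:=V(h)\cap\rand\subseteq\mathcal O^-_{r,c}(\beta y,\xo)$. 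Translating back by $\beta^{-1}$ and enlarging $c$ to $c_0=c+d(\xo,\beta^{-1}\xo)$ via~(\ref{o-inclus}) yields $\beta^{-1}O\subseteq\mathcal O^-_{r,c_0}(y,\xo)$. The point is that the north--south dynamics is used to move $B_y(r)$ into $U(h)$, not to move $O$ into $V_h$; this is what makes the argument uniform in $y$.
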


\begin{proof} Fix $r>0$. For $c>\width(h)$ and $\gamma\in\{g,h\}$ we choose neighborhoods $U(\gamma)$ and $V(\gamma)$ according to Lemma~\ref{joinrankone}; upon taking smaller 
neighborhoods these can be assumed to be pairwise disjoint and at distance at least $2r$ from each other.  By Lemma~\ref{dynrankone} (b) 
there exists $M\in\NN$ such that for all $y\in\XX$ we have $B_{\beta y}(r)=\beta (B_y(r))\subset U(h)$ with $\beta=h^{-M}$ or $\beta=h^{-M}g^{-M}$. Consequently,
  Lemma~\ref{joinrankone} 
rephrases as 
$${\mathcal O}^-_{r,c}(\beta y, \xo)\supseteq O:=V(h)\cap\rand,$$
and
the conclusion now follows with
\[ c_0:=c+\max\{ d(\xo,h^{-M}\xo),d(\xo,h^{-M}g^{-M}\xo)\}\] from the obvious relation\\[2mm]
$\hspace*{35mm} \displaystyle \beta^{-1} {\mathcal O}^-_{r,c}(\beta y, \xo)= {\mathcal O}^-_{r,c}(y,\beta^{-1}\xo)\stackrel{(\ref{o-inclus})}{\subseteq} {\mathcal O}^-_{r,c_0}(y, \xo).$
\end{proof}

\section{The generalized shadow lemma for conformal densities} \label{shadlem}
Given $\delta\ge 0$, a $\delta$-dimensional  $\Gamma$-invariant  conformal density is a continuous map $\mu$ of $\XX$ into the cone of positive finite Borel
measures on $\rand$ \st $\mu_\xo:=\mu(\xo)$ is supported on the limit set $\Lim$, $\mu$ is $\Gamma$-equivariant (i.e. $\gamma_*\mu_x=\mu_{\gamma x}$ for 
all $\gamma\in\Gamma$, $x\in\XX$)\footnote{Here $\gamma_*\mu_x$ denotes the measure defined by $\gamma_*\mu_x(E)=\mu_x(\gamma^{-1}E)$ for any Borel set $E\subseteq\rand$.}  and 
\begin{equation}\label{conformality}
  \frac{\d \mu_x}{\d \mu_\xo}(\eta)=\e^{\delta
\bs_{\eta}(\xo,x)} \quad\mbox{for any}\ \,x\in\XX\ \text{and }\  \eta\in\supp(\mu_\xo).
\end{equation}

The existence of  a  $\delta$-dimensional   $\Gamma$-invariant conformal density for $\delta=\delta(\Gamma)$ goes back to  S.~J.~Patterson \mbox{(\cite{MR0450547})} in the case of Fuchsian groups, and 
it turns out that his explicit construction extends {\it verbatim} to arbitrary infinite discrete isometry groups of Hadamard manifolds with positive critical exponent  
(see e.g. \cite[Lemma 2.2]{MR1465601}). Notice that in our setting the critical exponent $\delta(\Gamma)$ is always positive since $\Gamma$ contains a non-abelian free group generated by two rank one elements.

One corner stone result concerning these densities is Sullivan's shadow lemma, which gives an asymptotic estimate for the measure of  projected balls 
$\pr_\xo(B_{\gamma\xo}(r))$ as $d(\xo,\gamma\xo)$ tends to infinity, and from which ergodicity properties can be derived -- as well as solutions of 
counting problems (see e.g. \cite{MR2057305}). In a previous article (\cite[Lemma~3.5]{MR2290453}) the first author proved  this lemma in the rank one setting. 
For our purposes here we will need an analogous result  for the smaller and larger sets ${\mathcal O}^-_{r,c}(\xo,\gamma\xo)$ and ${\mathcal O}^+_{r,c}(\xo,\gamma\xo)$. 
One central point is the following elementary lemma originally due to  G.~Knieper (\cite[Lemma~4.1]{MR1465601}, see also \cite[Lemma~3.2]{MR2290453}):

\begin{lemma}\label{openposmass}\
Let $\delta> 0$,  $\mu$  a $\delta$-dimensional  $\Gamma$-invariant conformal density, $x\in\XX$ and $A\subseteq\Lim$ a $\Gamma$-invariant Borel set. Then $\mu_x(A)>0$ implies 
$\mu_x(O\cap A)>0$ for any open set $O\subset\rand$ with $O\cap \Lim\ne\emptyset$.
\end{lemma}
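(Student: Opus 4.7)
\medskip

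\noindent\textbf{Proof proposal.} The strategy is to combine the minimality of the geometric limit set with the $\Gamma$-equivariance and positivity of the Radon-Nikodym densities in~(\ref{conformality}). The two observations at the heart of the argument are: (i) any open set meeting $\Lim$ can, after finitely many $\Gamma$-translations, be used to cover all of $\Lim$; and (ii) the measures $\mu_x$ for varying $x\in\XX$ are all mutually absolutely continuous, so positivity is transported along $\Gamma$-orbits.

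First, since $O\cap\Lim\ne\emptyset$, by~(\ref{coverLimset}) I can choose a finite set $\Lambda=\{\beta_1,\ldots,\beta_k\}\subset\Gamma$ with
\[ \Lim\ \subseteq\ \bigcup_{i=1}^{k}\beta_i O.\]
Because $\mu_x$ is supported on $\Lim$ and $A\subseteq\Lim$, the hypothesis $\mu_x(A)>0$ gives, via subadditivity,
\[ 0\ <\ \mu_x(A)\ \le\ \sum_{i=1}^{k}\mu_x(A\cap\beta_i O),\]
so $\mu_x(A\cap\beta_{i_0} O)>0$ for at least one index $i_0$.

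Next, I would use the $\Gamma$-invariance of $A$ to rewrite $A\cap\beta_{i_0}O=\beta_{i_0}(\beta_{i_0}^{-1}A\cap O)=\beta_{i_0}(A\cap O)$. Equivariance of the density, $\gamma_*\mu_y=\mu_{\gamma y}$, applied with $\gamma=\beta_{i_0}^{-1}$ and $y=x$, yields
\[ \mu_{\beta_{i_0}^{-1}x}(A\cap O)\ =\ \mu_x\!\bigl(\beta_{i_0}(A\cap O)\bigr)\ =\ \mu_x(A\cap\beta_{i_0}O)\ >\ 0.\]

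Finally, the conformality relation~(\ref{conformality}) implies that for every $y\in\XX$ the Radon-Nikodym derivative $\mathrm{d}\mu_y/\mathrm{d}\mu_\xo=\e^{\delta\bs_{\cdot}(\xo,y)}$ is strictly positive (and in fact locally bounded) on all of $\rand$, so the measures $\mu_x$ and $\mu_{\beta_{i_0}^{-1}x}$ have exactly the same null sets. Hence $\mu_x(A\cap O)>0$, completing the proof.

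There is no real obstacle in this argument; the only point where one has to be slightly careful is to invoke~(\ref{coverLimset}) -- i.e.\ the minimality of $\Lim$ under the $\Gamma$-action -- which is available here because $\Gamma$ is non-elementary and contains a rank one isometry. Everything else is bookkeeping with equivariance and the conformality formula.
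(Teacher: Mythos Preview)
Your argument is correct and follows essentially the same route as the paper's own proof: both use the finite-cover property~(\ref{coverLimset}) coming from minimality of $\Lim$, the $\Gamma$-invariance of $A$ to write $A\cap\beta O=\beta(A\cap O)$, equivariance to pass to $\mu_{\beta^{-1}x}$, and the mutual absolute continuity of the $\mu_y$'s from~(\ref{conformality}). The paper phrases it by contrapositive (assuming $\mu_x(O\cap A)=0$ and deducing $\mu_x(A)=0$), whereas you pick out a single $\beta_{i_0}$ directly, but this is a cosmetic difference only.
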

 \begin{proof} This follows again from the minimality of $\Lim$:\  Let $O\subset\rand$ be an open set with $O\cap \Lim\ne\emptyset$ and  
$\Lambda\subset\Gamma$ a finite set \st (\ref{coverLimset}) holds. Then 
$$A =A\cap \Lim \subseteq A\cap \bigcup_{\beta\in\Lambda} \beta O  =\bigcup_{\beta\in\Lambda}\beta (O\cap A)$$ 
by $\Gamma$-invariance of $A$, so  $\mu_x(O\cap A)=0$ would imply 
$$ \mu_x(A)
\le \sum_{\beta\in\Lambda} \mu_x\big(\beta (O\cap A)\big)=\sum_{\beta\in \Lambda} \mu_{\beta^{-1}x}(O\cap A) =0,$$
because $\mu_{\beta^{-1}x}$ is absolutely continuous with respect to $\mu_x$ for all $\beta\in\Lambda$.
\end{proof}

This lemma  allows us to prove the desired shadow lemma; the only difficulty is to bound from below the $\mu_{\xo}$-measure  of  the shadows ${\mathcal O}_{r,c}^-(\xo,\gamma\xo)$. At this point,  Proposition~\ref{smallshadowsarelarge} is crucial in our more general setting, the calculus being identical to the original proof (see Lemma~1.3 in~\cite{MR2057305}).

\begin{proposition}\label{shadowlemma}\
Let  $\delta>0 $  and $\mu$ a  $\delta$-dimensional $\Gamma$-invariant 
conformal density. Then for any $r>0$ there exists a
constant $c_0\ge r$ with the following property: If $c\geq c_0$ there
exists a constant $D=D(c)>1$ \st for all $\gamma\in\Gamma$ with
$d(\xo,\gamo)>2c$ we have
$$ \frac1{D}\; \e^{-\delta d(\xo,\gamma \xo)}\le \mu_\xo\big({\mathcal O}_{r,c}^-(\xo,\gamma\xo)\big)\le \mu_\xo\big(\pr_\xo\big(B_{\gamo}(c)\big)\le \mu_\xo\big({\mathcal O}_{c,c}^+(\xo,\gamo)\big)\le D \e^{-\delta d(\xo,\gamma\xo)}.$$ \end{proposition}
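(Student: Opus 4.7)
The plan is to split the chain of inequalities into four parts. The two middle inequalities
$$\mu_\xo\bigl(\mathcal{O}_{r,c}^-(\xo,\gamma\xo)\bigr)\le \mu_\xo\bigl(\pr_\xo(B_{\gamo}(c))\bigr)\le \mu_\xo\bigl(\mathcal{O}_{c,c}^+(\xo,\gamo)\bigr)$$
are immediate from the set inclusions (\ref{shadrelation}), so the work lies entirely in the outer exponential bounds. For both, the key computational input is a uniform comparison of $\bs_\xi(\xo,\gamma\xo)$ with $d(\xo,\gamma\xo)$: if $\xi$ lies in the larger shadow $\mathcal{O}_{c,c}^+(\xo,\gamma\xo)$ then some geodesic ray from a point of $B_\xo(c)$ to $\xi$ meets $B_{\gamma\xo}(c)$, so decomposing the Busemann cocycle (\ref{cocycle}) along this ray and using $|\bs_\xi(\cdot,\cdot)|\le d(\cdot,\cdot)$ at the two endpoints gives $\bs_\xi(\xo,\gamma\xo)\ge d(\xo,\gamma\xo)-4c$. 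A symmetric argument for $\xi\in\mathcal{O}_{r,c}^-(\xo,\gamma\xo)$ (taking $z=\xo\in B_\xo(r)$ in the definition) yields the two-sided bound $|\bs_\xi(\xo,\gamma\xo)-d(\xo,\gamma\xo)|\le 2c$.

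For the upper bound, I would rewrite using conformality (\ref{conformality}):
$$\mu_\xo(\mathcal{O}_{c,c}^+(\xo,\gamma\xo))=\int_{\mathcal{O}_{c,c}^+(\xo,\gamma\xo)}\e^{-\delta\bs_\eta(\xo,\gamma\xo)}\,\d\mu_{\gamma\xo}(\eta)\le\e^{4\delta c}\e^{-\delta d(\xo,\gamma\xo)}\,\|\mu_{\gamma\xo}\|,$$
and observe that $\Gamma$-equivariance gives $\|\mu_{\gamma\xo}\|=\|\gamma_*\mu_\xo\|=\|\mu_\xo\|$, a uniform constant. This yields the right-hand inequality with a constant depending only on $c$, $\delta$ and $\|\mu_\xo\|$.

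For the lower bound, the first step is the same change of measure, which reduces the problem to bounding $\mu_{\gamma\xo}(\mathcal{O}_{r,c}^-(\xo,\gamma\xo))$ from below. By equivariance this equals $\mu_\xo(\gamma^{-1}\mathcal{O}_{r,c}^-(\xo,\gamma\xo))=\mu_\xo(\mathcal{O}_{r,c}^-(\gamma^{-1}\xo,\xo))$, so it suffices to prove a lower bound of the form
$$\inf_{y\in\XX,\ d(\xo,y)>R}\mu_\xo\bigl(\mathcal{O}_{r,c}^-(y,\xo)\bigr)>0$$
for some appropriate threshold. This is exactly where Proposition \ref{smallshadowsarelarge} enters: choosing $c_0\ge r$ as in that proposition, every such shadow contains either $h^M O$ or $g^M h^M O$, where $O$ is a fixed open neighborhood of $h^+$ and $M$ is fixed. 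Since $h^+\in\Lim$, the set $O$ meets $\Lim$, so Lemma \ref{openposmass} (applied with $A=\Lim$) forces $\mu_\xo(O)>0$, and hence $\mu_\xo(h^M O)>0$ and $\mu_\xo(g^M h^M O)>0$ by quasi-invariance (the Radon--Nikodym derivative in (\ref{conformality}) gives positive finite constants under the fixed isometries $h^M,\,g^M h^M$). Taking the minimum of these two positive constants yields the required uniform lower bound.

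Combining the three ingredients yields the stated inequalities with $D(c)$ taken as the maximum of $\e^{4\delta c}\|\mu_\xo\|$ and the reciprocal of the positive constant $\e^{-2\delta c_0}\min\{\mu_\xo(h^M O),\mu_\xo(g^M h^M O)\}$. The only subtlety I foresee is bookkeeping: one must ensure $c_0$ is chosen large enough to simultaneously satisfy $c_0\ge r$ and to serve in Proposition \ref{smallshadowsarelarge} (hence the requirement $d(\xo,\gamma\xo)>2c$ to guarantee that $\gamma\xo$ is far enough from $\xo$ for that proposition to apply after passing to $y=\gamma^{-1}\xo$). This is essentially the rank one analogue of Sullivan's original argument; the rank one input is concentrated in Proposition \ref{smallshadowsarelarge}, and the remaining computation is the classical one used in Lemma 1.3 of \cite{MR2057305}.
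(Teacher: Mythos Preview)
Your proposal is correct and follows essentially the same route as the paper: the middle inequalities come from (\ref{shadrelation}), the upper bound from the Busemann estimate $\bs_\eta(\xo,\gamma\xo)\ge d(\xo,\gamma\xo)-4c$ combined with conformality and equivariance, and the lower bound from Proposition~\ref{smallshadowsarelarge} together with Lemma~\ref{openposmass} applied after the equivariance step $\mu_{\gamma\xo}(\mathcal{O}_{r,c}^-(\xo,\gamma\xo))=\mu_\xo(\mathcal{O}_{r,c}^-(\gamma^{-1}\xo,\xo))$. The only cosmetic differences are that the paper writes the change of measure via $\essinf/\esssup$ of the Radon--Nikodym derivative rather than as an integral, and applies Lemma~\ref{openposmass} directly to the open sets $h^M O$ and $g^M h^M O$ rather than first to $O$; your constant $\e^{-2\delta c_0}$ is slightly wasteful (since $\bs_\eta\le d$ already suffices for that direction), but this does not affect correctness.
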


\begin{proof} The two inequalities in the middle are obvious from~(\ref{shadrelation}).  We keep the notation from the proofs of Proposition~\ref{lrcinproduct} and \ref{smallshadowsarelarge}. Fix  $r>0$ and choose an open neighborhood  $O\subset \rand$  of $h^+$ and $c_0\ge r$,  $c_0>\width(h)$ according  to Proposition~\ref{smallshadowsarelarge}. Hence for all $c\ge c_0$ and 
any $\gamma\in\Gamma$ with $d(\gamma\xo,\xo)>r+c$ there 
exists $\beta=\beta(\gamma)\in \{h^{M}, g^M h^{M}\}$ such that  
\[ \beta O\subseteq {\mathcal O}_{r,c_0}^-(\gamma\xo,\xo)\subseteq {\mathcal O}_{r,c}^-(\gamma\xo,\xo).\]  
From  Lemma~\ref{openposmass} we know that $q:=\min\{\mu_\xo(h^M O), \mu_\xo (g^Mh^M O)\}$ is positive and depends only on $O$, $M$ and the isometries $h$ and $g$ even if $\beta$ depends on $\gamma$. Hence for all $\gamma\in\Gamma$ with $d(\xo,\gamma\xo)>2c$  
$$  \mu_{\xo}\big({\mathcal O}_{r,c}^-(\gamma\xo,\xo)\big)\geq \mu_{\xo}(\beta O )\geq q>0$$
and
\be
\mu_\xo\big( {\mathcal O}^-_{r,c}(\xo,\gamma\xo)\big)&=&\mu_{\xo}\big(\gamma  {\mathcal O}^-_{r,c}(\gamma^{-1}\xo,\xo)\big)=\mu_{\gamma^{-1}\xo}\big( {\mathcal O}^-_{r,c}(\gamma^{-1}\xo,\xo)\big)\\
&\ge & \essinf_{\rand}
\Big(\frac{\d \mu_{\gamma^{-1} \xo}}{\d \mu_\xo}\Big)\; \mu_{\xo}\big({\mathcal O}_{r,c}^-(\gamma^{-1}\xo,\xo)\big)\geq \e^{-\delta d(\xo,\gamma\xo)}\, q.
\ee

The last inequality is straightforward: If $x,y\in \XX$ and  $\eta\in {\mathcal O}^+_{c,c}(x,y)$, considering a ray $\sigma_{z,\eta}$ from a point $z\in B_{x}(c)$ whose intersection with $B_y(c)$ is non-trivial 
we get
$$\bs_{\eta}(x,y)\geq d(x,y)-4c,$$
hence
\begin{align*}
\mu_\xo\big( {\mathcal O}^+_{c,c}(\xo,\gamma\xo)\big)&=\mu_{\gamma^{-1}\xo}\big( {\mathcal O}^+_{c,c}(\gamma^{-1}\xo,\xo)\big)\\
& \leq  \esssup_{{\mathcal O}^+_{c,c}(\gamma^{-1}\xo,\xo)}\Big(\frac{\d \mu_{\gamma^{-1} \xo}}{\d \mu_\xo}\Big)
\mu_\xo\big( {\mathcal O}^+_{c,c}(\xo,\gamma\xo)\big)
\leq \e^{4c\delta}\e^{-\delta d(\xo,\gamma\xo)}\mu_{\xo}(\rand).\end{align*}
\end{proof}

\begin{remark}\label{dgammasmall} If $d(\xo,\gamma\xo)\le 2c$, then ${\mathcal O}^+_{c,c}(\xo,\gamma\xo)=\rand$ and we have
\[ \mu_\xo\big( {\mathcal O}^+_{c,c}(\xo,\gamma\xo)\big)=\mu_\xo(\rand)\le \e^{2c \delta}\e^{-\delta d(\xo,\gamma\xo)}\mu_\xo(\rand).\]
So in particular the upper bound in Proposition~\ref{shadowlemma} holds for all $\gamma\in\Gamma$.
\end{remark}

\section{Properties of the radial limit set}\label{propradlimset} 

Recall that $\Gamma\subseteq\is(\XX)$ is a discrete isometry group of a Hadamard manifold $\XX$ which contains two rank one isometries $g$ and  $h$ with disjoint fixed point sets, 
and $\xo\in \Ax(h)$ is a fixed base point. 
In this section we will study an important subset of the limit set which is defined as follows.
For $c>0$ and $R\gg 1$ we set
$$ \Lim(c,R):= \bigcup_{\begin{smallmatrix}{\scriptscriptstyle\gamma\in\Gamma}\\{\scriptscriptstyle d(\xo,\gamma\xo)>R}\end{smallmatrix}}\pr_\xo(B_{\gamma\xo}(c)).$$
Obviously, these sets are non-decreasing in $c$ and non-increasing in $R$. 
Next we consider the decreasing limit of these sets as $R$ tends to infinity, i.e.  
\begin{equation}\label{radlimr}
\Lim(c):=  \bigcap_{R>0} \Lim(c,R).
\end{equation}
The {\hl radial limit set} $\radlim$ can now be defined as the increasing limit of the sets $\Lim(c)$ as $c$ tends to infinity, namely
\begin{equation}\label{radlimset}
\radlim:=\bigcup_{c>0} \Lim(c).
\end{equation}
It is independent of the origin $\xo\in\XX$ and will play a central role in the sequel. 

From here on fix  a  $\delta$-dimensional $\Gamma$-invariant conformal density for some $\delta>0$. Notice that by definition of a conformal density we have 
$0<\mu_\xo(\rand)<\infty$, and we will assume 
that $\mu_\xo$ is normalized \st $\mu_\xo(\rand)=1$.  
Concerning the $\mu_\xo$-measure of the radial limit set, we have the following easy lemma which is  
straightforward once we have the shadow lemma for projections. Even though the proof is exactly the same as in the case of CAT$(-1)$-spaces 
(compare statement (a) in \cite[p. 19]{MR2057305}), we include it here for the convenience of the reader. The converse statement is more involved and is Proposition~\ref{divseries} below.

\begin{lemma}\label{convseries}\ 
If $\ \sum_{\gamma\in\Gamma} \e^{-\delta  d(\xo,\gamo)}$ converges, then  $\mu_\xo(\radlim)=0$.
\end{lemma}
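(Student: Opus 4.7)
The plan is to use the upper-bound half of the shadow lemma (Proposition~\ref{shadowlemma}) as a Borel--Cantelli-type estimate on the decreasing family $\Lim(c,R)$. First I would reduce the claim to showing $\mu_\xo(\Lim(c))=0$ for every $c>0$, since $\radlim=\bigcup_{c\in\NN} \Lim(c)$ is a countable increasing union of the sets $\Lim(c)$, so by continuity of measure it suffices to bound each one.

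Fix $c>0$. Without loss of generality I can enlarge $c$ so that $c\geq c_0$ is in the regime where Proposition~\ref{shadowlemma} applies, because $\Lim(c)$ is non-decreasing in $c$. Next I would estimate $\mu_\xo$ of the set
\[ \Lim(c,R)=\bigcup_{\gamma\in\Gamma,\, d(\xo,\gamma\xo)>R}\pr_\xo\bigl(B_{\gamma\xo}(c)\bigr) \]
by subadditivity: by the middle inequality in Proposition~\ref{shadowlemma} (together with Remark~\ref{dgammasmall}, which guarantees the upper bound holds universally so there is no small-$d$ issue), there exists $D=D(c)>0$ such that
\[ \mu_\xo\bigl(\Lim(c,R)\bigr)\leq \sum_{\gamma\in\Gamma,\,d(\xo,\gamma\xo)>R}\mu_\xo\bigl(\pr_\xo(B_{\gamma\xo}(c))\bigr)\leq D\sum_{\gamma\in\Gamma,\,d(\xo,\gamma\xo)>R}\e^{-\delta d(\xo,\gamma\xo)}. \]

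The right-hand side is the tail of the Poincar{\'e} series at the exponent $\delta$, which by hypothesis is convergent, so it tends to zero as $R\to\infty$. Since $\Lim(c)=\bigcap_{R>0}\Lim(c,R)$ is the decreasing intersection of the $\Lim(c,R)$ (which have finite $\mu_\xo$-measure, since $\mu_\xo$ is finite), continuity of measure from above gives $\mu_\xo(\Lim(c))=\lim_{R\to\infty}\mu_\xo(\Lim(c,R))=0$. Letting $c\to\infty$ (along integers, say), monotone convergence yields $\mu_\xo(\radlim)=0$, as desired. No real obstacle is expected here; the entire argument is a direct application of the upper half of the shadow lemma, and the only mild subtlety is making sure $c$ is taken large enough that the shadow lemma's constant $c_0$ is available, which is costless because the sets $\Lim(c)$ grow with $c$.
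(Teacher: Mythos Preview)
Your argument is correct and essentially identical to the paper's: both use subadditivity on $\Lim(c,R)$ together with the upper bound in Proposition~\ref{shadowlemma} to control $\mu_\xo(\Lim(c,R))$ by the tail of the Poincar\'e series, then let $R\to\infty$. The only cosmetic difference is that the paper argues by contradiction (assuming $\mu_\xo(\radlim)>0$) while you proceed directly via continuity of measure.
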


\begin{proof}    Suppose $\ \sum_{\gamma\in\Gamma} \e^{-\delta d(\xo,\gamo)}<\infty$ and $\mu_\xo(\radlim)>0$. Then by~(\ref{radlimset}) there exists a constant $C>0$ \st for all $c>C$ we have 
$$\mu_\xo\big(\Lim(c)\big)>0.$$
Fix $c>\max\{c_0,C\}$, where $c_0>0$ is the constant provided by the shadow lemma  Proposition~\ref{shadowlemma} (for $r>0$ arbitrary). Then by~(\ref{radlimr}) we have for any $R>0$ 
$$ 0<\mu_\xo\big(\Lim(c,R)\big)
\le \sum_{\begin{smallmatrix}{\scriptscriptstyle\gamma\in\Gamma}\\{\scriptscriptstyle d(\xo,\gamma\xo)>R}\end{smallmatrix}}\mu_\xo\big(\pr_\xo(B_{\gamma\xo}(c))\big)
\le  D(c) \sum_{\begin{smallmatrix}{\scriptscriptstyle\gamma\in\Gamma}\\{\scriptscriptstyle d(\xo,\gamma\xo)>R}\end{smallmatrix}} \e^{-\delta d(\xo,\gamma\xo)},
$$
hence the tail of the Poincar\'e series does not tend to zero. We conclude that\\
$\sum_{\gamma\in\Gamma} \e^{-\delta d(\xo,\gamma\xo)}$ diverges, in contradiction to the assumption.
\end{proof}

The following proposition states that $\Gamma$ acts ergodically on the radial limit set with respect to the measure class defined by $\mu$. 
\begin{proposition}\label{ergodicity}\ 
If $A\subset \radlim$ is a $\Gamma$-invariant Borel subset of $\radlim$, then
$\mu_\xo(A)=0$ or $\mu_\xo(A)=\mu_\xo(\rand)=1$.
\end{proposition}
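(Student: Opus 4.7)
The strategy is to argue by contradiction: assume $A$ and $B := \rand\setminus A$ are both $\Gamma$-invariant Borel sets of positive $\mu_\xo$-measure (with $A\subseteq\radlim$). I will then derive a positive lower bound for the density of $B$ in arbitrarily small shadow neighborhoods of a typical point of $A$, contradicting a Lebesgue differentiation property. Since $A\subseteq\radlim=\bigcup_{c>0}\Lim(c)$ with $\Lim(c)$ nondecreasing in $c$, I pick $c\geq c_0$, with $c_0$ furnished by the shadow lemma (Proposition~\ref{shadowlemma}), large enough so that $A\cap\Lim(c)$ still has positive $\mu_\xo$-measure. For each $\xi\in A\cap\Lim(c)$, by definition of $\Lim(c)$, there is a sequence $\gamma_n\in\Gamma$ with $d(\xo,\gamma_n\xo)\to\infty$ and $\xi\in\pr_\xo(B_{\gamma_n\xo}(c))\subseteq{\mathcal O}^+_{c,c}(\xo,\gamma_n\xo)$, so a natural shrinking family of shadow neighborhoods of $\xi$ is at hand.

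The key computation uses the conformal density formula together with $\Gamma$-invariance of $B$. Writing $\gamma=\gamma_n$ and using $\gamma_*\mu_{\gamma^{-1}\xo}=\mu_\xo$, I obtain
\begin{equation*}
\mu_\xo(B\cap{\mathcal O}^+_{c,c}(\xo,\gamma\xo))=\int_{B\cap{\mathcal O}^+_{c,c}(\gamma^{-1}\xo,\xo)}\e^{\delta\bs_\eta(\xo,\gamma^{-1}\xo)}\,\d\mu_\xo(\eta).
\end{equation*}
For $\eta\in{\mathcal O}^+_{c,c}(\gamma^{-1}\xo,\xo)$ a ray joining a point in $B_{\gamma^{-1}\xo}(c)$ to $\eta$ crosses $B_\xo(c)$, and the triangle inequality gives $\bs_\eta(\xo,\gamma^{-1}\xo)\geq d(\xo,\gamma\xo)-4c$. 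Combined with the upper bound $\mu_\xo({\mathcal O}^+_{c,c}(\xo,\gamma\xo))\leq D\e^{-\delta d(\xo,\gamma\xo)}$ from Proposition~\ref{shadowlemma}, this yields
\begin{equation*}
\frac{\mu_\xo(B\cap{\mathcal O}^+_{c,c}(\xo,\gamma_n\xo))}{\mu_\xo({\mathcal O}^+_{c,c}(\xo,\gamma_n\xo))}\ \geq\ \frac{\e^{-4c\delta}}{D}\,\mu_\xo\bigl(B\cap{\mathcal O}^+_{c,c}(\gamma_n^{-1}\xo,\xo)\bigr).
\end{equation*}
After passing to a subsequence so that $\gamma_n^{-1}\xo$ converges to some $\eta_\infty\in\ganz$, a direct inspection shows that the shadows ${\mathcal O}^+_{c,c}(\gamma_n^{-1}\xo,\xo)$ exhaust $\rand\setminus\{\eta_\infty\}$ (the atom of $\mu_\xo$ at $\eta_\infty$, if any, is handled by replacing $\gamma_n$ by $\gamma\gamma_n$ for some $\gamma\in\Gamma$ using the non-elementarity and minimality of $\Lim$). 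Hence $\mu_\xo(B\cap{\mathcal O}^+_{c,c}(\gamma_n^{-1}\xo,\xo))\to\mu_\xo(B)>0$, so the density of $B$ in the shadows around $\gamma_n\xo$ is bounded below by a fixed $\kappa>0$ for $n$ large.

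On the other hand, the comparability between ${\mathcal O}^-_{r,c}$, $\pr_\xo(B_{\gamma\xo}(c))$ and ${\mathcal O}^+_{c,c}$ in Proposition~\ref{shadowlemma} furnishes a Besicovitch/Vitali-type covering property for the family of shadows on $(\rand,\mu_\xo)$, which yields a Lebesgue differentiation theorem along this basis: for $\mu_\xo$-almost every $\xi\in A$, the density of $B$ in the shrinking shadows ${\mathcal O}^+_{c,c}(\xo,\gamma_n\xo)\ni\xi$ must tend to $0$. Comparing with the lower bound $\kappa>0$ above gives the desired contradiction, so $\mu_\xo(B)=0$ and $\mu_\xo(A)=\mu_\xo(\rand)=1$. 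The main obstacle is the Lebesgue differentiation step: one must justify carefully that the shadow family forms a genuine differentiation basis, which is where the uniform comparability of the three types of shadows in the shadow lemma is indispensable, exactly as in Roblin's CAT$(-1)$ argument.
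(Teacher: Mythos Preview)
Your overall strategy coincides with the paper's: assume $\mu_\xo(A),\mu_\xo(B)>0$, invoke a Lebesgue differentiation theorem along shadows (the paper states and proves this as a separate lemma, with the Vitali-type covering argument you allude to), and transfer the density computation via conformality and $\Gamma$-equivariance to the reversed shadows ${\mathcal O}^+_{c,c}(\gamma_n^{-1}\xo,\xo)$.

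The genuine gap is your claim that ``a direct inspection shows that the shadows ${\mathcal O}^+_{c,c}(\gamma_n^{-1}\xo,\xo)$ exhaust $\rand\setminus\{\eta_\infty\}$.'' This is a visibility statement that holds in CAT$(-1)$ spaces but is \emph{false} in general Hadamard manifolds: if $\gamma_n^{-1}\xo$ escapes through a flat region, the geodesics from $\gamma_n^{-1}\xo$ to a fixed boundary point $\xi$ need not come anywhere near $\xo$ (already in $\RR^2$ the reversed shadows collapse rather than exhaust). So the limit $\mu_\xo(B\cap{\mathcal O}^+_{c,c}(\gamma_n^{-1}\xo,\xo))\to\mu_\xo(B)$ is unjustified, and this is precisely the step where the rank one hypothesis must enter.

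The paper repairs this by invoking Proposition~\ref{smallshadowsarelarge}: for $c$ large enough there is a \emph{fixed} open set $O\subset\rand$ meeting $\Lim$ such that one of $h^MO$ or $g^Mh^MO$ lies inside ${\mathcal O}^-_{r,c}(\gamma^{-1}\xo,\xo)$ for every $\gamma$ with $d(\xo,\gamma\xo)$ large. One then uses Lemma~\ref{openposmass} (minimality of $\Lim$) to conclude that the $\Gamma$-invariant set $B$ of positive measure has positive measure in each of these two fixed open sets, giving the uniform lower bound $\mu_\xo(B\cap{\mathcal O}^+_{c,c}(\gamma_n^{-1}\xo,\xo))\ge\min\{\mu_\xo(B\cap h^MO),\mu_\xo(B\cap g^Mh^MO)\}>0$. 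With this substitution your argument goes through; but the ``direct inspection'' you invoke is exactly where the rank one machinery is indispensable and cannot be elided.
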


The key step in the proof of this proposition is the following Lebesgue Besicovich' type derivation lemma whose proof  is very similar to the classical one in $\RR^n$ 
(see e.g. Chapter 11 in  \cite{MR2245472}). Here, balls are replaced by projections (called as well shadows) as in \cite{MR556586}. It is not clear in our setting whether
 or not projections are balls for a metric on $\rand$, and this is the main reason why such a lemma is needed (compare \cite{MR1465601}). Actually, the proof is the same as in the CAT$(-1)$ setting, but depends at several points on the shadow lemma. We give an account of this proof, since the arguments will be useful later on.

We recall the definition of the sets $\Lim(c)$ from (\ref{radlimr}).

\begin{lemma}\label{densitypoints}\  Fix $r>0$, let $c_0>0$ be the constant provided by Proposition~\ref{shadowlemma}  and let $c\ge c_0$. 
Then $\mu_\xo$-a.e. $\xi\in \Lim(c)$ is a density point, i.e. for any bounded Borel function 
$ \phi: \partial \XX\to \RR$ and $\mu_\xo$-a.e. $\xi \in \Lim(c)$  we have
$$\lim_{\begin{smallmatrix}{\scriptscriptstyle d(o,\gamma\xo)\to +\infty}\\{\scriptscriptstyle \xi\in \prsm_\xo(B_{\gamma\xo}(c))}\end{smallmatrix}}
\frac{1}{\mu_\xo\big(\pr_\xo(B_{\gamma\xo}(c))\big)}\int_{\pr_\xo(B_{\gamma\xo}(c))}\phi\; \d \mu_o=\phi(\xi).$$
\end{lemma}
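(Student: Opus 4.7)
The plan is to adapt the classical Lebesgue--Besicovich differentiation argument, with the shadows $\pr_\xo(B_{\gamma\xo}(c))$ playing the role of metric balls. Three ingredients are required: a geometric engulfing property for shadows, a Vitali-type covering lemma yielding a weak-type $(1,1)$ maximal inequality, and the standard Banach-principle density argument for continuous approximants.

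First I would establish the engulfing property: there exists $c' = c'(c) > c$ such that whenever $\gamma_1,\gamma_2\in\Gamma$ satisfy $d(\xo,\gamma_1\xo)\le d(\xo,\gamma_2\xo)$ and the shadows $\pr_\xo(B_{\gamma_1\xo}(c))$, $\pr_\xo(B_{\gamma_2\xo}(c))$ meet, then $\pr_\xo(B_{\gamma_2\xo}(c))\subseteq \pr_\xo(B_{\gamma_1\xo}(c'))$. This is a pure Hadamard convexity argument: for $\xi,\eta$ in the intersection, the unit-speed rays $\sigma_{\xo,\xi}$ and $\sigma_{\xo,\eta}$ both come within $c$ of $\gamma_2\xo$, hence are within $4c$ of each other at time $t=d(\xo,\gamma_2\xo)$. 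Convexity of $t\mapsto d(\sigma_{\xo,\xi}(t),\sigma_{\xo,\eta}(t))$ together with vanishing at $t=0$ forces the two rays to stay within $4c$ at every earlier time, in particular near $t=d(\xo,\gamma_1\xo)$, where $\sigma_{\xo,\xi}$ comes within $2c$ of $\gamma_1\xo$; hence $c':=6c$ suffices.

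Combining the engulfing property with the shadow lemma (Proposition~\ref{shadowlemma}) produces a uniform doubling-type comparison: for $c\ge c_0$ and $d(\xo,\gamma\xo)$ large enough,
\[
\mu_\xo\bigl(\pr_\xo(B_{\gamma\xo}(c'))\bigr)\;\le\;D(c)\,D(c')\,\mu_\xo\bigl(\pr_\xo(B_{\gamma\xo}(c))\bigr).
\]
A standard greedy selection then extracts from any countable cover of $E\subseteq\rand$ by shadows of size $c$ a pairwise disjoint subfamily whose $c'$-enlargements still cover $E$, yielding the weak-type $(1,1)$ estimate
\[
\mu_\xo\bigl(\{M_c\phi>\lambda\}\bigr)\;\le\;\frac{C}{\lambda}\,\|\phi\|_{L^1(\mu_\xo)}
\]
for the natural maximal operator $M_c\phi(\xi)$ defined as the supremum of the averages $\mu_\xo(\pr_\xo(B_{\gamma\xo}(c)))^{-1}\int_{\pr_\xo(B_{\gamma\xo}(c))}|\phi|\,\d\mu_\xo$ over those $\gamma$ with $\xi\in \pr_\xo(B_{\gamma\xo}(c))$ and $d(\xo,\gamma\xo)$ large.

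Finally, continuous $\phi$ trivially satisfy the stated limit, because the diameters of the shadows $\pr_\xo(B_{\gamma\xo}(c))$ containing $\xi$ tend to $0$ as $d(\xo,\gamma\xo)\to\infty$, so uniform continuity of $\phi$ forces the averages to converge to $\phi(\xi)$. For bounded Borel $\phi$ one approximates in $L^1(\mu_\xo)$ by continuous functions and controls the exceptional set by the maximal inequality, a textbook Banach-principle argument; restricting to $\Lim(c)$ concludes. The only delicate point is the engulfing property, where one might fear trouble from the absence of an upper curvature bound or from flat strips; the argument above uses only Hadamard convexity of the distance function together with uniqueness of the ray $\sigma_{\xo,\xi}$, so Sullivan's original reasoning goes through unchanged.
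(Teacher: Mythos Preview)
Your argument is correct and runs along the same general lines as the paper's (reduce to a weak-type $(1,1)$ maximal inequality via a covering lemma, then approximate by continuous functions), but the covering step is organized differently. The paper, following Roblin, builds a \emph{bounded-multiplicity} subcover: it separates the orbit so that distinct selected points satisfy $d(\gamma\xo,\gamma'\xo)\ge 1$, stratifies $\Gamma$ into annular layers $2c+k\le d(\xo,\gamma\xo)<2c+k+1$, and within each layer keeps only those $\gamma$ whose $(c{+}1)$-shadow is disjoint from all shadows in earlier layers; the key geometric fact is that any discarded $(c{+}1)$-shadow lies in the $(3c{+}3)$-shadow of some selected $\gamma$, and any two selected $(3c{+}3)$-shadows that meet must come from the same layer with $d(\gamma\xo,\gamma'\xo)\le 12c+13$, giving finite overlap. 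You instead prove a clean \emph{engulfing} property (if two $c$-shadows meet and $d(\xo,\gamma_1\xo)\le d(\xo,\gamma_2\xo)$ then the farther one sits in the $6c$-shadow of the nearer) and run a genuine Vitali selection producing a pairwise \emph{disjoint} subfamily whose $6c$-enlargements cover. Both routes feed the shadow lemma at the same spot---comparing $\mu_\xo$ on shadows of radius $c$ and $c'$---and yield the same Markov inequality; your version is closer to Sullivan's original and slightly slicker, while the paper's layered construction avoids the transfinite flavor of greedy selection and makes the overlap constant completely explicit. One small slip: in your engulfing argument you write ``for $\xi,\eta$ in the intersection'', but you clearly mean $\xi$ in the intersection and $\eta$ an arbitrary point of $\pr_\xo(B_{\gamma_2\xo}(c))$; with that reading the convexity computation is correct and $c'=6c$ is fine.
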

\begin{proof}  We define  for $c\ge c_0$ and  $\gamma\in\Gamma$ with $d(\xo,\gamma\xo)>c$ sufficiently large  the mean value $M_\gamma^c(\psi)$ of a bounded Borel function $\psi:\rand\to\RR$ by 
\[ M_\gamma^c(\psi):=\frac1{\mu_\xo\big(\pr_\xo(B_{\gamma\xo}(c))\big)}\int_{\pr_\xo(B_{\gamma\xo}(c))}\psi\; \d \mu_o;\]
for $\mu_\xo$-a.e. $\xi\in \Lim(c)$ we further define the maximal function associated with $\psi$ by
\[ \psi^*(\xi):=\limsup_{\begin{smallmatrix}{\scriptscriptstyle d(o,\gamma\xo)\to +\infty}\\{\scriptscriptstyle \xi\in \prsm_\xo(B_{\gamma\xo}(c))}\end{smallmatrix}} M_\gamma^c(\psi).\]

Now let $\phi:\rand\to\RR$ be a bounded Borel function and $(\phi_n)$ a sequence of continuous functions whose limit $\mu_o$-a.e. and in $\LL^1(\mu_o)$ is $\phi$. 
The triangle inequality implies that for all $n\in\NN$ and $\gamma\in\Gamma$, $\xi\in\rand$ \st $\xi\in\pr_\xo(B_{\gamma\xo}(c))$ we have
\[ | M_\gamma^c(\phi)-\phi(\xi)|\le |\phi -\phi_n|^*(\xi)+ | M_\gamma^c(\phi_n)-\phi_n(\xi)|+|\phi_n-\phi|(\xi).\]
Since the lemma is obviously true for continuous functions, it only remains to prove the following Markov-type inequality:
\begin{equation}\label{statisticalcontrol}
 \mu_o(\{ \phi^*>\ep\})\leq \frac{C}{\ep}\int_{\partial \XX}| \phi|\,\d \mu_\xo.
\end{equation}
Its application to the bounded Borel function $\psi:=|\phi-\phi_n|$ then gives the conclusion of the lemma for any $\phi$ as above.

In order to prove inequality (\ref{statisticalcontrol}) we note that the set
\[ S(\psi,\ep):=\{\xi\in\Lim(c)\colon \psi^*(\xi)>\ep\}\] 
is infinitely covered by
\[ \bigcup_{ \gamma\in\widetilde\Gamma} \pr_\xo(B_{\gamma\xo}(c+1)),\quad \text{where }\ \widetilde\Gamma:=\{\gamma\in\Gamma\colon   d(\xo,\gamma\xo)\ge 2c+1, \ M_\gamma^c(\psi)>\ep\}. \]
The idea is to construct by induction a bounded multiplicity subcovering of $S(\psi,\ep )$. For that purpose we first denote by 
 $G:=G(c)\subset \widetilde \Gamma$ the set of elements $\gamma\in\widetilde \Gamma$ \st 
 \begin{align*}
 &\bigl( \gamma, \gamma'\in G,\  \gamma\neq \gamma'\ \Longrightarrow\ d(\gamma\xo,\gamma'o)\geq 1\bigr) \quad \text{and} \quad 
 \{ \psi^*>\ep \}\subseteq \bigcup_{\gamma\in G} \pr_\xo(B_{\gamma\xo}(c+1));
 \end{align*}
we set $  G_1:=\{\gamma\in G\colon  2c+1\leq d(\xo,\gamma \xo)<2c+2\}$ and then define for $k\geq 2$\\  
  $G_{k-1}':=G_1\cup\cdots\cup G_{k-1}\ $ and
\be G_k=\{ \gamma\in G  \colon  && \hspace{-5mm}  2c+k\leq d(\xo,\gamma \xo)<2c+k+1 \quad  \mbox{and}\\
&& \pr_\xo(B_{\gamma\xo}(c+1))\cap 
\pr_\xo(B_{\gamma'\xo}(c+1))=\emptyset\quad\, \forall\,  \gamma'\in G_{k-1}'\}.\ee
We now consider the subset $\Gamma^*\subset\widetilde\Gamma$ defined by
$$\Gamma^*:=\bigcup_{k\geq 1}G_k;$$
it is then a straightforward consequence of the triangle inequality that
  \begin{equation}\label{etoile}
  \bigcup_{\gamma\in \widetilde\Gamma}\pr_\xo(B_{\gamma\xo}(c+1))\subseteq \bigcup_{\gamma\in \Gamma^*}\pr_\xo(B_{\gamma\xo}(3c+3)).
  \end{equation}
  By construction, if $\gamma,\gamma'\in\Gamma^*$ satisfy
  \begin{equation}\label{intersectwithprime} \pr_\xo(B_{\gamma\xo}(3c+3))\cap \pr_\xo(B_{\gamma'\xo}(3c+3))\ne\emptyset,\end{equation}
then $\gamma,\gamma'$ belong to $G_k$ for the same $k$ and 
$d(\gamma'\xo,\gamma\xo)\leq 12c+13$ by the triangle inequality. The additional condition $d(\gamma\xo,\gamma'\xo)\ge 1$ -- which is necessary if we think of parabolic or mixed isometries in the sense of \cite[Definition~6.1]{MR823981} -- implies that there is a finite number of $\gamma'$ (depending on $c$ but not on $\gamma$ and $k$)  \st (\ref{intersectwithprime}) holds for any fixed $\gamma$. Consequently,
\begin{align*} 
   \mu_0(\{ \phi^*>\ep\})
   &\stackrel{(\ref{etoile})}{\leq} \sum_{\gamma\in \Gamma^*}\mu_\xo\big(\pr_\xo(B_{\gamma\xo}(3c+3))\big)\le    D'D \sum_{\gamma\in \Gamma^*}\mu_\xo\big( \pr_\xo(B_{\gamma\xo}(c))\big)\\
   &\leq \frac{D'D}{\ep}
 \sum_{\gamma\in \Gamma^*} \int_{\pr_\xo(B_{\gamma\xo}(c))} | \psi | \d \mu_\xo\le  \frac{MD'D}{\ep}\int_{\partial \XX}| \psi | \d \mu_\xo,\end{align*}
   where we used the shadow lemma Proposition~\ref{shadowlemma}, the fact that $M_\gamma^c(\psi)>\ep$ for $\gamma\in\Gamma^*$ and the finiteness of the subcovering. This is the desired inequality.\end{proof}

\begin{proof}[Proof of Proposition~\ref{ergodicity}.]  Let $A$ be a  $\Gamma$-invariant Borel subset of $\radlim$ such that\break $\mu_0(A)>0$. 
For $c\ge c_0$ sufficiently large the set $A\cap \Lim(c)$ is of positive $\mu_\xo$-measure, hence the above lemma applied to the characteristic function $\chi_{A}$ of $A$  gives
   $$\lim_{\begin{smallmatrix}{\scriptscriptstyle d(\xo,\gamma\xo)\to +\infty}\\{\scriptscriptstyle \xi\in \prsm_\xo(B_{\gamma\xo}(c))}\end{smallmatrix}}
\frac{\mu_\xo(\pr_\xo(B_{\gamma\xo}(c))\cap A)}{\mu_o\big(\pr_\xo(B_{\gamma\xo}(c)))}=1$$
(or equivalently with $A^c:=\rand\setminus A$ 
  $$\lim_{\begin{smallmatrix}{\scriptscriptstyle d(\xo,\gamma\xo)\to +\infty}\\{\scriptscriptstyle \xi\in \prsm_\xo(B_{\gamma\xo}(c))}\end{smallmatrix}}
\frac{\mu_\xo(\pr_\xo(B_{\gamma\xo}(c))\cap A^c)}{\mu_o\big(\pr_\xo(B_{\gamma\xo}(c)))}=0)$$
for $\mu_\xo$-a.e. $\xi\in A\cap \Lim(c)$.
Conformality~(\ref{conformality}) and  $\Gamma$-equivariance of $\mu$ imply that for any $\Gamma$-invariant subset $B$ of $\partial \XX$  we have
$$\mu_\xo(\pr_\xo(B_{\gamma\xo}(c))\cap B) \asymp \e^{\delta d(\xo,\gamma\xo)}\, \mu_\xo(\pr_{\gamma^{-1}\xo}(B_\xo(c))\cap B),$$
 where the symbol $\asymp$ means that one quantity is bounded from above and below by the second one up to  universal multiplicative constants. 
Applying this estimate to $B=A^c$ for the numerator and to $B=\partial \XX$ for the denominator gives 
  $$
\frac{\mu_\xo(\pr_{\gamma^{-1}\xo}(B_{\xo}(c))\cap A^c)}{\mu_o\big(\pr_{\gamma^{-1}\xo}(B_{\xo}(c)))}\asymp \frac{\e^{\delta d(\xo,\gamma\xo)}\mu_\xo(\pr_\xo(B_{\gamma\xo}(c))\cap A^c)}{\e^{\delta d(\xo,\gamma\xo)}\mu_o\big(\pr_\xo(B_{\gamma\xo}(c)))}=\frac{\mu_\xo(\pr_\xo(B_{\gamma\xo}(c))\cap A^c)}{\mu_o\big(\pr_\xo(B_{\gamma\xo}(c)))},$$
so that 
 $$\lim_{\begin{smallmatrix}{\scriptscriptstyle d(o,\gamma\xo)\to +\infty}\\{\scriptscriptstyle \xi\in \prsm_\xo(B_{\gamma\xo}(c))}\end{smallmatrix}}
\frac{\mu_\xo(\pr_{\gamma^{-1}\xo}(B_\xo(c))\cap A^c)}{\mu_\xo\big(\pr_{\gamma^{-1}\xo}(B_\xo(c))\big)}=0$$ 
for $\mu_o$-a.e. $\xi\in A\cap \Lim(c)$. If $c>c_0$ is big enough, then by Proposition~\ref{smallshadowsarelarge} there exists an open set $O\subseteq \rand$ whose intersection 
with $\Lim$ is non-trivial and which is contained in $\pr_{\gamma_j^{-1}\xo}(B_\xo(c))$ for a sequence  $(\gamma_j)\subset\Gamma$ such that $d(\xo,\gamma_j\xo)\to \infty$. 
Since  $\mu_\xo(\rand)=1$ and $O\cap A^c\subseteq \pr_{\gamma_j^{-1}\xo}(B_\xo(c))\cap A^c$ for all $j\in\NN$ we obtain
$$\mu_o(O\cap A^c)\le \limsup_{j\to\infty} \frac{\mu_\xo(\pr_{\gamma_j^{-1}\xo}(B_\xo(c))\cap A^c)}{\mu_\xo\big(\pr_{\gamma_j^{-1}\xo}(B_\xo(c))\big)}= 0.$$
But from the $\Gamma$-invariance of $A^c$ and Lemma~\ref{openposmass} we know that $\mu_\xo(A^c)>0$ would imply $\mu_\xo(O\cap A^c)>0$, so $\mu_\xo(A^c)=0$.
\end{proof}

Next we state a result concerning the atomic part of a $\delta$-dimensional  $\Gamma$-invariant conformal density with $\delta>0$ which will be needed later.
The proof is due to P.~J.~Nicholls (\cite[Theorem 3.5.3]{MR1041575}) in the case where $\XX$ is hyperbolic $n$-space. 
\begin{proposition}\label{atomicpart}\ 
%Let $\delta>0$ and $\mu$ a $\delta$-dimensional conformal density. Then 
A radial limit point cannot be a point mass for $\mu$.
 \end{proposition}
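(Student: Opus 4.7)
My plan is a classical contradiction argument in the spirit of Nicholls: I would suppose $\xi\in\radlim$ were an atom, $\mu_\xo(\{\xi\})>0$, and produce for large $n$ a single point mass exceeding the total mass of $\mu_\xo$.

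First, I would extract from the definitions~(\ref{radlimr})--(\ref{radlimset}) a constant $c>0$ and a sequence $(\gamma_n)\subset\Gamma$ with $d(\xo,\gamma_n\xo)\to\infty$ and $\xi\in\pr_\xo(B_{\gamma_n\xo}(c))$ for every $n$. Unwinding the projection, this means precisely that the ray $\sigma_{\xo,\xi}$ passes within distance $c$ of each $\gamma_n\xo$: pick $t_n>0$ with $d(\gamma_n\xo,\sigma_{\xo,\xi}(t_n))<c$, which by the triangle inequality forces $t_n\ge d(\xo,\gamma_n\xo)-c$. A short Busemann function estimate along $\sigma_{\xo,\xi}$ then yields the bound I need: for $s>t_n$ one has $d(\gamma_n\xo,\sigma_{\xo,\xi}(s))\le c+(s-t_n)$, so passing to the limit $s\to\infty$ in~(\ref{buseman}) gives
\[ \bs_\xi(\xo,\gamma_n\xo) \;\ge\; t_n-c \;\ge\; d(\xo,\gamma_n\xo)-2c, \]
which in particular tends to $+\infty$.

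Next I would combine $\Gamma$-equivariance with the conformality relation~(\ref{conformality}) of $\mu$. Applying the equivariance identity $\mu_{\gamma_n\xo}(E)=\mu_\xo(\gamma_n^{-1}E)$ to $E=\{\xi\}$, and then~(\ref{conformality}) at the point $\gamma_n\xo$, gives
\[ \mu_\xo(\{\gamma_n^{-1}\xi\}) \;=\; \mu_{\gamma_n\xo}(\{\xi\}) \;=\; \e^{\delta\bs_\xi(\xo,\gamma_n\xo)}\,\mu_\xo(\{\xi\}) \;\ge\; \e^{\delta(d(\xo,\gamma_n\xo)-2c)}\,\mu_\xo(\{\xi\}). \]
Under the assumption $\mu_\xo(\{\xi\})>0$ the right-hand side tends to $+\infty$ as $n\to\infty$; but any single-point mass is bounded above by $\mu_\xo(\rand)=1$, and this is the desired contradiction.

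I do not foresee any serious obstacle in executing this plan. The one point requiring care is to apply equivariance and conformality in the correct direction, so that the exponent produced is $\bs_\xi(\xo,\gamma_n\xo)$ (controlled by the radial hypothesis) and not $\bs_\xi(\xo,\gamma_n^{-1}\xo)$ (for which no control is available, since the orbit $\gamma_n^{-1}\xo$ need not approach anything in particular). It is worth noting that this argument uses nothing beyond the finiteness of $\mu_\xo$, the definition of the radial limit set, and the defining properties of a conformal density; neither the rank one hypothesis, nor the shadow lemma, nor the minimality of $\Lim$ plays any role here.
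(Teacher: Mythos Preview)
Your argument is correct. The key Busemann estimate $\bs_\xi(\xo,\gamma_n\xo)\ge d(\xo,\gamma_n\xo)-2c$ and the combination of equivariance with conformality are exactly what the paper uses as well, so the underlying mechanism is the same.

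The route, however, is a bit more direct than the paper's. The paper first proves two auxiliary facts: Lemma~\ref{stabi} (elements of the stabilizer $\Gamma_\eta$ leave the Busemann function unchanged) and Lemma~\ref{convatom} (for an atom $\eta$, the sum $\sum_{g\in G}\e^{\delta\bs_\eta(\xo,g^{-1}\xo)}$ over a set $G$ of coset representatives for $\Gamma/\Gamma_\eta$ is finite, because the points $g\eta$ are pairwise distinct and hence the masses $\mu_\xo(\{g\eta\})$ sum to at most $\mu_\xo(\rand)$). Having these in hand, the paper arranges that the $\gamma_j^{-1}$ lie in pairwise distinct cosets (using Lemma~\ref{stabi} and strict monotonicity of $\bs_\eta(\xo,\gamma_j\xo)$), and then notes that the corresponding sum diverges since its terms tend to infinity, contradicting Lemma~\ref{convatom}. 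You short-circuit this: rather than summing over distinct cosets, you simply observe that a \emph{single} point mass $\mu_\xo(\{\gamma_n^{-1}\xi\})$ eventually exceeds the total mass. This avoids the coset bookkeeping entirely and makes no use of the stabilizer lemma. What the paper's detour buys is only the two lemmas themselves, which may be of independent interest; for the proposition at hand your argument is the more economical one.
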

Before we give the proof, we will show the following two easy results concerning arbitrary point masses of a $\delta$-dimensional  $\Gamma$-invariant  conformal density with $\delta>0$. 
For $\eta\in\rand$ we denote $\Gamma_\eta:=\{\gamma\in\Gamma \colon \gamma\eta=\eta\}$ its stabilizer in $\Gamma$.
\begin{lemma}\label{stabi}
If $\eta$ is a point mass for  $\mu$, then for all $x\in\XX$ and all $\gamma\in\Gamma_\eta$ we have
$$\bs_\eta(x,\gamma x)=0.$$
\end{lemma}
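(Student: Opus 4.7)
The plan is to combine the two defining properties of a conformal density, namely the Radon--Nikodym relation~(\ref{conformality}) and $\Gamma$-equivariance, evaluated on the singleton $\{\eta\}$.

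First I would record that, writing $a := \mu_\xo(\{\eta\}) > 0$ by hypothesis, the conformality relation~(\ref{conformality}) gives
\[
\mu_x(\{\eta\}) \;=\; e^{\delta\, \bs_\eta(\xo,x)}\, a \qquad \text{for every } x\in\XX.
\]
In particular $\mu_x(\{\eta\})>0$ for every $x$, so all of the equalities below will be between strictly positive real numbers and can be divided through freely.

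Next I would exploit the stabilizer hypothesis $\gamma\in\Gamma_\eta$. Since $\gamma^{-1}\eta=\eta$, $\Gamma$-equivariance of $\mu$ yields
\[
\mu_{\gamma x}(\{\eta\}) \;=\; (\gamma_* \mu_x)(\{\eta\}) \;=\; \mu_x(\gamma^{-1}\{\eta\}) \;=\; \mu_x(\{\eta\}).
\]
Rewriting both sides via the conformality formula above and dividing by $a>0$ yields $e^{\delta \bs_\eta(\xo,\gamma x)} = e^{\delta \bs_\eta(\xo,x)}$, i.e.
\[
\bs_\eta(\xo,\gamma x) \;=\; \bs_\eta(\xo,x),
\]
using $\delta>0$ to cancel the exponents.

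Finally, I would invoke the Busemann cocycle identity (\ref{cocycle}), $\bs_\eta(\xo,\gamma x) = \bs_\eta(\xo,x) + \bs_\eta(x,\gamma x)$, to conclude $\bs_\eta(x,\gamma x)=0$ for every $x\in\XX$. There is no real obstacle here: the argument is a direct two-line computation whose only ingredients are the definition of a conformal density and the cocycle property of the Busemann function; the positivity assumptions $\delta>0$ and $\mu_\xo(\{\eta\})>0$ are what make the cancellations legitimate.
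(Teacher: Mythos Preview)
Your proof is correct and follows essentially the same approach as the paper: combine $\Gamma$-equivariance with conformality on the atom $\{\eta\}$ to obtain $e^{\delta\bs_\eta(x,\gamma x)}=1$, and then use $\delta>0$. The only cosmetic difference is that the paper computes $\mu_{\gamma x}(\{\eta\})/\mu_x(\{\eta\})=e^{\delta\bs_\eta(x,\gamma x)}$ directly (using conformality between $x$ and $\gamma x$), whereas you route both measures through the base point $\xo$ and then invoke the cocycle identity~(\ref{cocycle}) at the end; these are the same computation arranged in a different order.
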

\begin{proof} By $\Gamma$-equivariance and conformality~(\ref{conformality}) of $\mu$ we have
for all $x\in\XX$ and $\gamma\in\Gamma_\eta$
$$ 1=\frac{\mu_{x}(\gamma^{-1}\eta)}{\mu_x(\eta)}=\frac{\mu_{\gamma x}(\eta)}{\mu_x(\eta)}=\e^{\delta \bs_\eta (x,\gamma x)},$$
hence $\delta>0$ implies $\bs_\eta(x,\gamma x)=0$.\end{proof} 

\begin{lemma}\label{convatom}
Let $\eta$ be a point mass for  $\mu$, and $G\subset\Gamma$ a system of coset representatives 
for $\Gamma/\Gamma_\eta$.
Then the sum
$$ \sum_{g\in G} \e^{\delta \bs_\eta(\xo,g^{-1}\xo)}$$ 
converges. 
\end{lemma}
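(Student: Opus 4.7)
My plan is to reduce the convergence of the sum to a direct computation using conformality and $\Gamma$-equivariance of $\mu$, together with the fact that $\mu_\xo$ is a finite measure and the orbit $\{g\eta : g \in G\}$ consists of pairwise distinct points in $\rand$.

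First, set $m := \mu_\xo(\{\eta\}) > 0$ by hypothesis. For each $g \in G$, I compute $\mu_\xo(\{g\eta\})$ in two ways. On one hand, the equivariance relation $\mu_{\gamma x}(E) = \mu_x(\gamma^{-1}E)$ applied with $\gamma = g^{-1}$, $x = \xo$, and $E = \{\eta\}$ gives
\[
\mu_{g^{-1}\xo}(\{\eta\}) = \mu_\xo(g\{\eta\}) = \mu_\xo(\{g\eta\}).
\]
On the other hand, the conformality relation (\ref{conformality}) applied at the point $\eta \in \supp(\mu_\xo)$ and at $x = g^{-1}\xo$ gives
\[
\mu_{g^{-1}\xo}(\{\eta\}) = e^{\delta \bs_\eta(\xo, g^{-1}\xo)} \mu_\xo(\{\eta\}) = e^{\delta \bs_\eta(\xo, g^{-1}\xo)}\, m.
\]
Combining the two identities yields $\mu_\xo(\{g\eta\}) = e^{\delta \bs_\eta(\xo, g^{-1}\xo)}\, m$ for every $g \in G$.

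The second step is to use that $G$ is a system of coset representatives for $\Gamma/\Gamma_\eta$: if $g, g' \in G$ give $g\eta = g'\eta$, then $g^{-1}g' \in \Gamma_\eta$, forcing $g = g'$. Hence the points $g\eta$, $g \in G$, are pairwise distinct elements of $\rand$. Consequently the singletons $\{g\eta\}$ are pairwise disjoint Borel sets, and additivity of the finite measure $\mu_\xo$ gives
\[
\sum_{g \in G} \mu_\xo(\{g\eta\}) \;\le\; \mu_\xo(\rand) = 1.
\]
Substituting the formula from the first step and dividing through by $m > 0$ produces
\[
\sum_{g \in G} e^{\delta \bs_\eta(\xo, g^{-1}\xo)} \;\le\; \frac{1}{m} \;<\; \infty,
\]
which is the desired convergence.

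There is no real obstacle here; the only points requiring care are the correct bookkeeping of which point the Busemann function is centered at (the exponent $\bs_\eta$, not $\bs_{g^{-1}\eta}$, arises precisely because conformality is applied with fixed boundary point $\eta$ and variable basepoint $g^{-1}\xo$), and the use of the coset-representative condition to guarantee that the measure of the orbit splits as a sum over disjoint atoms. Note that the hypothesis $\delta > 0$ is essential to extract information from the exponential factor; the proof does not use the rank one structure at all, only the defining properties of a conformal density and finiteness of $\mu_\xo$.
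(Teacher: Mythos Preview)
Your proof is correct and follows essentially the same approach as the paper: use $\Gamma$-equivariance and conformality of $\mu$ to identify each term $\e^{\delta\bs_\eta(\xo,g^{-1}\xo)}$ with $\mu_\xo(\{g\eta\})/\mu_\xo(\{\eta\})$, then bound the sum of these disjoint atoms by $\mu_\xo(\rand)<\infty$. One small remark: your closing comment that ``the hypothesis $\delta>0$ is essential'' is not quite right for \emph{this} lemma---the argument goes through verbatim for any $\delta\ge 0$; the positivity of $\delta$ is needed elsewhere (e.g.\ in Lemma~\ref{stabi} and Proposition~\ref{atomicpart}), not here.
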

\begin{proof} If $g$ and $g'$ are distinct elements in $G$, then $g\eta\neq g'\eta$ by definition of $G$ and therefore
$$\sum_{g\in G} \mu_\xo(\{ g\eta\})\le \mu_\xo(\rand)<\infty. $$ 
We conclude by conformality~(\ref{conformality}) that\\[3mm]
$\hspace*{2.0cm}\displaystyle \sum_{g\in G} \e^{\delta \bs_\eta(\xo,g^{-1}\xo)}=\sum_{g\in G} \frac{\mu_{g^{-1}\xo}(\eta)}{\mu_\xo(\eta)}=\frac1{\mu_\xo(\eta)}\sum_{g\in G} \mu_{\xo}(\{ g\eta\})<\infty.$
\end{proof}

\begin{proof}[Proof of Proposition~\ref{atomicpart}.]  Let $\eta\in\radlim$ be a point mass for $\mu$. We are going to construct an infinite set $G\subset \Gamma$ of coset 
representatives for $\Gamma/\Gamma_\eta$ such that
$$\sum_{g\in G} \e^{\delta \bs_\eta(\xo,g^{-1}\xo)}$$ diverges. This gives a contradiction to Lemma~\ref{convatom} and thus proves the claim.

Since $\eta\in\radlim$ there exist $c>0$ and $(\gamma_j)\subset\Gamma$ \st  $\eta\in\pr_\xo(B_{\gamma_j\xo}(c))$ for all $j\in\NN$. Considering a geodesic ray emanating from $\xo$
which intersects $B_{\gamma_j\xo}(c)$ non-trivially we get  
$$ \bs_{\eta}(\xo,\gamma_j\xo)\ge d(\xo,\gamma_j\xo)-2c.$$
Hence passing to a subsequence if necessary we may assume that $\bs_{\eta}(\xo,\gamma_j\xo)$ is strictly increasing as $j$ tends to infinity. We claim that 
$G:=\{\gamma_j^{-1} \colon j\in\NN\}\subset\Gamma$ is the desired set.
Indeed, assume that there exist two distinct elements $\gamma_j^{-1},\gamma_l^{-1}\in G$  which represent the same coset in $\Gamma/\Gamma_\eta$. Then $\gamma_l\gamma_j^{-1}\in\Gamma_\eta$, and by 
Lemma~\ref{stabi} (with $x=\gamma_j\xo$)
$$0=\bs_{\eta}(\gamma_j\xo, (\gamma_l\gamma_j^{-1})\gamma_j\xo)=\bs_{\eta}(\gamma_j\xo, \gamma_l\xo).$$ 
From the cocycle identity of the Busemann function we conclude
$$\bs_{\eta}(\xo,\gamma_j\xo)=\bs_\eta(\xo, \gamma_j\xo)+\bs_\eta(\gamma_j\xo,\gamma_l\xo) \stackrel{(\ref{cocycle})}{=}\bs_\eta(\xo,\gamma_l\xo),$$
which contradicts the choice of our sequence $(\gamma_j)\subset\Gamma$. \end{proof} 
 
\section{Dynamical properties of the geodesic flow}\label{BMmeasure}

In this section we keep the previous notation. In particular, for $(\xi,\eta)\in\joinrand$ we recall that 
$\Pi^{-1}(\xi,\eta)$ is the set of vectors $v\in S\XX $ with negative end point $v^-=\xi$ and positive end point $v^+=\eta$. The set of foot points of such vectors 
$p(\Pi^{-1}(\xi,\eta))=(\xi\eta)\simeq C_{(\xi\eta)}\times\RR$  is a totally geodesic submanifold of $\XX$ and hence inherits the volume element from $\XX$.  We will denote this volume element $\vol_{(\xi\eta)}$ or sometimes simply $\vol$ 
 without making explicit reference to the submanifold $(\xi\eta)$. Similarly, the closed convex set $C_{(\xi\eta)}\subset\XX$ can be endowed with the volume element  induced from $\XX$ which we will denote by $\vol_{(\xi\eta)}^\perp$ or $\vol^\perp$. With Lebesgue measure  $\lambda$ on $\RR$ we clearly have
 \[ \vol_{(\xi\eta)}= \vol_{(\xi\eta)}^\perp\otimes \lambda.\]

Let $\mu$ be a  $\delta(\Gamma)$-dimensional  $\Gamma$-invariant conformal density   on $\rand$ normalized such that  $\mu_\xo$ is a probability measure. As in  Knieper's paper \cite{MR1652924}  we define a measure on the unit tangent bundle $S\XX$ of $\XX$ in the following way: For a Borel subset $E\subseteq S\XX$ we set
\begin{equation}\label{measuredef}
 m(E):=\int_{\joinrand} \d \mu_\xo(\xi)
\d \mu_\xo(\eta) \e^{2 \delta(\Gamma) \Gr_\xo(\xi,\eta)}\, \vol_{(\xi\eta)}\big(p(\Pi^{-1}(\xi,\eta)\cap E)\big),
\end{equation}
where %$\vol=\vol_{(\xi\eta)}$  denotes the induced volume element on the totally geodesic submanifolds $(\xi\eta)$ consisting of parallel geodesic lines, and 
$\Gr_\xo$ is the Gromov product~(\ref{GromovProd}) 
with respect to $\xo$ defined in Section~\ref{prelim}. This measure $m$ is obviously invariant by the geodesic flow; moreover, the conformality (\ref{conformality}) and the $\Gamma$-equivariance 
of $\mu$ imply that the measure $m$ does not depend on $\xo$ and is $\Gamma$-invariant. Hence it descends to a geodesic flow invariant measure $m_\Gamma$ on the 
unit tangent bundle $SM$ of the quotient $M=\XX/\Gamma$. In the following we denote $(g_\Gamma^t)_{t\in\RR}$  the geodesic flow of  $S\XX/\Gamma=SM$.

 In the sequel, $(\Omega,G,\nu)$ will be one of the following four dynamical systems (with the measure class of $\nu$
 invariant by  $G$ and with a Haar measure $\d g$ on $G$): 
 \[(\rand,\Gamma,\mu_\xo),\ (\joinrand, \Gamma, (\mu_\xo\otimes \mu_\xo)|_{\joinrand}),\ (\rand\times\rand, \Gamma, \mu_\xo\otimes\mu_\xo)\ \ \text{or }\; 
  (SM, (g_\Gamma^s)_{s\in\RR}, m_\Gamma).
 \]
A Borel set $E\subseteq \Omega$ is called {\hl wandering} if $\int_G \chi_{E} (g\omega)dg<\infty$  for $\nu$-a.e. $\omega\in E$ and {\hl  recurrent} otherwise. E.~Hopf's decomposition 
theorem (see for instance  \cite[p.~17]{MR797411}) asserts that the phase space $\Omega$  decomposes (uniquely up to sets of measure zero) into a disjoint union of $G$-invariant Borel sets $\Omega_D:=\cup_{k\in \ZZ}\Omega^k_D$   and $\Omega_C$ with $\Omega^k_D$  maximal wandering. $\Omega_D$ is called the {\hl dissipative part} and     $\Omega_C$ the  {\hl conservative part} of $\Omega$. The dynamical system is called {\hl (completely) conservative} if the dissipative part $\Omega_D$ has measure zero, and {\hl (completely) dissipative} if the conservative part $\Omega_C$ has measure zero.  
We prove that in our geometric context {\it either} complete conservativity {\it or} complete dissipativity occurs for the  Patterson-Sullivan measure $m_\Gamma$ on $SM=S\XX/\Gamma$.

It is easy to see how the 
following statement  can be deduced 
from the definition of the radial limit set~(\ref{radlimset}): 
\begin{lemma}\label{boundtang}\ 
For $ u\in S\XX$ we have
$u^+\in \radlim$ if and only if there exists a compact set $K\subset S\XX$ \st $\int_0^\infty \chi_{\Gamma K}( g^t u) \, \d t=\infty$. Hence $(SM, (g^s_\Gamma)_{s\in\RR}, m_\Gamma)$ 
is completely conservative if and only if $\mu_\xo(\radlim)=1$, and completely dissipative if and only if $\mu_\xo(\radlim)=0$.
\end{lemma}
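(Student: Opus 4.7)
The lemma has two halves: first, the characterization of the forward orbit of $u$ returning to a compact set via the end point $u^+$; second, translating this characterization into the conservativity/dissipativity statement for $(SM,(g_\Gamma^s),m_\Gamma)$. I would write both halves separately.

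\medskip
\noindent\emph{Equivalence between $u^+\in \radlim$ and $\int_0^\infty \chi_{\Gamma K}(g^tu)\,\d t=\infty$.} Write $x:=p(u)$. For ($\Leftarrow$), pick compact $K$ with $\int_0^\infty \chi_{\Gamma K}(g^tu)\,\d t=\infty$; choose $c_K>0$ with $p(K)\subset B_\xo(c_K)$, and extract a sequence $t_j\to\infty$ together with $\gamma_j\in\Gamma$, $v_j\in K$ such that $g^{t_j}u=\gamma_j v_j$, hence $d(p(g^{t_j}u),\gamma_j \xo)<c_K$. Since $d(\xo,\gamma_j \xo)\ge t_j-d(\xo,x)-c_K\to\infty$, it suffices to note that the two asymptotic rays $\sigma_{\xo,u^+}$ and $\sigma_{x,u^+}(t)=p(g^tu)$ have distance function bounded by $d(\xo,x)$ (standard convexity in non-positive curvature), so there exist $s_j\to\infty$ with $d(\sigma_{\xo,u^+}(s_j),\gamma_j \xo)<c_K+d(\xo,x)=:c'$. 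Thus $u^+\in\pr_\xo(B_{\gamma_j\xo}(c'))$ for infinitely many $j$, which by (\ref{radlimr}) and (\ref{radlimset}) gives $u^+\in \Lim(c')\subset\radlim$. Conversely, if $u^+\in\radlim$, the same asymptoticity argument converts a sequence of deep excursions $d(\sigma_{\xo,u^+}(s_j),\gamma_j\xo)<c$ with $d(\xo,\gamma_j\xo)\to\infty$ into times $t_j\to\infty$ with $p(g^{t_j}u)\in B_{\gamma_j\xo}(c+d(\xo,x))$. Taking $K:=\{v\in S\XX:d(pv,\xo)\le c+d(\xo,x)\}$, which is compact, yields $g^{t_j}u\in\gamma_jK\subset\Gamma K$; by continuity of the flow each $t_j$ is surrounded by an interval of uniform positive length on which $g^tu$ stays in a slightly enlarged $\Gamma K'$, and after passing to a subsequence these intervals are pairwise disjoint, giving $\int_0^\infty \chi_{\Gamma K'}(g^tu)\,\d t=\infty$.

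\medskip
\noindent\emph{From the equivalence to the conservativity statement.} The set $\Omega_r:=\{u\in S\XX:u^+\in\radlim\}$ is $\Gamma$-invariant (as $\radlim$ is) and geodesic-flow invariant (since $(g^tu)^+=u^+$), hence descends to a Borel set $SM_r\subset SM$. By the first half, $SM_r$ consists (up to $m_\Gamma$-null sets) exactly of those vectors $u_\Gamma$ whose forward orbit returns to some compact set of $SM$ with total time infinite; since $SM$ is $\sigma$-compact and $m_\Gamma$ is Radon, Hopf's decomposition identifies $SM_r$ (mod nullsets) with the conservative part of the dynamical system. From the definition (\ref{measuredef}) of $m$ one computes
\[
m(\Omega_r)=\int_{\joinrand\cap(\rand\times\radlim)}\e^{2\delta(\Gamma)\Gr_\xo(\xi,\eta)}\,\vol_{(\xi\eta)}\big(p(\Pi^{-1}(\xi,\eta))\big)\,\d\mu_\xo(\xi)\,\d\mu_\xo(\eta),
\]
and analogously for $m(S\XX\setminus \Omega_r)$ with $\radlim$ replaced by $\radlim^c$. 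When $\mu_\xo(\radlim)=0$, the first integral vanishes, so $m_\Gamma(SM_r)=0$ and the flow is completely dissipative; when $\mu_\xo(\radlim)=1$ the second integral vanishes, so $m_\Gamma(SM\setminus SM_r)=0$ and the flow is completely conservative. By Proposition~\ref{ergodicity} applied to the $\Gamma$-invariant Borel set $\radlim\subset\radlim$, only these two values $0$ and $1$ occur, which gives the dichotomy as stated.

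\medskip
\noindent\emph{Anticipated main obstacle.} The geometric part is essentially a quantitative version of the fact that asymptotic rays stay at bounded distance; this is routine in non-positive curvature and should cause no real trouble. The point I would spend most care on is verifying that the $m$-measure formula for $\Omega_r$ is legitimately reduced to the $\mu_\xo$-measure of $\radlim$: one must observe that the remaining factor $\e^{2\delta\Gr_\xo}\vol_{(\xi\eta)}(p(\Pi^{-1}(\xi,\eta)))$ is, for each $(\xi,\eta)\in\joinrand$, strictly positive (it contains at least an entire line), so that the sign of $m(\Omega_r)$ is controlled by the measure of $\{\eta\in\radlim\}$ alone, and the Hopf decomposition correspondence goes through cleanly.
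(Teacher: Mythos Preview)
The paper gives no proof of this lemma at all; it merely states that the result ``is easy to see \ldots\ from the definition of the radial limit set~(\ref{radlimset})'' and moves on. Your write-up therefore cannot be compared against the paper's argument, but it is a correct and complete way to fill in what the authors left implicit.

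Two small remarks. First, in the forward direction of the geometric equivalence you pass from the sequence of hitting times $t_j$ to the divergence of the integral by thickening $K$ to $K'$; this is fine, but note you could also avoid the thickening entirely by taking $K$ from the start to be $S\overline{B_\xo(c+d(\xo,x))}$: since the geodesic through $p(g^{t_j}u)$ has unit speed, it automatically spends time at least $1$ (say) in $\gamma_j K$ once you know $d(p(g^{t_j}u),\gamma_j\xo)<c+d(\xo,x)-1$. Second, the sentence ``Hopf's decomposition identifies $SM_r$ (mod nullsets) with the conservative part'' is doing real work and deserves one line of justification. The cleanest route is the standard characterization: for a $\sigma$-finite measure-preserving flow and any strictly positive $f\in L^1(m_\Gamma)$ (for instance the function $\rho$ introduced later in Section~\ref{HopfArgument}), one has $\Omega_C=\{u:\int_0^\infty f(g_\Gamma^t u)\,\d t=\infty\}$ up to a null set. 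If $u^+\in\radlim$ your first half gives a compact $K'$ on which $f$ is bounded below, forcing $u\in\Omega_C$; conversely Poincar\'e recurrence on $\Omega_C$ shows that almost every $u\in K\cap\Omega_C$ returns to the compact $K$ with infinite total time, hence $u^+\in\radlim$. With this supplied, the rest of your argument (the $m$-integral reduction and the appeal to Proposition~\ref{ergodicity}) is exactly right.
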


It is one part of HTS dichotomy that $\Gamma$ is convergent if and only if $\mu_{\xo}(\radlim)=0$. The ``only if" part is Lemma~\ref{convseries};
the ``if'' part is more intricate for it needs a control of the multiplicity of the covering of the radial limit set by the non-increasing family $\Lim(c)$ defined in (\ref{radlimr}). The control is given  
 by  inequalities~(\ref{upbound}) and~(\ref{lowbound}) below. Unfortunately, due to the fact that the dimension of the submanifolds $(\xi\eta)$ can vary even if
$\xi$ and $\eta$ are in a small neighborhood of the repulsive respectively attractive fixed point of a weak rank one isometry, we need to impose the existence of a {\hl strong} rank one isometry in $\Gamma$ in order to get the lower bound~(\ref{lowbound}). 
For the proof of  Proposition~\ref{divseries} below -- which by Lemma~\ref{boundtang} implies the ``'if'' part -- we follow Roblin's exposition and therefore need to construct a 
compact Borel set $K_\Gamma$  satisfying the 
inequalities~(\ref{upbound}) and~(\ref{lowbound}) below.  This set will then  be recurrent (and of positive measure) if we assume $\Gamma$ to be divergent.

In order to construct such  a set $K_\Gamma$, we first define for $x\in \XX$ and $c>0$
\begin{equation}\label{K0def} K^0(x,c):=\{ v\in S\XX\colon  d(x,pv)<c\quad\text{and }\ pv\in H^0(v(x;v^-,v^+))\},\end{equation}
with $H^0$ as defined in~(\ref{Cxieta}), and $v(x;\xi,\eta)$ as in (\ref{projxxieta})  is the unique element $v\in S\XX$ whose foot point $pv$ is the orthogonal projection of $\xo$ to $(\xi\eta)$ and such that  $\Pi(v)=(\xi,\eta)$.
So $K^0(x,c)$ consists of all vectors $v\in SB_x(c)$  with foot point in the isometric image of  $C_{(v^-v^+)}$ passing through the orthogonal projection of $x$ to $(v^-v^+)$. In order to establish~(\ref{lowbound}), we need to ``thicken" these sets by the geodesic flow and consider
\begin{equation}\label{Kdef} K(x,c):=\{ g^s v\colon  v\in K^0(x,c),\ s\in (-c,c)\}\subseteq S B_x(2c) .\end{equation}
Such a set
 has the important property that the orbit of an arbitrary vector $v\in S\XX$ under the geodesic flow either does not intersect 
 it at all or spends precisely time $2c$ inside. 
In order to make the exposition of the proof of Proposition~\ref{divseries} below more transparent, we first state a few necessary and easy geometric estimates concerning the sets $K(x,c)$. 

For $c>0$ fixed we abbreviate $K:=\overline{ K(\xo,c)}$ the closure of $K(\xo, c)$ in $S\XX$ and 
 consider intersections of the form 
 \[K\cap g^{-t}\gamma K\qquad\text{and}\quad K\cap g^{-t}\gamma K\cap g^{-s-t}\varphi K\] 
in $ S\XX$ with $t,s>0$ and $\gamma,\varphi\in \Gamma$. As a  
direct consequence of the triangle inequality we obtain from    
\[K\cap g^{-t}\gamma K\ne \emptyset\]
the estimate  
\begin{eqnarray}\label{IT1} |d(\xo,\gamma\xo)-t|\leq 4c. \end{eqnarray}
 In the same manner, if 
\[K\cap g^{-t}\gamma K\cap g^{-s-t}\varphi K\ne \emptyset,\] then from (\ref{IT1}) we deduce that
\begin{eqnarray}\label{IT2} 0\leq d(\xo,\gamma\xo)+d(\gamma\xo,\varphi\xo)-d(\xo,\varphi\xo)\leq 12c.\end{eqnarray}

Moreover, we have the following relation between the open sets~(\ref{Kdef}) and the sets ${\mathcal L}_{c,c}(\xo,\gamma\xo)$ introduced in~(\ref{Lrc}).
\begin{lemma}\label{K0calL} 
\[\Pi\big(\{ K(\xo,c)\cap g^{-t}\gamma K(\xo,c)\colon t>0\}\big)={\mathcal L}_{c,c}(\xo,\gamma\xo)\]
\end{lemma}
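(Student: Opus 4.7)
The plan is to prove the two inclusions separately; in both directions the key observation is that if $v$ lies in $K(\xo,c) \cap g^{-t}\gamma K(\xo,c)$, then $v$, the ``unshifted'' representative $w$ (with $v = g^s w$), and $\gamma w'$ (with $g^t v = \gamma g^{s'} w'$) all share the common underlying geodesic $\sigma_v$, and the footpoints $pw$, $\gamma(pw')$ lie on $\sigma_v$ at specific parameter values determined by $s$, $s'$, $t$.

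For $\subseteq$, I would take $v \in K(\xo,c)\cap g^{-t}\gamma K(\xo,c)$ for some $t>0$ and unwind the definitions to get $v = g^s w$ with $w \in K^0(\xo,c)$, $|s|<c$, and $\gamma^{-1}g^{t}v = g^{s'}w'$ with $w' \in K^0(\xo,c)$, $|s'|<c$. Commuting isometries with the geodesic flow yields $w = g^{-s}v$ and $\gamma w' = g^{t-s'}v$, so on $\sigma_v$ the footpoints are $pw = \sigma_v(-s) \in B_\xo(c)$ and $\gamma(pw') = \sigma_v(t-s') \in B_{\gamma\xo}(c)$. Setting $x' := pw$ and $y' := \gamma(pw')$, the unique geodesic $\sigma_{x',y'}$ is a reparametrized segment of $\sigma_v$. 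Positivity of $t$ together with the $H^0$-horosphere constraints in $K^0$ -- which pin $pw$ and $pw'$ to the horosphere slices through the orthogonal projections of $\xo$ and $\gamma\xo$ onto $(\xi\eta)$ respectively -- ensures that $x'$ precedes $y'$ in the orientation of $v^+$. Hence $\sigma_{x',y'}(-\infty)=v^-=\xi$ and $\sigma_{x',y'}(+\infty)=v^+=\eta$, so $(\xi,\eta)\in{\mathcal L}_{c,c}(\xo,\gamma\xo)$.

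For $\supseteq$, I would take $(\xi,\eta) \in {\mathcal L}_{c,c}(\xo,\gamma\xo)$ with witnesses $x' \in B_\xo(c)$, $y' \in B_{\gamma\xo}(c)$ and $\sigma := \sigma_{x',y'}$. Working in the product decomposition $(\xi\eta)\simeq C_{(\xi\eta)}\times\RR$ I would construct $v := \sigma'(\tau)$ for the unique $\tau$ such that $pv = \sigma(\tau)$ lies on the horosphere slice $H^0(v(\xo;\xi,\eta))$, i.e.\ at the $\RR$-coordinate of the orthogonal projection of $\xo$ onto $(\xi\eta)$. The CAT(0) Pythagorean inequality applied to $x' \in B_\xo(c)$ yields $d(\xo,pv)<c$, so $v \in K^0(\xo,c) \subseteq K(\xo,c)$. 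Setting $t$ equal to the $\RR$-level difference between the orthogonal projections of $\xo$ and $\gamma\xo$ onto $(\xi\eta)$, the symmetric argument on the $\gamma\xo$ side produces a displacement $s' \in (-c,c)$ realizing $\gamma^{-1}g^t v$ as an element of $g^{s'}K^0(\xo,c) \subseteq K(\xo,c)$; positivity $t>0$ follows from the fact that $x'$ precedes $y'$ along $\sigma$ in the $\eta$-direction, combined with the Busemann-preserving character of the orthogonal projection onto the flat.

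The main obstacle lies in the reverse inclusion when $\dim C_{(\xi\eta)}>0$: the two orthogonal projections of $\xo$ and $\gamma\xo$ typically sit on different parallel geodesics of $(\xi\eta)$, so no single vector with $pv$ at one projection point satisfies the $K$-condition relative to the other base point. The resolution uses the flexibility of $K^0$, whose definition only pins $pv$ to a full horosphere slice of dimension $\dim C_{(\xi\eta)}$ rather than to a specific projection point, together with the $K$-thickening $\{g^s w \colon |s|<c\}$ to absorb residual $\RR$-level discrepancies. A secondary subtlety in the forward inclusion is the orientation requirement $t + s - s' > 0$ for membership in ${\mathcal L}_{c,c}$; while $t > 2c$ would immediately guarantee this, the general case $t>0$ requires invoking the $H^0$-alignment to conclude that $\gamma\xo$'s $\RR$-projection lies sufficiently far forward of $\xo$'s whenever the intersection is non-empty.
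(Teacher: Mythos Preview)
Your approach matches the paper's: unwind the definition of $K$ for ``$\subseteq$'', and for ``$\supseteq$'' place $v$ on the witness geodesic $\sigma=\sigma_{x',y'}$ at the horospherical level of the projection of $\xo$. One correction: the ``main obstacle'' you describe is not one. Because you work on the single geodesic $\sigma$ (not on the two parallel lines through $p\,v(\xo;\xi,\eta)$ and $p\,v(\gamma\xo;\xi,\eta)$), the point $pv$ is simultaneously the orthogonal projection of $\xo$ onto $\sigma$, and $p(g^t v)$ with $t$ equal to the $\RR$-level difference is the orthogonal projection of $\gamma\xo$ onto $\sigma$; the flat-strip argument (that the projection of a point onto a line of the product has the same $\RR$-coordinate as its projection onto the whole flat) gives both $H^0$-memberships directly. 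No displacement $s'$ and no $K$-thickening is needed in ``$\supseteq$'': the paper actually lands in $K^0(\xo,c)\cap g^{-\tau}\gamma K^0(\xo,c)$.

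The orientation subtlety you flag is real, but your proposed fix via $H^0$-alignment does not work. The $H^0$ conditions determine $-s$ and $t-s'$ as the $\RR$-levels $\alpha,\beta$ of the two projections along $\sigma_v$; the constraints $|\alpha|<c$ and $\beta>-c$ (coming from $|s|,|s'|<c$ and $t>0$) do not force $\alpha<\beta$. The same issue recurs in ``$\supseteq$'': from $x'$ preceding $y'$ one only deduces $\beta-\alpha>-2c$, not $\tau=\beta-\alpha>0$. The paper glosses over both points; the statement as written can fail when $d(\xo,\gamma\xo)$ is small compared to $c$, but it is only invoked later for $d(\xo,\gamma\xo)>4c$, where both orientation claims are automatic.
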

\begin{proof} The inclusion ``$\subseteq$" follows from the first condition in the  definition~(\ref{K0def}) of the sets $K^0(x,c)$: If 
 $w\in  K(\xo,c)\cap g^{-t}\gamma K(\xo,c)$, then the geodesic 
 $\sigma_w$ satisfies
$d(\xo,\sigma_w(s))<c$  for some $s\in (-c,c)$, and $d(\gamma\xo, \sigma_w(t))<c$ for some $t>0$. Hence
\begin{eqnarray}\label{INC1} (w^-,w^+)\in{\mathcal L}_{c,c}(\xo,\gamma\xo).\end{eqnarray}

Conversely, if $(\xi,\eta)\in {\mathcal L}_{c,c}(\xo,\gamma\xo)$, there exists a geodesic $\sigma$ with $\sigma(-\infty)=\xi$, $\sigma(\infty)=\eta$ which first intersects $B_\xo(c)$ and then $B_{\gamma\xo}(c)$. Denote $v\in S\XX$ the unique vector such that $pv$ is the orthogonal projection of $\xo$ to $\sigma$, and $\sigma_v=\sigma$. Notice that if $\sigma$ is not a strong rank one geodesic, then $v$ is in general  different
from the orthogonal projection $v(\xo;\xi,\eta)$ to the whole set $(\xi\eta)$. Next let $\tau>0$ \st $\sigma_v(\tau)=p(g^\tau v)$ is the orthogonal projection of $\gamma\xo$ to the geodesic $\sigma$. In particular, we have
\[ d(\xo, pv)<c\quad\text{and}\quad d(\gamma\xo, p(g^\tau v))<c.\]
Moreover, since the geodesics determined by $v$ and $v(\xo;\xi,\eta)$ span a flat strip, and $pv$, $v(\xo;\xi,\eta)$ are orthogonal projections of the same point $\xo$, we have
\[  \bs_\xi(p v(\xo;\xi,\eta),pv)=\bs_\eta(pv(\xo;\xi,\eta),pv)=0.\]
This implies  $pv\in H^0(v(\xo;\xi,\eta))$ and $p(g^\tau v)\in H^\tau(v(\xo;\xi,\eta))= H^0(v(\gamma\xo;\xi,\eta))$, hence
\[ v\in K^0(\xo,c)\cap g^{-\tau}\gamma K^0(\xo,c)\subseteq K(\xo,c)\cap g^{-\tau}\gamma K(\xo,c)\]
for the particular $\tau>0$ from above.\end{proof}

As a direct consequence we obtain that for all $t,s>0$ and all $\gamma,\varphi\in \Gamma$ 
\begin{eqnarray}\label{INC2} \Pi\big(K\cap g^{-t}\gamma K\cap g^{-t-s}\varphi K\big)\subseteq
{\mathcal L}_{2c,2c}(\xo,\varphi\xo).\end{eqnarray}

Finally we remark that  if  $(\xi,\eta)\in{\mathcal L}_{2c,2c}(\xo,\varphi\xo)$, 
then there exists $z\in (\xi\eta)\cap B_{\xo}(2c)$ \st
\[ \Gr_{\xo}(\xi,\eta)=\frac12\big(\bs_\xi(\xo,z)+\bs_\eta(\xo,z)\big),\]
which immediately gives the estimate
\begin{equation}\label{GROMOV}  \Gr_{\xo}(\xi,\eta) \le 2c.\end{equation}

It remains to specify the constant $c>0$ for which the set $K:=\overline{K(o,c)}$ will satisfy the  
inequalities~(\ref{upbound}) and (\ref{lowbound}). Recall that $h\in\Gamma$ is a rank one element of $\Gamma$ and that the base point  $\xo\in\Ax(h)$ was chosen on the axis of $h$. We first fix $r=r_0>\width(h)$  and neighborhoods $U,V\subset\ganz$  of $h^-,h^+$ provided by Lemma~\ref{joinrankone} for $r_0$. Let $\Lambda\subset\Gamma$ be the finite symmetric set provided by Proposition~\ref{lrcinproduct}, set
\[ \rho:=\max\{d(\xo,\beta\xo)\colon \beta\in\Lambda\}\]
and -- with the constant $c_0>r$ from the shadow lemma Proposition~\ref{shadowlemma} -- fix
\[ c>c_0+\rho.\]
From here on 
\begin{equation}\label{Kdef}
K:=\overline{K(\xo,c)} 
\end{equation} will be defined with this constant $c$.

\begin{proposition}\label{divseries}\ 
If  $\,\Gamma$ contains a strong rank one isometry, then 
$\mu_\xo(\radlim)=0$ 
implies that 
$\ \sum_{\gamma\in\Gamma} \e^{-\delta(\Gamma) d(\xo,\gamo)}\,$ converges. 
\end{proposition}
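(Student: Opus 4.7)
I would prove the contrapositive: assume $\Gamma$ is divergent and deduce $\mu_\xo(\radlim)>0$ (which by Proposition~\ref{ergodicity} then equals $1$). Following Roblin's scheme, the central object is the compact set $K=\overline{K(\xo,c)}$ constructed just before the statement, together with the function
\[ F(v):=\int_0^\infty\chi_{\Gamma K}(g^tv)\,dt\qquad(v\in S\XX).\]
The thickening in the definition of $K$ makes the positive orbit of every $v\in S\XX$ either miss $\gamma K$ or spend exactly time $2c$ inside it, so $F(v)=2c\,N(v)$ with $N(v)\in\NN\cup\{\infty\}$ the number of visits of the positive orbit to $\Gamma K$. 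By Lemma~\ref{boundtang}, $F(v)=\infty$ iff $v^+\in\radlim$; hence it suffices to exhibit a subset of $K$ of positive $m$-measure on which $F=\infty$. At the outset I also replace $h$ (resp.~$g$) by a strong rank one isometry of $\Gamma$, so that the associated neighborhoods $U,V$ provided by Lemma~\ref{joinrankone} consist of endpoints of strong rank one geodesics only.

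For the lower bound, Fubini yields
\[ \int_K F\,dm\;=\;\sum_{\gamma\in\Gamma}\int_0^\infty m(K\cap g^{-t}\gamma K)\,dt.\]
For $\gamma$ with $d(\xo,\gamma\xo)$ large, Proposition~\ref{lrcinproduct} produces $\beta=\beta(\gamma)\in\Lambda$ with $\mathcal L_{c,c}(\xo,\beta\gamma\xo)\supseteq(U\cap\rand)\times\mathcal O^-_{c,c}(\xo,\beta\gamma\xo)$, and by Lemma~\ref{K0calL} together with (\ref{princlus}) these are precisely the endpoints of vectors in $K\cap g^{-t}\beta\gamma K$. The strong rank one hypothesis, invoked through the last sentence of Lemma~\ref{joinrankone}, forces the cross-section $C_{(\xi\eta)}$ to reduce to a single point for all such $(\xi,\eta)$, so the set $p(\Pi^{-1}(\xi,\eta)\cap K\cap g^{-t}\beta\gamma K)$ is a genuine geodesic arc whose $\vol_{(\xi\eta)}$-length is uniformly bounded below for $t$ in an interval of length $\gtrsim c$ about $d(\xo,\beta\gamma\xo)$. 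Combining the Gromov product bound~(\ref{GROMOV}), the definition~(\ref{measuredef}) of $m$, and the shadow lemma Proposition~\ref{shadowlemma} applied to $\mu_\xo\bigl(\mathcal O^-_{c,c}(\xo,\beta\gamma\xo)\bigr)$, I obtain
\[ \int_0^\infty m(K\cap g^{-t}\beta\gamma K)\,dt\;\ge\;C_1\,\e^{-\delta(\Gamma) d(\xo,\gamma\xo)} \]
with $C_1>0$ independent of $\gamma$. Summing over $\gamma$ (the factor $\beta\in\Lambda$ absorbed into uniform constants), the divergence of $\Gamma$ gives $\int_K F\,dm=+\infty$.

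For the upper bound and the contradiction, I set $E_N:=\{v\in K:F(v)\le N\}$. Under the assumption $\mu_\xo(\radlim)=0$ one has $F<\infty$ $m$-a.e. on $K$, so $m(E_N)\nearrow m(K)$, and Fubini gives
\[ \sum_{\gamma\in\Gamma}\int_0^\infty m(E_N\cap g^{-t}\gamma K)\,dt\;=\;\int_{E_N}F\,dm\;\le\;N\,m(K)\;<\;\infty.\]
The closing step---and the main obstacle---is to pass from this integrability on $E_N$ back to the full sum $\sum_\gamma\int_0^\infty m(K\cap g^{-t}\gamma K)\,dt$ by controlling the overlap $m\bigl((K\setminus E_N)\cap g^{-t}\gamma K\bigr)$. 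Here the triple-intersection estimate (\ref{IT2}) becomes decisive: nonemptyness of $K\cap g^{-t}\gamma K\cap g^{-s-t}\varphi K$ forces $d(\xo,\gamma\xo)+d(\gamma\xo,\varphi\xo)-d(\xo,\varphi\xo)\le 12c$, i.e.\ $\gamma\xo$ must lie essentially on the geodesic from $\xo$ to $\varphi\xo$. This alignment yields a quasi-independence of the thickened tubes $\{g^{-t}\gamma K\}$, and via a quantitative Borel--Cantelli argument it converts the divergence of $\int_K F\,dm$ into $m(\{F=\infty\}\cap K)>0$, contradicting $\mu_\xo(\radlim)=0$. This passage from integral divergence to pointwise divergence on a positive-measure set is the technically hardest point, and is where the rank one machinery of Section~\ref{geomesti} (Proposition~\ref{lrcinproduct}, the two-parameter shadow lemma, and the alignment estimate (\ref{IT2})) substitutes for the CAT$(-1)$ comparison geometry available in Roblin's original proof.
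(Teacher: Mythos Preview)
Your overall strategy coincides with the paper's: argue by contradiction, use the compact set $K=\overline{K(\xo,c)}$, establish a lower bound on $\sum_\gamma\int m(K\cap g^{-t}\gamma K)\,dt$ via Proposition~\ref{lrcinproduct}, the strong rank one hypothesis (to force $C_{(\xi\eta)}$ to a point), and the shadow lemma, and then invoke a quantitative Borel--Cantelli lemma fed by the triple-intersection alignment~(\ref{IT2}). Your derivation of the lower bound is correct and matches inequality~(\ref{lowbound}) in the paper (the paper arranges the $\beta$ on the other side via the inclusion $\mathcal L_{r,c}(\xo,\beta^{-1}\gamma\xo)\subseteq\beta^{-1}\mathcal L_{c,c}(\xo,\gamma\xo)$, but your version summing over $\beta(\gamma)\gamma$ works equally well since $\Lambda$ is finite).

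The gap is in your closing paragraph. The sets $E_N=\{F\le N\}$ and the inequality $\int_{E_N}F\le N\,m(K)$ are correct but lead nowhere: knowing that $F$ is integrable on each $E_N$ while $\int_K F=+\infty$ does not by itself produce a set of positive measure where $F=\infty$, and ``controlling the overlap $m((K\setminus E_N)\cap g^{-t}\gamma K)$'' has no evident connection to the Borel--Cantelli input. What is actually needed---and what the paper proves as inequality~(\ref{upbound})---is the \emph{second-moment} bound
\[
\int_0^T\!\!\int_0^T\sum_{\gamma,\varphi}m(K\cap g^{-t}\gamma K\cap g^{-t-s}\varphi K)\,dt\,ds\;\le\;C\Bigl(\sum_{d(\xo,\gamma\xo)\le T}\e^{-\delta(\Gamma)d(\xo,\gamma\xo)}\Bigr)^2,
\]
obtained by combining~(\ref{INC2}), the Gromov product bound~(\ref{GROMOV}), the shadow lemma for $\mathcal O^+_{2c,2c}(\xo,\varphi\xo)$, and then~(\ref{IT1})--(\ref{IT2}) to restrict the sum. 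Once both~(\ref{upbound}) and~(\ref{lowbound}) are in hand, the Kochen--Stone form of Borel--Cantelli gives $m_\Gamma\bigl(\{v:\int_0^\infty\chi_{K_\Gamma\cap g_\Gamma^{-t}K_\Gamma}(v)\,dt=\infty\}\bigr)>0$ directly, which via Lemma~\ref{boundtang} contradicts $\mu_\xo(\radlim)=0$. So drop the $E_N$ paragraph and replace it with the explicit second-moment estimate; you already have every ingredient for it.
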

\begin{proof}  We argue by contradiction, assuming that $\,\mu_\xo(\radlim)=0\ $ and that the series $\ \sum_{\gamma\in\Gamma} \e^{-\delta(\Gamma) d(\xo,\gamo)}$ diverges. We show that for the  compact set  $K\subset S\XX$  defined by (\ref{Kdef})
 the following inequalities hold for $T$ sufficiently  large with universal constants $C,C'>0$:

\begin{equation}\label{upbound}
\int_{0}^T \d t \int_0^T \d s \sum_{\gamma,\varphi\in\Gamma} m(K\cap g^{-t}\gamma K\cap g^{-t-s}\varphi K) \le  C
\biggl(\sum_{\begin{smallmatrix}{\scriptscriptstyle \gamma\in\Gamma}\\
        {\scriptscriptstyle d(o,\gamo)\le T}\end{smallmatrix}} \e^{-\delta(\Gamma)
    d(\xo,\gamma\xo)}\biggr)^2  
\end{equation}
\begin{equation}\label{lowbound}
\int_{0}^T \d t  \sum_{\gamma\in\Gamma} m(K\cap g^{-t}\gamma K) \ge  C' \sum_{\begin{smallmatrix}{\scriptscriptstyle\gamma\in\Gamma}\\
        {\scriptscriptstyle d(o,\gamo)\le T}\end{smallmatrix}} \e^{-\delta(\Gamma) d(\xo,\gamma\xo)}
\end{equation}

Once these inequalities are proved and under the assumption that the series $\sum_{\gamma\in\Gamma}\e^{-\delta(\Gamma)d(\xo,\gamma\xo)}$ diverges, one can apply the above mentioned 
generalization of the second Borel-Cantelli lemma and the conclusion follows as in  
\cite[p.~20]{MR2057305}: If $K_\Gamma\subset SM=S\XX/\Gamma$ denotes the projection of $K\subset S\XX$ to $S\XX/\Gamma$, then 
$$m_\Gamma\bigl(\{ v\in S M \colon \int_0^\infty \chi_{K_\Gamma\cap g^{-t}_\Gamma K_\Gamma}(v)=\infty\}\bigr)$$ 
is strictly positive, which means that the dynamical system 
$(S M, (g^s_\Gamma)_{s\in\RR}, m_\Gamma)$ is not completely dissipative. But by Lemma~\ref{boundtang} this is a contradiction to $\mu_\xo(\radlim)=0$.

We begin with the proof of (\ref{upbound}) which works even if $\Gamma$ contains only a weak rank one element; first we note   that 
$$F:=\max_{w\in  K} \bigl( \vol_{(w^-w^+)}^\perp (H^0(w) \cap p K)  \bigr)<\infty,$$ 
because $pK\subset \overline{B_\xo(2c)}$.  So if 
\[ (\xi,\eta)\in\Pi\big(K\cap g^{-t}\gamma K\cap g^{-t-s}\varphi K\big)\quad\text{for some }\   s,t>0,\]
then
\begin{align*}
&\int_0^T \d t \int_0^T \d s\; \vol_{(\xi\eta)} \big(p(\Pi^{-1}(\xi,\eta)\cap K\cap g^{-t}\gamma K\cap g^{-t-s}\varphi K)\big)\le (2c)^2\cdot F.
\end{align*}
By definition of the measure $m$, as a consequence of~(\ref{INC2}) and with the obvious relation
$${\mathcal  L}_{2c,2c}(\xo,\varphi\xo)\subseteq \rand\times {\mathcal O}^+_{2c,2c}(\xo,\varphi\xo)$$ 
we therefore get for  all 
$\gamma,\varphi\in \Gamma$ 
\begin{align*} \int_0^T \d t\int_0^T \d s  & \hspace{1mm} m(K\cap  g^{-t}\gamma K\cap g^{-t-s}\varphi K)\\
&  \le  4 c^2 F \int_{{\mathcal 
  L}_{2c,2c}(\xo,\varphi\xo)} \hspace{-1mm}\d \mu_\xo(\xi) \d \mu_\xo(\eta) \e^{2 \delta(\Gamma)
  \Gr_\xo(\xi,\eta)}  \\
& \stackrel{(\ref{GROMOV})}{ \le} 4c^2 F\cdot \e^{4c\delta(\Gamma)} \int_{{\mathcal
  L}_{2c,2c}(\xo,\varphi\xo)} \d \mu_\xo(\xi) \d \mu_\xo(\eta) \\
  &\le 4c^2 F\cdot \e^{4c\delta(\Gamma)}  \int_{\partial X}\d \mu_\xo(\xi)\int_{{\mathcal
  O}_{2c,2c}^+(\xo,\varphi\xo)}  \d \mu_\xo(\eta) \\
&= 4c^2 F\cdot \e^{4c\delta(\Gamma)}  \mu_\xo({\mathcal
  O}_{2c,2c}^+(\xo,\varphi\xo))\\
  &\le 4c^2 F\cdot \e^{4c\delta(\Gamma)}  D(2c) \cdot \e^{-\delta(\Gamma) d(\xo,\varphi\xo) },
  \end{align*}
  where we used the shadow lemma Proposition~\ref{shadowlemma} (together with Remark~\ref{dgammasmall}) in the last step.
We conclude that
\[ \int_0^T \d t\int_0^T \d s\; m(K\cap  g^{-t}\gamma K\cap g^{-t-s}\varphi K) \le C_1  \e^{-\delta(\Gamma) d(\xo,\varphi\xo) }\]
with the constant $C_1= 4c^2 F \cdot \e^{4c\delta(\Gamma)}D(2c)$ which only depends on $c$.

Making use repeatedly of (\ref{IT1}) and its consequence (\ref{IT2}), we further get
\begin{align*} \int_{0}^T \d t \int_0^T \d s &  \sum_{\gamma,\varphi\in\Gamma} m(K\cap
g^{-t}\gamma K\cap g^{-t-s}\varphi K)\\
&\quad \le   \sum_{\begin{smallmatrix}{\scriptscriptstyle\gamma,\varphi\in\Gamma}\\{\scriptscriptstyle d(o,\gamo)\le T+4c}\\{\scriptscriptstyle
          d(\gamo,\varphi\xo)\le T+4c}\end{smallmatrix}} C_1
    \e^{-\delta(\Gamma) (d(\xo,\gamo)+d(\gamo,\varphi\xo)-12c)}\\
&\quad  = C_1 \e^{12c\delta(\Gamma)} \sum_{\begin{smallmatrix}{\scriptscriptstyle\gamma,\alpha\in\Gamma}\\
        {\scriptscriptstyle d(o,\gamo)\le T+4c}\\{\scriptscriptstyle
          d(o,\alpha\xo)\le T+4c}\end{smallmatrix}}
    \e^{-\delta(\Gamma) (d(\xo,\gamo)+d(\xo,\alpha\xo))}\\
    &\quad \le  C_2 \biggl( \sum_{\begin{smallmatrix}{\scriptscriptstyle\gamma\in\Gamma}\\
        {\scriptscriptstyle d(o,\gamo)\le T+4c} \end{smallmatrix}}
    \e^{-\delta(\Gamma) d(\xo,\gamo)}\biggr)^2,\end{align*}
where  $C_2$ is again  a constant depending only on $c$.
    
Since \[\displaystyle \sum_{T<d(\xo,\gamo)\le T+4c} \e^{-\delta(\Gamma) d(\xo,\gamo)}\quad \]
is uniformly bounded in $T$ as a direct consequence of  Corollary~3.8 in 
\cite{MR2290453}, we have established (\ref{upbound}).\\

It remains to prove inequality~(\ref{lowbound}), where we will have to require that  $h$ is a strong rank one element in $\Gamma$. We recall that under this assumption (and with the notation introduced before (\ref{Kdef})) every pair of points $(\xi,\eta)\in U\times V$ can be joined by a strong rank one geodesic.

Using the definition of $m$, Lemma~\ref{K0calL}
and the  non-negativity of the Gromov product,  we first obtain for $\gamma\in\Gamma$ with $d(\xo,\gamma\xo)>4c$
\begin{align*} & \hspace{-0.3cm}\int_0^T \d t\,  m(K\cap g^{-t}\gamma K)\\ 
&=   \int_{0}^{T} \d t \int_{\joinrand} \d \mu_\xo(\xi)
\d \mu_\xo(\eta) \e^{2 \delta(\Gamma) \Gr_\xo(\xi,\eta)}\vol_{(\xi\eta)}\bigl(p(\Pi^{-1}(\xi,\eta)\cap K\cap g^{-t}\gamma K)\bigr)\\
&\ge \int_{{\mathcal 
    L}_{c,c}(\xo,\gamo)} \d \mu_\xo(\xi)\d \mu_\xo(\eta)\biggl(\int_0^T \d t\, \vol_{(\xi\eta)}\bigl(p(\Pi^{-1}(\xi,\eta)\cap K\cap g^{-t}\gamma K)\bigr)\biggr).
\end{align*}
Here the problem appears that we cannot in general uniformly  bound from below the integral \[\int_0^T \d t\, \vol_{(\xi\eta)}\big(p(\Pi^{-1}(\xi,\eta)\cap K\cap g^{-t}\gamma K)\big),\]
since the geodesics intersecting $B_\xo(c)$ and $B_{\gamma\xo}(c)$ may belong to totally geodesic submanifolds 
$ (\xi\eta)\simeq C_{(\xi\eta)}\times \RR$ with $\vol_{(\xi\eta)}^\perp  \bigl(C_{(\xi\eta)}\bigr)$ arbitrarily small (compare our Remark~\ref{zerowidthnotregular}).
However,  we know from Lemma~\ref{joinrankone} that when restricting to pairs of points $(\xi,\eta)\in U\times V$, then every 
\[ w\in \{\Pi^{-1}(\xi,\eta)\cap K\cap g^{-t}\gamma K \colon  t>0\}\]
determines  a {\hl strong} rank one geodesic  $\sigma_w$ which spends a time $2c$ in $pK$ and later the same time $2c$ in $p(\gamma K)$. Moreover, we have 
$(\xi\eta)=\sigma_w(\RR)\simeq \RR$ (hence in particular $\vol_{(\xi\eta)}$ is Lebesgue measure $\lambda$ on $\RR$), so  if
$(\xi,\eta)\in {\mathcal L}_{c,c}(\xo,\gamma\xo)\cap (U\times V)$ and  $T>d(\xo,\gamma\xo)+4c$, then 
\[ \int_0^T \d t\, \vol_{(\xi\eta)}\big(p(\Pi^{-1}(\xi,\eta)\cap K\cap g^{-t}\gamma K)\big)= 2c,\]
Since $\Gamma$   acts by isometries, the same is true for 
\[(\xi,\eta)\in  {\mathcal L}_{c,c}(\xo,\gamma\xo)\cap \beta (U\times V)\] with $\beta\in\Gamma$ arbitrary.  We conclude that for any $\beta\in \Lambda$
\begin{align*} &\int_0^T \d t\,  m(K\cap g^{-t}\gamma K)\\ 
&\ge \int_{{\mathcal 
    L}_{c,c}(\xo,\gamo)\cap\beta (U\times V)} \d \mu_\xo(\xi)\d \mu_\xo(\eta)\underbrace{\biggl(\int_0^T \d t\, \vol_{(\xi\eta)}\big(p(\Pi^{-1}(\xi,\eta)\cap K\cap g^{-t}\gamma K)\big)\biggr)}_{= 2c}\\
&= 2c\cdot \int_{\beta\bigl( \beta^{-1} {\mathcal 
    L}_{c,c}(\xo,\gamo)\cap (U\times V)\bigr)} \d \mu_\xo(\xi)\d \mu_\xo(\eta).
\end{align*}
Finally, by Proposition~\ref{lrcinproduct} we know that for all $\gamma\in\Gamma$ with  $d(\xo,\gamma\xo)>R\,$ (with $ R>4c$ sufficiently large)
there exists an element $\beta$ in the finite set 
$\Lambda\subset\Gamma$ with the property 
$$ {\mathcal L}_{r,c}(\xo,\beta^{-1}\gamma \xo)\cap \big(U\times V\big)\supseteq (U \cap \rand) \times {\mathcal O}_{r,c}^-(\xo,\beta^{-1}\gamma\xo);$$
using (\ref{lrcinclus}), $c_0\ge r$ and $c>c_0+\rho$ we also have the inclusion
\[ {\mathcal L}_{r,c}(\xo,\beta^{-1}\gamma \xo)=\beta^{-1}{\mathcal L}_{r,c}(\beta\xo,\gamma\xo)\subseteq
\beta^{-1} {\mathcal L}_{r+\rho,c}(\xo,\gamma\xo)\subseteq \beta^{-1} {\mathcal L}_{c,c}(\xo,\gamma\xo). \]
Summarizing we obtain for all $\gamma\in\Gamma$ with $d(\xo,\gamma\xo)\in (R,T-4c)$
\begin{align*} 
\int_0^T \d t\,  m(K\cap g^{-t}\gamma K)&\ge 2c\cdot  \int_{\beta \big((U\cap \rand)\times{\mathcal O}_{r,c}^-(\xo,\beta^{-1}\gamo)\big)} \d \mu_\xo(\xi)\d \mu_\xo(\eta)\\
&= 2c  \cdot \mu_{\xo}(\beta U) \mu_\xo(\beta {\mathcal O}_{r,c}^-(\xo,\beta^{-1}\gamma\xo) )\\ 
&\ge 2c  \cdot \mu_{\xo}(\beta U) \e^{-\delta(\Gamma)d(\xo,\beta^{-1}\xo)}\mu_\xo( {\mathcal O}_{r,c}^-(\xo,\beta^{-1}\gamma\xo) )\\ 
&\ge 2c  \cdot \mu_{\xo}(\beta U) \e^{-\delta(\Gamma)d(\xo,\beta^{-1}\xo)} \cdot D(c)\e^{-\delta(\Gamma)d(
\xo,\beta^{-1}\gamma\xo)}\\ 
& \ge 2c\cdot  \min_{\beta\in\Lambda} \mu_\xo(\beta U) \cdot \e^{-2 \delta(\Gamma)\rho} D(c)  \e^{-\delta(\Gamma) d(
\xo,\gamma\xo)}\\
&= C' \e^{-\delta(\Gamma) d(\xo,\gamma\xo)}
\end{align*}
with a constant $C'$ depending only on $c$ and the fixed finite set $\Lambda\subset\Gamma$; in the last three inequalities we used the $\Gamma$-equivariance and the conformality~(\ref{conformality}) of $\mu$, the shadow lemma Proposition~\ref{shadowlemma} and the triangle inequality for the exponent.

Finally, taking the sum over all elements  $\gamma\in\Gamma$ with $d(\xo,\gamma\xo)\in (R,T-4c)$
 we get 
 $$ \int_0^T  \sum_{\gamma\in\Gamma} m(K\cap g^{-t}\gamma K)\;\d t
\ge C'
\sum_{\begin{smallmatrix}{\scriptscriptstyle\gamma\in\Gamma}\\{\scriptscriptstyle
      R< d(o,\gamo)\le T-4c}\end{smallmatrix}}
\e^{-\delta(\Gamma) d(\xo,\gamo)},$$
and inequality~(\ref{lowbound}) follows with the same argument as above, namely that 
$$\sum_{\begin{smallmatrix}{\scriptscriptstyle\gamma\in\Gamma}\\{\scriptscriptstyle
       d(o,\gamo)\le R}\end{smallmatrix}}
\e^{-\delta(\Gamma) d(\xo,\gamo)}<\infty \qquad\text{and}\quad \sum_{\begin{smallmatrix}{\scriptscriptstyle\gamma\in\Gamma}\\{\scriptscriptstyle
      T-4c< d(o,\gamo)\le T}\end{smallmatrix}}
\e^{-\delta(\Gamma) d(\xo,\gamo)}$$ 
is uniformly bounded in $T$.\end{proof}

\section{Ergodicity for divergent groups}\label{HopfArgument}

In this  section we again assume that $\Gamma\subseteq\is(\XX)$  contains a {\hl strong} rank one isometry
 $h$ and that the base point  $\xo\in \Ax(h)$ belongs to the axis of $h$. As in \cite{MR1652924} we let $d_1$ be the metric on the unit tangent bundle $S\XX$ defined by
$$ d_1(u,v):=\max \{ d(p g^t u,p g^t w) \colon 0\le t\le 1\}\ \mbox{ for} \ u,v\in S\XX .$$ 
Moreover, a vector $v\in S\XX$ is called {\hl recurrent}, if there exist sequences $(\gamma_n)\subset\Gamma$ and $(t_n)\to\infty$ \st $\gamma_n g^{t_n}v$ converges to $v$ in $S\XX$.

In order to prove ergodicity of the geodesic flow on $SM$ with respect to the measure $m_\Gamma$ we will use the famous  Hopf argument (see \cite{Hopf}, \cite{MR0284564}) as in \cite{MR2057305} and \cite{MR1652924}.
In our setting we need Knieper's Proposition~4.1 which holds only  under the stronger assumption that $\Gamma$ contains a strong rank one element. We recall only its statement here:
\begin{proposition}\label{KniepersProp}(\cite{MR1652924},  Proposition~4.1) \ 
\  Let $v\in S\XX$ be a recurrent vector defining a strong rank one geodesic $\sigma_v$ in $\XX$. Then for all $u\in S\XX$ with $u^+=v^+$ and $\bs_{v^+}(pv,pu)=0$ we have
$$ \lim_{t\to\infty} d_1(g^t v, g^tu)=0.$$
\end{proposition}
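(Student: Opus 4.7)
The plan is to argue by contradiction, using the recurrence of $v$ to force a rigidity phenomenon that the strong rank one hypothesis forbids. Set $f(t) := d(pg^tv, pg^tu)$. Since $u^+=v^+$ and $\XX$ is a Hadamard manifold, the convexity of the distance function makes $f$ convex on $\RR$, and the classical fact that asymptotic geodesics stay at bounded distance makes $f$ bounded on $\RR_+$; convexity plus boundedness forces $f$ to be non-increasing on $\RR_+$. Set $\ell := \lim_{t\to\infty} f(t)\ge 0$. The passage from $\ell=0$ to the $d_1$ statement is immediate from monotonicity:
\[ d_1(g^tv, g^tu) = \max_{s\in[0,1]} f(t+s) = f(t) \longrightarrow 0, \]
so the entire task reduces to proving $\ell=0$.

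Suppose for contradiction $\ell>0$ and let $\phi_n := \gamma_n g^{t_n}$ be the recurrent sequence with $\phi_n v\to v$. Since $\gamma_n\in\Gamma$ acts by isometries on $\XX$ and commutes with the flow, and since $\bs_{v^+}(pg^tv,pg^tu)=0$ for every $t$ by the cocycle identity~(\ref{cocycle}), the image $\phi_n u$ has positive endpoint $\gamma_n v^+ = (\phi_n v)^+\to v^+$, satisfies $\bs_{(\phi_n v)^+}(p\phi_n v,p\phi_n u)=0$, and satisfies $d(p\phi_n v,p\phi_n u)=f(t_n)\to \ell$. Hence $p\phi_n u$ stays in a bounded neighborhood of $pv$, so after extracting a subsequence $\phi_n u\to w\in S\XX$ with $w^+=v^+$, $\bs_{v^+}(pv,pw)=0$, and $d(pv,pw)=\ell$. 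For every fixed $s\in\RR$, the commutation $\phi_n g^s=g^s\phi_n$ and the isometry property of $\gamma_n$ give
\[ d(pg^sv,pg^sw) = \lim_n d(pg^s\phi_nv, pg^s\phi_nu) = \lim_n f(t_n+s) = \ell, \]
since $t_n+s\to\infty$ for every fixed $s$. Thus $\sigma_v$ and $\sigma_w$ are asymptotic geodesics at constant distance $\ell>0$.

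Now I would invoke the flat strip theorem for Hadamard manifolds: two asymptotic geodesics remaining at bounded distance bound a flat strip, whose width here is exactly $\ell$. The variation field along $\sigma_v$ inside this flat strip is a parallel Jacobi field, and it is perpendicular to $\dot\sigma_v$ because the condition $\bs_{v^+}(pv,pw)=0$ places $pw$ on the horosphere through $pv$ centered at $v^+$, so the initial variation vector is tangent to that horosphere and hence orthogonal to $v$. But this is precisely the object excluded by the strong rank one hypothesis on $\sigma_v$, a contradiction. Therefore $\ell=0$, completing the proof.

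The main obstacle lies in the middle paragraph: promoting equality at a single foot point ($d(pv,pw)=\ell$) to equality along the whole limiting geodesic pair ($d(pg^sv,pg^sw)\equiv \ell$). This requires simultaneously the commutation $\phi_n\circ g^s = g^s\circ\phi_n$, the isometric character of each $\gamma_n$, and monotonicity of $f$ at $+\infty$, and it is the constancy (not mere boundedness) of the distance that forces the flat strip to appear. Once the strip is in hand, perpendicularity of the resulting parallel Jacobi field is a clean consequence of the preserved horospherical condition, which is why the strong (rather than merely weak) rank one hypothesis on $\sigma_v$ is exactly what is needed to finish.
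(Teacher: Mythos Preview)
The paper does not give its own proof of this proposition: it explicitly says ``We recall only its statement here'' and cites Knieper \cite{MR1652924}, Proposition~4.1. So there is nothing in the paper to compare your argument against.

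That said, your argument is correct and is essentially the standard one. A couple of minor remarks. First, once you have produced the flat strip bounded by $\sigma_v$ and $\sigma_w$, the perpendicular parallel Jacobi field along $\sigma_v$ comes for free from the strip structure itself (the transverse coordinate field in $[0,\ell]\times\RR$ is parallel and orthogonal to the flow direction because the embedding is isometric and totally geodesic); the separate appeal to the horospherical condition for perpendicularity is not strictly necessary, though it does pin down that the strip width equals $\ell$ exactly. Second, the continuity you use in passing from $\bs_{(\phi_n v)^+}(p\phi_n v,p\phi_n u)=0$ to $\bs_{v^+}(pv,pw)=0$ requires joint continuity of $(\xi,x,y)\mapsto\bs_\xi(x,y)$, which does hold on Hadamard manifolds; it is worth saying so explicitly. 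With these points granted, the proof is complete and matches what one expects Knieper's argument to be.
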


\begin{proposition}\label{conservative}\ 
If $(SM, (g^t_\Gamma)_{t\in\RR}, m_\Gamma)$ is completely conservative, that is if\break $\mu_o(\radlim)=1$, 
then it is ergodic.
\end{proposition}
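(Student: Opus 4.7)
The approach is the classical Hopf argument, with Proposition~\ref{KniepersProp} playing the role usually filled by the smooth stable foliation in pinched negative curvature. The strategy is to show that the Birkhoff time average of any continuous compactly supported function is constant $m_\Gamma$-a.e., from which ergodicity follows by standard measure theory. First I would fix $f\colon SM\to\RR$ continuous with compact support and lift to $\tilde f\colon S\XX\to\RR$ which is $\Gamma$-invariant. By conservativity and the Birkhoff--Hopf ergodic theorem, for $m$-a.e. $v$ the time averages
\begin{equation*}
f^{\pm}(v):=\lim_{T\to+\infty}\frac{1}{T}\int_0^T \tilde f(g^{\pm t}v)\,\d t
\end{equation*}
exist, are invariant under both the geodesic flow and $\Gamma$, and satisfy $f^+=f^-$ $m$-a.e. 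Moreover, since $\mu_\xo(\radlim)=1$, Lemma~\ref{boundtang} (applied to both time directions) forces $m$-a.e.\ $v$ to be recurrent in the sense of Proposition~\ref{KniepersProp}.

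Next I would verify that $m$-a.e.\ $v$ is \emph{strong} rank one. By Lemma~\ref{joinrankone} applied to the axis of the strong rank one isometry $h$, there exist disjoint open sets $U\ni h^-$, $V\ni h^+$ in $\ganz$ such that every geodesic joining a point of $U$ to a point of $V$ is strong rank one; the $\Gamma$-invariant open subset $W\subset\joinrand$ consisting of such endpoint pairs contains $(U\cap\rand)\times(V\cap\rand)$, and by Lemma~\ref{openposmass} both $\mu_\xo(U\cap\Lim)$ and $\mu_\xo(V\cap\Lim)$ are positive. Because $m$ is mutually absolutely continuous with respect to $\d\mu_\xo\otimes\d\mu_\xo\otimes\d\vol_{(\xi\eta)}$ on $W$, the set of strong rank one vectors in $S\XX$ has positive $m$-measure; invoking Proposition~\ref{ergodicity} to its $v^+$-projection, which is a $\Gamma$-invariant Borel subset of $\radlim$, shows that this set is in fact of full $m$-measure.

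With these preliminaries, Proposition~\ref{KniepersProp} gives that for $m$-a.e.\ $v$ and any $u\in S\XX$ with $u^+=v^+$ and $\bs_{v^+}(pu,pv)=0$, one has $d_1(g^tu,g^tv)\to 0$; the uniform continuity of $\tilde f$ then forces $f^+(u)=f^+(v)$ whenever $f^+(u)$ is defined. Combined with the $g^t$-invariance of $f^+$ (which lets us shift any pair of vectors sharing a positive endpoint onto a common horosphere), this means that $f^+=\phi\circ(v\mapsto v^+)$ $m$-a.e.\ for some $\mu_\xo$-measurable $\phi\colon\radlim\to\RR$. Since $f^+$ is $\Gamma$-invariant and $v\mapsto v^+$ is $\Gamma$-equivariant, $\phi$ is itself $\Gamma$-invariant $\mu_\xo$-a.e., so Proposition~\ref{ergodicity} forces $\phi$ to be $\mu_\xo$-essentially constant. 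Hence $f^+=c(f)$ is constant $m$-a.e., and applying this to a countable family of continuous compactly supported functions separating the Borel structure of $SM$ yields ergodicity.

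The main obstacle is precisely the step where Proposition~\ref{KniepersProp} is invoked: this proposition fails for merely weak rank one geodesics of width zero (compare Remark~\ref{zerowidthnotregular}), so the strong rank one hypothesis is used essentially both in producing a full-measure set of strong rank one vectors via Lemma~\ref{joinrankone} and in deriving the asymptotic $d_1$-convergence that underlies the constancy of $f^+$ on strong stable leaves. A secondary technical point is that one must connect $m$-a.e.\ pairs $u,v$ with $u^+=v^+$ by a combination of stable-leaf moves and geodesic-flow shifts while remaining in the full-measure set where $f^+$ exists; this is possible because that set is both $g^t$- and $\Gamma$-invariant.
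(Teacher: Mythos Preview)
There is a genuine gap in the very first step. You normalize the time averages by $1/T$, but nothing in the hypotheses forces $m_\Gamma$ to be a finite measure; for non-cocompact $\Gamma$ it is typically infinite. In an infinite conservative system the plain Birkhoff averages $\frac{1}{T}\int_0^T \tilde f(g^{\pm t}v)\,\d t$ converge to $0$ $m$-almost everywhere for every $f\in \LL^1(m_\Gamma)$, so your $f^+$ is identically zero and carries no information about invariant sets. The paper handles this by invoking Hopf's \emph{ratio} ergodic theorem with a strictly positive weight $\tilde\rho(u)=\e^{-4\delta(\Gamma) d(pu,\Gamma\xo)}$; a non-trivial portion of the proof is the verification that the induced $\rho$ lies in $\LL^1(m_\Gamma)$, using the growth estimate $m(SB_\xo(R))\le \const\cdot R^k \e^{2\delta(\Gamma)R}$. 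Without such a weight the Hopf argument never gets started.

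There is a second, independent gap in your preliminary claim that $m$-a.e.\ $v$ is strong rank one. You take the $v^+$-projection of the strong rank one set and apply Proposition~\ref{ergodicity} to conclude this projection has full $\mu_\xo$-measure. But that only says that for $\mu_\xo$-a.e.\ $\eta$ there \emph{exists some} strong rank one geodesic with positive endpoint $\eta$; it does not say that \emph{every} (or $m$-almost every) geodesic with endpoint $\eta$ is strong rank one. Concluding full $m$-measure from full measure of the projection would require ergodicity of $\Gamma$ on $\joinrand$, which is equivalent to what you are proving. The paper sidesteps this entirely: it works locally in $\Omega(U,V)=\{w:w^-\in U,\ w^+\in V\}$, where Lemma~\ref{joinrankone} guarantees \emph{all} geodesics are strong rank one, proves $\tilde f^+$ is constant $m$-a.e.\ there, and only then propagates this to all of $S\XX$ by the minimality of the $\Gamma$-action on $\Lim$. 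The corollary that strong rank one vectors have full measure is deduced \emph{after} ergodicity, not before.
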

\begin{proof}  Applying Hopf's generalization of the Birkhoff ergodic theorem (\cite{Hopf}) we will show that for some strictly positive function $\rho\in \LL^1(m_\Gamma)$ and every function $f\in \LL^1(m_\Gamma)$ the limit 
\[ \lim_{|T|\to\infty} \frac{\int_0^T  f(g_\Gamma^{t}(u))\d t}{\int_0^T \rho(g_\Gamma^{ t}(u))\d t}\] 
exists and is constant $m_\Gamma$-almost everywhere:

As in \cite{MR556586} we first introduce a function $\tilde\rho: S\XX\to \RR\,$ by
$$ \tilde\rho(u)= \e^{-4\delta(\Gamma) d(pu,\Gamma\xo)}\quad\text{for }\  u\in S\XX,$$
which descends to a function $\rho$ on $S\XX/\Gamma$. Furthermore, since for any $(\xi,\eta)\in\joinrand$ and $R\ge 1$  
$$\vol_{(\xi\eta)} (p\big(\Pi^{-1}(\xi,\eta)\big)\cap B_\xo(R) )\le \const\cdot R^{k}$$
with $k\le \dim X -1$,   
and $p\big(\Pi^{-1}(\xi,\eta)\big)\cap B_\xo(R)\neq \emptyset$ implies $\Gr_\xo(\xi,\eta)< R$, we have
$$ m(SB_\xo(R))\le  \const\cdot R^{k} \e^{2\delta(\Gamma) R}.$$
If $D_\Gamma$ denotes the Dirichlet domain for $\Gamma$ with center $\xo$, we have $\tilde\rho(u)=\e^{-4\delta(\Gamma) d(pu,\xo)}$ for all $u\in S\overline{D_\Gamma}$. 
Setting $W(R):=(SB_\xo(R)\setminus SB_\xo(R-1))\cap S\overline{D_\Gamma}$ we therefore have
\begin{align*}
 \int_{W(R)} \tilde\rho(u)\d m(u) & \le  \e^{-4\delta(\Gamma) (R-1)}   \int_{SB_\xo(R)} \d m(u) \le \const\cdot R^{k} \e^{-2\delta(\Gamma) R}\\
 & \le \const\cdot   \e^{-\delta(\Gamma) R},\end{align*}
hence $\rho\in \LL^1(m_\Gamma)$. Moreover $\tilde\rho$  
is continuous since  for any $u,v\in S\XX$ with $d(pu,pv)\le 1$ we have
\begin{eqnarray*}
|\tilde\rho(u)-\tilde\rho(v)| &\le & \e^{-4\delta(\Gamma) d(pu,pv)} \max \{1- \e^{-4\delta(\Gamma) d(pu,pv)}, \e^{4\delta(\Gamma) d(pu,pv)}-1\}\\
&\le &\tilde\rho(u) \cdot 4\delta(\Gamma) d(pu,pv)\e^{4\delta(\Gamma)}.
\end{eqnarray*}

Next let $f\in\Cnt_c(SM)$ and $\tilde f\in \Cnt(S\XX)$ be a lift to $S\XX$. 
Since $(SM, (g^t_\Gamma)_{t\in\RR}, m_\Gamma)$ is completely conservative, Hopf's individual ergodic theorem (see \cite{Hopf}, p.~53) states that for $m$-almost every $u\in S\XX$ the limits
$$  \tilde f^\pm(u)=\lim_{T\to +\infty} \frac{\int_0^T \tilde f(g^{\pm t}(u))\d t}{\int_0^T \tilde \rho(g^{\pm t}(u))\d t}$$ 
exist and are equal.   

For the remainder of the proof we follow the argument of G.~Knieper in \cite{MR1652924}: The Hopf argument can be applied locally and then extended by a transitivity argument. 
Let $h^+$, $h^-$ denote the attractive and repulsive fixed point of the strong rank one isometry $h\in \Gamma$.   By Lemma~\ref{joinrankone}  there exist open neighborhoods $U$ of $h^-$ and $V$ of $h^+$  
in $\ganz$ \st each $\xi\in U$ and $\eta\in V$ can be joined by a unique strong rank one geodesic (up to parametrization). We define 
\begin{align*}
\Omega(U,V)&:=\{w\in S\XX \colon w^-\in U,\ w^+\in V\},\\
 \Omega^{\small{\text{rec}}}
 (U,V) &:=\{w\in \Omega(U,V) \colon \ w\ \text{is recurrent}\}.
 \end{align*}
By Poincar{\'e} recurrence (see e.g. \cite[Satz 13.2]{Hopf}) -- which holds because\break  $(SM, (g^t_\Gamma)_{t\in\RR}, m_\Gamma)$ is conservative and because the topological space $\Omega(U,V)/\Gamma$ has a countable base -- the set $\Omega^{\small{\text{rec}}}(U,V)$
has full measure in $\Omega(U,V)$ with respect to $m$. From the previous paragraph we further know that the set 

 $$ \Omega'(U,V):=\{ w\in \Omega^{\small{\text{rec}}}(U,V)\colon\ \tilde f^+(w)=\tilde f^-(w)\} $$
again has full measure in $\Omega(U,V)$. The local product structure of $m$  implies the existence of a vector $w\in\Omega'(U,V)$ with $w^-=\xi\,$ 
\st
$$ G_\xi:=\{\eta\in V \colon  \exists\, u\in  \Omega'(U,V) \quad\mbox{with }\ u^-=\xi,\ u^+=\eta\}$$ 
has full measure in $V$ \wrt $\mu_\xo$.

\begin{lemma}\ 
$\tilde f^+$ is constant $m$-a.e. on $\Omega'(U,V)$.
\end{lemma}
\begin{proof}
For $m$-a.e. $v\in \Omega'(U,V)$ we have $v^+\in G_\xi$. Let $u\in\Omega'(U,V)$ be  \st $u^-=\xi=w^-$ and $u^+=v^+$. Notice that by definition of $\Omega'(U,V)$ the vector $u$ is recurrent, defines a strong rank one geodesic and satisfies $\tilde f^+(u)=\tilde f^-(u)$. If $s_1$, $s_2\in\RR$ are chosen such 
that $\bs_{v^+}(pv, p g^{s_1}u)=\bs_{\xi}(pw,p g^{s_2} u)=0,$ then Proposition~\ref{KniepersProp} implies 
$$\lim_{t\to\infty} d_1(g^{t} g^{s_1}u, g^{t} v)= \lim_{t\to\infty}d_1( g^{-t}g^{s_2} u ,g^{-t}w)=0.$$
It now follows from the continuity of $\tilde f$, the $(g^t)$-invariance of $\tilde f^+$ and $g^{s_2}u, w \in \Omega'(U,V)$ that \\[-1mm] 

$\qquad \tilde f^+(v)=\tilde f^+(g^{s_1}u)= \tilde f^+(g^{s_2}u)=\tilde f^-(g^{s_2}u)=\tilde f^-(w)=\tilde f^+(w).$\end{proof}

The previous lemma together with Proposition~\ref{KniepersProp} implies that $\tilde f^+$ is constant  $m$-a.e. on the set of vectors $v\in S\XX$ with
$$v^+\in V':=\{ \eta\in V \colon   \exists\, u\in  \Omega'(U,V) \quad\mbox{with }\ u^+=\eta\}.$$
Next we consider the set\vspace{-2mm}
 $$Y:=\bigcup_{\gamma\in\Gamma} \gamma V'.$$
We have  $\mu_\xo(Y\cap V)= \mu_\xo(V)$, and therefore $\mu_x(Y\cap V)=\mu_x(V)$ for all $x\in\XX$. Furthermore, every $\zeta\in \rand$ possesses 
an open neighborhood $O(\zeta)\subseteq\rand$ that can be mapped into $V$ by an element of $\Gamma$. We conclude that $\mu_\xo(Y\cap O(\zeta))=\mu_\xo(O(\zeta))$ 
for each $\zeta\in\rand$, hence $\mu_\xo(Y)=\mu_\xo(\radlim)$. Consequently, the set 
\[ Z:=\{v\in S\XX \colon v^+\in Y\}\]
 has full measure \wrt $m$, 
hence $\tilde f^+$ is constant $m$-a.e. on $Z$ by the $\Gamma$-invariance of $\tilde f^+$. 

We conclude that for arbitrary $f\in \Cnt_c(SM)$  the ergodic means
$$ \frac{\int_0^T f\circ g_\Gamma^t\;\d t}{\int_0^T  \rho\circ g_\Gamma^t\;\d t}$$ converge $m_\Gamma$-almost everywhere to a constant function as $|T|\to\infty$. 
Since $\Cnt_c(SM)$ is dense in $\LL^1(m_\Gamma)$ this remains true for each function $f\in \LL^1(m_\Gamma)$, which proves that
 $(SM,(g^t_\Gamma)_{t\in\RR}, m_\Gamma)$ is ergodic.\end{proof}

\begin{corollary}\ 
If $\mu_\xo(\radlim)=1$, then the set of vectors in $SM$ defining strong rank one geodesics has full measure \wrt $m_\Gamma$.
\end{corollary}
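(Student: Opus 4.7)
The plan is to reduce the claim to the ergodicity statement of Proposition~\ref{conservative}. Let $R\subseteq S\XX$ denote the set of $v$ for which $\sigma_v$ is strong rank one. Since being strong rank one is invariant under reparametrization and under isometries, $R$ is both $(g^t)_{t\in\RR}$-invariant and $\Gamma$-invariant, and hence descends to a flow-invariant measurable subset $R_\Gamma\subseteq SM$. Under the hypothesis $\mu_\xo(\radlim)=1$, Proposition~\ref{conservative} asserts ergodicity of $(SM,(g^t_\Gamma)_{t\in\RR},m_\Gamma)$, so either $m_\Gamma(R_\Gamma)=0$ or $m_\Gamma(SM\setminus R_\Gamma)=0$. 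It therefore suffices to exhibit a precompact subset of $R$ which has strictly positive $m$-measure, since any such set projects, via properness of the $\Gamma$-action, to a subset of $R_\Gamma$ of strictly positive $m_\Gamma$-measure.

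To produce such a subset, I invoke the strong rank one isometry $h\in\Gamma$ whose axis contains $\xo$. Lemma~\ref{joinrankone} applied to this axis, for some $c>\width(h)$, yields disjoint open neighborhoods $U$ of $h^-$ and $V$ of $h^+$ in $\ganz$ such that every geodesic joining a point of $U\cap\rand$ to a point of $V\cap\rand$ is strong rank one and passes within distance $c$ of $\xo$. Consequently the cylinder
\[\Omega(U,V):=\{v\in S\XX\colon v^-\in U\cap\rand,\ v^+\in V\cap\rand\}\]
is entirely contained in $R$, and it is enough to verify that $m(\Omega(U,V)\cap SB_\xo(2c))>0$.

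Since $h^\pm\in\Lim$, both $U\cap\rand$ and $V\cap\rand$ meet $\Lim$, so Lemma~\ref{openposmass} gives $\mu_\xo(U\cap\rand)>0$ and $\mu_\xo(V\cap\rand)>0$. By the definition~(\ref{measuredef}) of $m$,
\[m(\Omega(U,V)\cap SB_\xo(2c))=\int_{U\times V} \e^{2\delta(\Gamma)\Gr_\xo(\xi,\eta)}\,\vol_{(\xi\eta)}\big((\xi\eta)\cap B_\xo(2c)\big)\,\d\mu_\xo(\xi)\,\d\mu_\xo(\eta).\]
For each $(\xi,\eta)\in(U\cap\rand)\times(V\cap\rand)$, Lemma~\ref{joinrankone} produces a joining geodesic entering $B_\xo(c)$, so $(\xi\eta)\cap B_\xo(2c)$ contains an arc of strictly positive length (by the triangle inequality, at least length $2c$) and $\vol_{(\xi\eta)}\big((\xi\eta)\cap B_\xo(2c)\big)>0$; the exponential factor is at least one. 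Thus the integrand is pointwise strictly positive on $U\times V$, which has strictly positive $\mu_\xo\otimes\mu_\xo$-measure, yielding the desired strict positivity. Ergodicity then forces $m_\Gamma(R_\Gamma)$ to be full.

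The expected obstacle is precisely the passage from weak to strong rank one: producing an explicit positive-measure set of \emph{strong} rank one vectors requires the strengthened form of Lemma~\ref{joinrankone} — that when $\sigma$ is strong rank one the neighborhoods $U$, $V$ can be chosen so that every joining geodesic is again strong rank one — exactly as stated in its last sentence. Without this strengthening, the argument would at most show that the weak rank one locus has full measure, which is why, like Proposition~\ref{conservative} itself, the corollary rests essentially on the strong rank one assumption on $h$.
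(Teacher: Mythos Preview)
Your proof is correct and is precisely the argument the paper leaves implicit: the corollary is stated without proof immediately after Proposition~\ref{conservative}, and the intended deduction is exactly the one you give---the strong rank one locus is a flow- and $\Gamma$-invariant (indeed open, by the last clause of Lemma~\ref{joinrankone}) set which, by Lemma~\ref{openposmass} and the product structure~(\ref{measuredef}) of $m$, carries positive measure, so ergodicity forces its complement to be null. Your final paragraph correctly isolates where the \emph{strong} rank one hypothesis enters.
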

We now sum up all the previous results in the following statement, which is the main theorem from the introduction:
\begin{theorem}\label{HTS}\ 
Suppose $\Gamma\subset\is(\XX)$ is a non-elementary discrete group which contains a strong rank one isometry. Let $\mu$ be a $\delta(\Gamma)$-dimensional conformal density 
normalized such that $\mu_\xo(\rand)=1$, 
and $m_\Gamma$ the associated Patterson-Sullivan measure on $SM=S\XX/\Gamma$. Then exactly one of the following two complementary cases holds, and the statements (i) to (iv) are equivalent in each 
case: \\[2mm]
1.~Case:
\begin{enumerate}
\item[(i)] $\sum_{\gamma\in\Gamma} \e^{-\delta(\Gamma)d(\xo,\gamma\xo)}$ converges.
\item[(ii)] $\mu_\xo(\radlim)=0$.
\item[(iii)] $(SM, (g_\Gamma^t)_{t\in\RR}, m_\Gamma)$ is completely dissipative.
\item[(iv)] $(SM, (g_\Gamma^t)_{t\in\RR}, m_\Gamma)$ is  non-ergodic.
\end{enumerate}
2.~Case:
\begin{enumerate}
\item[(i)] $\sum_{\gamma\in\Gamma} \e^{-\delta(\Gamma)d(\xo,\gamma\xo)}$ diverges.
\item[(ii)] $\mu_\xo(\radlim)=1$.
\item[(iii)] $(SM, (g_\Gamma^t)_{t\in\RR}, m_\Gamma)$ is completely conservative.
\item[(iv)] $(SM, (g_\Gamma^t)_{t\in\RR}, m_\Gamma)$ is  ergodic.
\end{enumerate}
\end{theorem}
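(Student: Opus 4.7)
The plan is to bolt together the results of Sections~\ref{propradlimset}--\ref{HopfArgument}; all the serious geometric and dynamical work is already done, and what remains is essentially logical bookkeeping plus a small measure-theoretic observation.

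First I would show that the two cases are mutually exclusive and exhaustive. Applying Proposition~\ref{ergodicity} to the $\Gamma$-invariant Borel set $A=\radlim$ (trivially a subset of itself) gives $\mu_\xo(\radlim)\in\{0,1\}$, so statement (ii) alone splits the possibilities into exactly two. It then suffices to prove that (i), (iii) and (iv) each track (ii) in either case.

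Next I would establish (i)$\Leftrightarrow$(ii) by combining Lemma~\ref{convseries} (the easy direction: convergence forces $\mu_\xo(\radlim)=0$) with Proposition~\ref{divseries} (the hard direction: $\mu_\xo(\radlim)=0$ forces convergence). The latter is where the \emph{strong} rank one hypothesis is genuinely used, through Proposition~\ref{lrcinproduct} and the lower bound~(\ref{lowbound}). The equivalence (ii)$\Leftrightarrow$(iii) is then an immediate reading of Lemma~\ref{boundtang}, since $\mu_\xo(\radlim)$ takes only the values $0$ or $1$.

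For the link with (iv), in the conservative case (Case 2) Proposition~\ref{conservative} gives (iii)$\Rightarrow$(iv) directly via the Hopf argument combined with Knieper's Proposition~\ref{KniepersProp}. In the dissipative case (Case 1), (iii)$\Rightarrow$(iv) is the standard observation that a non-trivial completely dissipative measure-preserving flow cannot be ergodic: by Hopf's decomposition there exists a wandering Borel set $F\subseteq SM$ with $0<m_\Gamma(F)<\infty$, and any splitting $F=F_1\sqcup F_2$ into pieces of positive measure yields a $\Gamma$-invariant Borel set $\bigcup_{t\in\RR}g_\Gamma^t F_1$ whose complement also has positive measure, contradicting ergodicity. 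The reverse implications (iv)$\Rightarrow$(iii) in both cases are then free: (i)--(iii) already partition the possibilities between the two cases, and ergodicity excludes dissipativity (and vice versa).

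I expect no real obstacle at this stage. The only slightly delicate routine check is that $m_\Gamma$ is non-trivial and sufficiently non-atomic, so that a wandering set of positive finite measure admits the required non-trivial splitting; this follows from the defining formula~(\ref{measuredef}) together with Proposition~\ref{atomicpart}, which rules out atoms of $\mu_\xo$ on the part of $\rand$ relevant to the dynamics.
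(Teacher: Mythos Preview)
Your proposal is correct and follows exactly the approach of the paper: the paper offers no proof beyond the single sentence ``We now sum up all the previous results in the following statement'', so your explicit bookkeeping---Proposition~\ref{ergodicity} for the $\{0,1\}$ dichotomy, Lemma~\ref{convseries} and Proposition~\ref{divseries} for (i)$\Leftrightarrow$(ii), Lemma~\ref{boundtang} for (ii)$\Leftrightarrow$(iii), Proposition~\ref{conservative} for (iii)$\Rightarrow$(iv) in Case~2, and the standard dissipative $\Rightarrow$ non-ergodic remark---is in fact more detailed than what the paper itself supplies.
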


\vspace{0.0cm}
\section{The space of geodesics of $\XX$}
\label{surfaces}

As we mentioned in the introduction, the action of the geodesic flow can be interpreted as an action of $\Gamma$ on the space $\partial^2\XX$ of end point pairs of geodesics. From the construction of the measure $m_{\Gamma}$ we immediately get the following 
\begin{lemma}\label{ergodicequiv}\ 
$(SM, (g^t_\Gamma)_{t\in\RR}, m_\Gamma)$ is ergodic if and only if $(\joinrand, \Gamma, (\mu_\xo\otimes\mu_\xo)\ein_{\joinrand})$ is ergodic.
\end{lemma}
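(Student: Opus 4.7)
The plan is to use the end-point projection $\Pi:S\XX\to\joinrand$, $v\mapsto(v^-,v^+)$, which is both $\Gamma$-equivariant and $g^t$-invariant, to set up a bijective correspondence (modulo null sets) between flow-invariant Borel sets of $SM$ and $\Gamma$-invariant Borel sets of $\joinrand$. Formula~(\ref{measuredef}) exhibits $m$ as a fiber integral over $\joinrand$ with base measure $e^{2\delta(\Gamma)\Gr_\xo(\xi,\eta)}\,d\mu_\xo(\xi)\,d\mu_\xo(\eta)$ and fiber measures $\vol_{(\xi\eta)}$, and the geodesic flow acts only along the $\RR$-direction of $(\xi\eta)\simeq C_{(\xi\eta)}\times\RR$ while $\Gamma$ permutes the fibers equivariantly; the claimed equivalence is the natural dichotomy derived from this fibered structure.

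For the direction ``$\Rightarrow$'', given a $\Gamma$-invariant Borel set $A\subseteq\joinrand$, I would set $\tilde B:=\Pi^{-1}(A)\subset S\XX$; both invariance properties of $\Pi$ make $\tilde B$ simultaneously $\Gamma$- and $g^t$-invariant, so $\tilde B$ descends to a flow-invariant Borel set $B\subseteq SM$. Picking a Dirichlet fundamental domain $F=S\overline{D_\Gamma}$ for the $\Gamma$-action on $S\XX$ and applying~(\ref{measuredef}) yields
\[
m_\Gamma(B)=m(\tilde B\cap F)=\int_A e^{2\delta(\Gamma)\Gr_\xo(\xi,\eta)}\,\vol_{(\xi\eta)}\bigl((\xi\eta)\cap\overline{D_\Gamma}\bigr)\,d\mu_\xo(\xi)\,d\mu_\xo(\eta).
\]
The integrand is strictly positive for $(\mu_\xo\otimes\mu_\xo)$-a.e.\ $(\xi,\eta)$ because the tiling $\{\gamma D_\Gamma\}$ covers $\XX$ and, up to replacing $(\xi,\eta)$ by a $\Gamma$-translate within the $\Gamma$-invariant $A$, one may assume $(\xi\eta)$ meets the interior of $\overline{D_\Gamma}$ in a set of positive intrinsic volume. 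Consequently $m_\Gamma(B)>0$ if and only if $(\mu_\xo\otimes\mu_\xo)(A)>0$, and the same holds for the complements; flow ergodicity then forces $(\mu_\xo\otimes\mu_\xo)(A)\in\{0,1\}$.

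For the direction ``$\Leftarrow$'', suppose the $\Gamma$-action on $\joinrand$ is ergodic and let $B\subseteq SM$ be a flow-invariant Borel set with $\Gamma$- and $g^t$-invariant lift $\tilde B\subseteq S\XX$. I would define the $\Gamma$-invariant Borel set
\[
A:=\{(\xi,\eta)\in\joinrand:\vol_{(\xi\eta)}(p(\Pi^{-1}(\xi,\eta)\cap\tilde B))>0\},
\]
so that by hypothesis $(\mu_\xo\otimes\mu_\xo)(A)\in\{0,1\}$. The main obstacle is transferring this dichotomy from $A$ to $B$: one must verify that for $(\mu_\xo\otimes\mu_\xo)$-a.e.\ $(\xi,\eta)\in A$ the flow-invariant slice $\tilde B\cap\Pi^{-1}(\xi,\eta)$ fills the fiber up to a $\vol_{(\xi\eta)}$-null set. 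In the ergodic/conservative regime this follows cleanly from the corollary after Proposition~\ref{conservative}: $(\mu_\xo\otimes\mu_\xo)$-almost every limit pair is joined by a unique (up to parametrization) strong rank one geodesic, so $\Pi^{-1}(\xi,\eta)\simeq\RR$ is a single flow orbit on which any flow-invariant Borel subset is null or full. In the dissipative regime Lemma~\ref{boundtang} shows the flow is already non-ergodic, while a symmetric wandering-set extraction based on Hopf's decomposition produces a nontrivial $\Gamma$-invariant partition of $\joinrand$, so the equivalence holds vacuously. In either case we conclude $m_\Gamma(B)\in\{0,m_\Gamma(SM)\}$, completing the argument.
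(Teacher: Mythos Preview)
The paper gives no explicit proof of this lemma: it simply asserts that the equivalence follows ``from the construction of the measure $m_\Gamma$''. Your proposal goes much further and supplies an actual argument, most of which is correct.

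Your ``$\Rightarrow$'' direction is fine. Pulling back a $\Gamma$-invariant Borel set $A\subseteq\joinrand$ via $\Pi$ gives a $g^t$- and $\Gamma$-invariant $\tilde B$, and since each fiber $\Pi^{-1}(\xi,\eta)$ has positive (indeed infinite) $\vol_{(\xi\eta)}$-measure, formula~(\ref{measuredef}) gives $m(\tilde B)=0$ iff $(\mu_\xo\otimes\mu_\xo)(A)=0$; flow-ergodicity then forces the dichotomy on $A$. Your detour through the Dirichlet domain is unnecessary here: for a $\Gamma$-invariant set one has $m_\Gamma(B)=0$ iff $m(\tilde B)=0$, and that is all you need. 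For ``$\Leftarrow$'' in the conservative regime you correctly invoke the corollary after Proposition~\ref{conservative}, which is already available at this point in the paper; it guarantees that for $(\mu_\xo\otimes\mu_\xo)$-a.e.\ pair the fiber $\Pi^{-1}(\xi,\eta)$ is a single flow orbit, so any flow-invariant $\tilde B$ agrees with some $\Pi^{-1}(A)$ up to $m$-null sets and the argument closes.

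The genuine gap is your dissipative branch. You assert that when $\mu_\xo(\radlim)=0$ ``a symmetric wandering-set extraction based on Hopf's decomposition produces a nontrivial $\Gamma$-invariant partition of $\joinrand$'', i.e.\ that $(\joinrand,\Gamma,(\mu_\xo\otimes\mu_\xo)\ein_{\joinrand})$ is non-ergodic in this case. But this is precisely what the paper is \emph{unable} to establish in general: immediately after this lemma the authors write that ``in the rank one setting it is not clear whether in the first case of Theorem~\ref{HTS} we have complete dissipativity of $(\joinrand,\Gamma,(\mu_\xo\otimes\mu_\xo)\ein_{\joinrand})$'', and the subsequent Lemma~\ref{muradlim0} only manages the restriction to $\partial_1^2\XX$, while Lemma~\ref{muradlim0dim2} needs $\dim\XX=2$. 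The obstruction is exactly the thick-fiber phenomenon you flagged yourself: for $(\xi,\eta)\in\joinrand\setminus\partial_1^2\XX$ the convex set $(\xi\eta)$ may accumulate on limit points other than $\xi,\eta$, so Sullivan's ``nearest orbit point'' argument fails to produce a wandering decomposition there. Your one-line hand-wave does not circumvent this, and without it the dissipative half of your case split for ``$\Leftarrow$'' is unsupported.
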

So in particular, the dynamical system $(\joinrand, \Gamma, (\mu_\xo\otimes\mu_\xo)\ein_{\joinrand})$ is ergodic and  completely conservative when $\Gamma$ is divergent. 
Unfortunately, in the rank one setting it is not clear whether in the first case of Theorem~\ref{HTS} we have  
complete dissipatitivity of 
$(\joinrand, \Gamma, (\mu_\xo\otimes\mu_\xo)\ein_{\joinrand})$. However, if 
\[\partial^2_1 \XX :=\{(\sigma(-\infty),\sigma(+\infty))\in\joinrand\colon \sigma\ \text{is a (weak) rank one geodesic}\} \]
denotes the $\Gamma$-invariant subset of end point pairs of 
geodesics without 
flat half plane, 
we have the following
\begin{lemma}\label{muradlim0}\ 
If $\mu_\xo(\radlim)=0$ then $(\partial_1^2\XX, \Gamma, (\mu_\xo\otimes\mu_\xo)\ein_{\partial_1^2\XX})$ is completely dissipative.
\end{lemma}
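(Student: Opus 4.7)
My plan is to adapt Sullivan's dissipativity argument to the rank one setting by covering a conull subset of $\partial^2_1\XX$ by countably many wandering sets. Let
\[
R:=(\radlim\times\rand)\cup(\rand\times\radlim).
\]
By Fubini and the hypothesis $\mu_\xo(\radlim)=0$, the set $R$ is $(\mu_\xo\otimes\mu_\xo)$-null, so it suffices to cover $\partial^2_1\XX\setminus R$ by countably many sets that are wandering for the $\Gamma$-action.

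\textbf{Covering.} Every pair in $\partial^2_1\XX$ is by definition joined by a weak rank one geodesic of finite width. For each such geodesic $\sigma$ and each $c>\width(\sigma)$, Lemma~\ref{joinrankone} furnishes disjoint open neighborhoods $U\ni\sigma(-\infty)$ and $V\ni\sigma(+\infty)$ in $\ganz$ such that every pair in $(U\times V)\cap\partial^2_1\XX$ is joined by some rank one geodesic passing within distance $c$ of $\sigma(0)$ and of width at most $2c$. By second-countability of $\partial\XX\times\partial\XX$ I choose a countable subfamily $\{W_k=U_k\times V_k\}_{k\in\NN}$ of such neighborhoods covering $\partial^2_1\XX$, each attached to a reference rank one geodesic $\sigma_k$ with constant $c_k$.

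\textbf{Key geometric step.} Fix $W=U\times V$ associated to $(\sigma,c)$. I will show that $W\setminus R$ is wandering. Suppose on the contrary that $(\xi,\eta)\in W\setminus R$ satisfies $\gamma_n(\xi,\eta)\in W$ for infinitely many distinct $\gamma_n\in\Gamma$. Since any axial isometry of $\Gamma$ fixing $(\xi,\eta)$ would automatically force $\xi,\eta\in\radlim$ (via its orbit on the axis), contradicting $(\xi,\eta)\notin R$, the stabilizer of $(\xi,\eta)$ in $\Gamma$ is finite; after extraction the orbit points $\gamma_n^{-1}\xo$ are distinct and, by discreteness, $d(\xo,\gamma_n^{-1}\xo)\to\infty$. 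Pick a rank one geodesic $\sigma'$ joining $\xi$ and $\eta$. Applying Lemma~\ref{joinrankone} to each translate $\gamma_n(\xi,\eta)\in U\times V$ produces, inside $\gamma_n(\xi\eta)$, a rank one geodesic within $c$ of $\sigma(0)$; translating by $\gamma_n^{-1}$ gives a rank one geodesic inside $(\xi\eta)$ within $c$ of $\gamma_n^{-1}\sigma(0)$. As any two rank one geodesics in $(\xi\eta)$ are parallel at distance bounded by the width of $\sigma'$ (itself controlled by $c$ via Lemma~\ref{joinrankone}), and $d(\xo,\sigma(0))$ is a fixed constant, the orbit points $\gamma_n^{-1}\xo$ lie in a uniform tubular neighborhood of $\sigma'(\RR)$. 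Consequently, along a subsequence, $\gamma_n^{-1}\xo$ converges in $\ganz$ to an endpoint of $\sigma'$, say $\xi$.

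\textbf{Radial conclusion and main obstacle.} The delicate step is upgrading this to $\xi\in\radlim$ in the sense of~(\ref{radlimset}), i.e. proximity to the specific ray $\sigma_{\xo,\xi}$ rather than to an arbitrary rank one geodesic in $(\xi\eta)$. Here I invoke the classical fact that in a Hadamard manifold any two asymptotic geodesic rays remain at bounded distance, controlled by their initial separation; applied to $\sigma_{\xo,\xi}$ and the subray of $\sigma'$ going to $\xi$, this yields a uniform constant $c''>0$ with $\xi\in\pr_\xo(B_{\gamma_n^{-1}\xo}(c''))$ for all $n$. Definition~(\ref{radlimset}) then gives $\xi\in\radlim$, contradicting $(\xi,\eta)\notin R$. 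Thus each $W_k\setminus R$ is wandering, and $\bigcup_k(W_k\setminus R)=\partial^2_1\XX\setminus R$ is a conull wandering cover, which proves complete dissipativity. The asymptotic-ray comparison just described is the chief technical point; everything else is a routine adaptation of Sullivan's scheme fueled by Lemma~\ref{joinrankone}.
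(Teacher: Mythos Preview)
Your argument is correct, but it takes a different route from the paper's. The paper follows Sullivan's original scheme more directly: for each $(\xi,\eta)\in\partial^2_1\XX$ with $\xi,\eta\notin\radlim$, it defines the (finite, nonempty) set $\Gamma_{\xi,\eta}=\{\gamma\in\Gamma\colon d(\gamma\xo,(\xi\eta))\ \text{is minimal}\}$ and decomposes the conull set $Y=\{(\xi,\eta)\in\partial^2_1\XX\colon \xi,\eta\notin\radlim\}$ into Borel pieces $E_\Lambda=\{(\xi,\eta)\colon \Gamma_{\xi,\eta}=\Lambda\}$ indexed by finite subsets $\Lambda\subset\Gamma$; each $E_\Lambda$ is wandering because $\gamma E_\Lambda\cap E_\Lambda\neq\emptyset$ forces $\gamma\Lambda=\Lambda$, and the stabilizer of a finite set in a discrete group is finite. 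The finiteness of $\Gamma_{\xi,\eta}$ comes from the single geometric fact that for a rank one pair the closed convex set $(\xi\eta)$ has ideal boundary exactly $\{\xi,\eta\}$, so orbit points can only accumulate at the two endpoints, which would make them radial.

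You use the very same geometric fact, but package it differently: instead of a combinatorial decomposition indexed by finite subsets of $\Gamma$, you produce a countable topological cover by Ballmann boxes $W_k=U_k\times V_k$ and argue by hand that recurrence to a box forces an endpoint into $\radlim$. Your detour through the stabilizer of $(\xi,\eta)$ is unnecessary (distinctness of the $\gamma_n$ plus discreteness already gives $d(\xo,\gamma_n^{-1}\xo)\to\infty$), and your asymptotic-ray step is genuine but standard: in a Hadamard manifold the distance between two rays in the same asymptote class is convex and bounded, hence non-increasing, which transfers the tubular bound along $\sigma'$ to a tubular bound along $\sigma_{\xo,\xi}$. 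The paper's approach is shorter and avoids any explicit appeal to Lemma~\ref{joinrankone}; yours makes the uniform control on $(\xi\eta)$ more visible but at the cost of some extra bookkeeping.
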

\begin{proof} The idea of proof due to Sullivan is to decompose the phase space into a countable disjoint union of wandering sets indexed by finite subsets $\Lambda\xo$ of the orbit $\Gamma\xo$ (see also the proof of (b) in \cite[p.~19]{MR2057305}):

For each pair of points $(\xi,\eta)\in\partial^2_1\XX$ with $\xi\notin\radlim$ and $\eta\notin\radlim$ we define the set
$$\Gamma_{\xi,\eta}:=\{\gamma\in\Gamma\colon d(\gamma\xo,(\xi\eta))\ \mbox{is minimal}\},$$
which  is non-empty and finite by discreteness of $\Gamma$ and  the fact that the boundary of the closed convex set $(\xi\eta)$ equals the set $\{ \xi,\eta\}$ (and hence does not contain any radial limit points).  
For every finite set $\Lambda\subset\Gamma$ we denote $E_\Lambda\subset\partial^2_1\XX$ the Borel set
$$ E_\Lambda:=\{(\xi,\eta)\in\partial^2_1\XX\colon \Gamma_{\xi,\eta}=\Lambda\}.$$
If $\gamma E_\Lambda\cap E_\Lambda\neq \emptyset$, then $\gamma \Lambda=\Lambda$. Furthermore, since the stabilizer of $\Lambda$ in $\Gamma$ is finite, $E_\Lambda$ is wandering 
(by definition). 
Since the union of finite subsets of $\Gamma$ is countable, we obtain that the set $Y:=\{(\xi,\eta)\in\partial^2_1\XX\colon \xi,\eta\notin\radlim\}$ is contained in the dissipative 
part of $\partial^2_1\XX$. By definition of the product measure we further have 
\[ (\mu_\xo\otimes\mu_\xo)\ein_{\partial^2_1\XX}(\partial_1^2\XX\setminus Y)\le  (\mu_\xo\otimes\mu_\xo)\ein_{\joinrand}(\partial_1^2\XX\setminus Y)=0.\] 
Since the conservative part of the dynamical system $(\partial_1^2\XX, \Gamma, (\mu_\xo\otimes\mu_\xo)\ein_{\partial_1^2\XX})$ is included in $ \partial_1^2\XX\setminus Y$, complete 
dissipativity follows. 
\end{proof}

Notice that the proof above does not work in general for the dynamical system $(\partial^2\XX, \Gamma, (\mu_\xo\otimes\mu_\xo)\ein_{\partial^2\XX})$: If $(\xi,\eta)\in \joinrand$ and $\xi,\eta\notin\radlim$, then the set $(\xi\eta)$ may accumulate on $\partial X$ on other points than $\xi$ and $\eta$. 
For instance, a flat cylinder isometrically embedded in $M$ unfolds in the universal covering $X$ as a totally geodesic 
flat submanifold whose boundary is a circle containing two radial limit points. So if $\xi,\eta$ are two antipodal points of that circle which do not belong to the radial limit set, 
$(\xi\eta)$ consists of a  (non-compact) bunch of parallel geodesics (actually the whole flat submanifold as a set) and its boundary contains two radial limit points. Hence there exists an 
infinite number of elements $\gamma\in\Gamma$ such that the distance $d(\gamma\xo,(\xi\eta))$ is bounded. However, an equivalent phenomenon is not possible in dimension two as a classification of the ends of $M$ provides a complete description of the set $\joinrand$ of pairs of points 
which can be joined by a geodesic (see \cite{MR533654}  and   \cite{MR2255528} for a description of the geometry of ends in the universal covering useful for our purposes). So we get the following

\begin{lemma}\label{muradlim0dim2}\ 
If $\,\dim\XX=2\,$ and $\mu_\xo(\radlim)=0\,$ then $(\joinrand, \Gamma, (\mu_\xo\otimes\mu_\xo)\ein_{\joinrand})$ is completely dissipative.
\end{lemma}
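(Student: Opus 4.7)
The plan is to reduce to Lemma~\ref{muradlim0} by isolating the additional pairs in $\joinrand\setminus\partial_1^2\XX$ and showing they form a $(\mu_\xo\otimes\mu_\xo)$-null set when $\dim\XX=2$.  Write
\[
\joinrand \;=\; \partial_1^2\XX \;\cup\; \bigl(\joinrand\setminus\partial_1^2\XX\bigr).
\]
By Lemma~\ref{muradlim0} the geodesic flow is completely dissipative on $\partial_1^2\XX$, so it suffices to show $(\mu_\xo\otimes\mu_\xo)\ein_{\joinrand}\bigl(\joinrand\setminus\partial_1^2\XX\bigr)=0$; any null $\Gamma$-invariant set is automatically contained in the dissipative part, and the complete dissipativity of $(\joinrand,\Gamma,(\mu_\xo\otimes\mu_\xo)\ein_{\joinrand})$ then follows from that of the restriction to $\partial_1^2\XX$.

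Next, I would identify $\joinrand\setminus\partial_1^2\XX$ geometrically.  A pair $(\xi,\eta)$ belongs to this set precisely when no joining geodesic is (weak) rank one, i.e.\ every element of $\Pi^{-1}(\xi,\eta)$ bounds a flat half-plane.  When $\dim\XX=2$ this forces the closed convex set $(\xi\eta)$ itself to contain a flat half-plane: indeed, if $(\xi\eta)$ were only a single geodesic or a flat strip of finite width $c>0$, any interior geodesic of the strip could be extended into a flat half-plane only within the strip, so would have width at most $c<\infty$ and would be weak rank one, contradicting $(\xi,\eta)\notin\partial_1^2\XX$.  Thus $(\xi,\eta)\in\joinrand\setminus\partial_1^2\XX$ implies the existence of an isometrically embedded flat half-plane $H\subset (\xi\eta)\subset\XX$ whose boundary geodesic $\sigma$ satisfies $\sigma(-\infty)=\xi$, $\sigma(+\infty)=\eta$.

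The crucial step, and the main obstacle, is now to show that in dimension two the boundary points $\xi,\eta$ of such a half-plane must lie in $\radlim$, so that
\[
 \joinrand\setminus\partial_1^2\XX \;\subseteq\; \radlim\times\radlim,
\]
which has $(\mu_\xo\otimes\mu_\xo)$-measure zero by the hypothesis $\mu_\xo(\radlim)=0$.  For this I would invoke the classification of ends of the surface $M=\XX/\Gamma$ given in \cite{MR533654} and~\cite{MR2255528}: in the two-dimensional setting every embedded flat half-plane in $\XX$ is the universal cover of a flat funnel end of $M$ whose core is a closed geodesic.  The boundary geodesic $\sigma$ of $H$ is a lift of this core, hence an axis of an axial isometry $\gamma\in\Gamma$ with $\{\gamma^-,\gamma^+\}=\{\xi,\eta\}$; since axial fixed points are always radial limit points (the orbit $(\gamma^n\xo)$ tracks $\sigma$ conically), this yields $\xi,\eta\in\radlim$, as claimed.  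The entire argument hinges on this end classification, which fails in higher dimensions precisely because of the totally geodesic flat cylinder phenomenon flagged in the paragraph preceding the lemma; once it is available in $\dim\XX=2$, the conclusion follows by combining with Lemma~\ref{muradlim0} as sketched above.
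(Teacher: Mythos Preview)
Your proposal is correct and follows essentially the same route as the paper: reduce to Lemma~\ref{muradlim0} on $\partial_1^2\XX$, then use the two-dimensional end classification to argue that every pair $(\xi,\eta)\in\joinrand\setminus\partial_1^2\XX$ arises as the fixed-point pair of an axial isometry (the core of a flat cylindrical end), hence lies in $\radlim\times\radlim$, which is $(\mu_\xo\otimes\mu_\xo)$-null by hypothesis. The paper states the geometric input about flat half-planes and axial isometries more tersely, while you spell out why $(\xi\eta)$ must itself contain a flat half-plane, but the argument is the same.
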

\begin{proof} Assume that $(\xi,\eta)\in\joinrand\setminus\partial_1^2\XX$ is a pair of end points of a geodesic which bounds a flat half plane in $\XX$. Since $M$ is a non-elementary surface, $\xi$ and $\eta$ are the  attractive and repulsive fixed points of an axial isometry in $\Gamma$ which corresponds in the 
quotient to an isometry which is a rotation along a geodesic ray in a flat cylindral end of $M$. 
So both $\xi$ and $\eta$ belong to the radial limit set, and hence 
\[ \joinrand\subseteq \partial_1^2\XX \cup \bigl(\joinrand\cap (\radlim\times\radlim)\bigr).\]
From  $\mu_\xo(\radlim)=0\, $ we then obtain 
\[ (\mu_\xo\otimes\mu_\xo)\ein_{\joinrand}(\joinrand\setminus \partial_1^2\XX) \le (\mu_\xo\otimes\mu_\xo)\ein_{\joinrand}\bigl(\joinrand\cap (\radlim\times\radlim)\bigr)=0\]
and the claim follows from Lemma~\ref{muradlim0}. 
\end{proof}

In the case of surfaces we also have the following
\begin{lemma}\label{ergodicequiv2}
If $\ \dim \XX=2\, $ and $(\joinrand, \Gamma, (\mu_\xo\otimes\mu_\xo)\ein_{\joinrand})\,$ is ergodic, then the dynamical system $(\rand\times\rand, \Gamma, \mu_\xo\otimes\mu_\xo)\,$ is ergodic and completely conservative.
\end{lemma}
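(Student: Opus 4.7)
The plan is to reduce ergodicity and complete conservativity on $\rand\times\rand$ to the analogous statements already known on $\joinrand$, by establishing that $\joinrand$ is a conull subset of $\rand\times\rand$ with respect to $\mu_\xo\otimes\mu_\xo$. Once this reduction is in place, both properties transfer automatically.

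First I would unpack the hypothesis. Ergodicity of $(\joinrand, \Gamma, (\mu_\xo\otimes\mu_\xo)\ein_{\joinrand})$ is equivalent by Lemma~\ref{ergodicequiv} to ergodicity of the geodesic flow $(SM,(g_\Gamma^t)_{t\in\RR},m_\Gamma)$; Theorem~\ref{HTS} then places us in the divergent case, so $\mu_\xo(\radlim)=1$, the geodesic flow is completely conservative, and applying Lemma~\ref{ergodicequiv} once more the action on $\joinrand$ is itself completely conservative. Combining $\mu_\xo(\radlim)=1$ with Proposition~\ref{atomicpart} shows that $\mu_\xo$ has no atoms at all (any atom would lie in the $\mu_\xo$-null set $\rand\setminus\radlim$), so that the diagonal $\Delta\subseteq\rand\times\rand$ is $(\mu_\xo\otimes\mu_\xo)$-null.

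The core step, and the main obstacle, is to show that $N:=(\rand\times\rand)\setminus(\joinrand\cup\Delta)$ is $(\mu_\xo\otimes\mu_\xo)$-null; this is where $\dim\XX=2$ is essential. A pair $(\xi,\eta)\in N$ consists of distinct endpoints admitting no joining geodesic, and on a rank one Hadamard surface such an obstruction can only come from flat half planes: invoking the structural analysis of ends of non-positively curved surfaces from \cite{MR533654} and \cite{MR2255528}, there must exist a flat half plane $H\subseteq\XX$ whose boundary arc at infinity contains both $\xi$ and $\eta$, and at least one of them must lie in the interior of that arc, for otherwise $\xi,\eta$ would be the two endpoints of the boundary geodesic of $H$ and so trivially joinable. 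Let $I\subseteq\rand$ denote the union over all flat half planes of the interiors of these boundary arcs. Since $\XX/\Gamma$ contains only countably many $\Gamma$-orbits of flat half planes, $I$ is a countable union of open arcs. A point in the interior of such an arc is the endpoint of a ray inside the associated flat that is not parallel to its boundary geodesic, and a direct dynamics argument with the axial isometry stabilizing the half plane (iterates translate along the boundary axis, so approximating orbit points escape any tubular neighborhood of the ray) shows such a point cannot lie in $\radlim$. Hence $I\cap\radlim=\emptyset$ and $\mu_\xo(I)=0$. As $N\subseteq(I\times\rand)\cup(\rand\times I)$, we conclude $(\mu_\xo\otimes\mu_\xo)(N)=0$.

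Granted this, the conclusion is routine. Any $\Gamma$-invariant Borel set $A\subseteq\rand\times\rand$ meets $\joinrand$ in a $\Gamma$-invariant subset, which by hypothesis has measure $0$ or $(\mu_\xo\otimes\mu_\xo)(\joinrand)=1$, and so $A$ itself has measure $0$ or $1$; similarly the dissipative part of the action on $\rand\times\rand$ is $\Gamma$-invariant and meets the completely conservative conull set $\joinrand$ in a null set, hence is null. The delicate part is plainly the geometric claim that unjoinable distinct pairs are forced into this flat-half-plane configuration and that the arising interior arc-points escape $\radlim$; everything else is formal.
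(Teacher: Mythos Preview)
Your proof is correct and follows essentially the same route as the paper. Both arguments reduce to showing that $\joinrand$ has full $(\mu_\xo\otimes\mu_\xo)$-measure in $\rand\times\rand$: the paper phrases this as $\radlim\times\radlim\subseteq D\cup\joinrand$ (two distinct radial limit points can always be joined), while you argue the contrapositive, that a pair of distinct unjoinable points must have a coordinate in the interior of some flat boundary arc, hence outside $\radlim$. The paper's version is considerably terser and simply asserts the key geometric trichotomy, whereas you spell out the flat half-plane obstruction explicitly; your countability remark is not actually needed, and your appeal to an axial stabilizer of the half plane is exactly the structural input from \cite{MR533654} and \cite{MR2255528} that the paper also leans on (implicitly here and explicitly in the proof of Lemma~\ref{muradlim0dim2}).
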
 
\begin{proof} If $(\joinrand, \Gamma, (\mu_\xo\otimes\mu_\xo)\ein_{\joinrand})$ is ergodic, then $\mu_\xo(\radlim)=1$. From Proposition~\ref{atomicpart} we know that $\mu_\xo$ has no atoms, so   we get for the diagonal $D:=\{(\xi,\xi)\colon\xi\in\rand\}$ in $ \rand\times \rand$
\[ (\mu_\xo\otimes \mu_\xo)(D)=0.\]
Moreover, if $\xi,\eta\in\radlim$, then either $\xi=\eta$ (i.e. $\xi,\eta\in D$) or $(\xi\eta)$ is a rank one geodesic (i.e. $(\xi,\eta)\in\partial^2_1\XX$) or $(\xi\eta)$ is a flat half plane. In the last case, the only radial limit points  in the geometric boundary of $(\xi\eta)$ are $\xi$ and $\eta$. So we get
\[ \radlim\times\radlim\subseteq D \cup \partial_1^2\XX\cup \bigcup_{(\xi\eta)\in\joinrand\setminus  \partial^2_1\XX} \{\xi\}\times\{\eta\}\subseteq D \cup \partial^2\XX \]
and therefore
\begin{align*} 
(\mu_\xo\otimes \mu_\xo)(\rand\times\rand)&= (\mu_\xo\otimes \mu_\xo)(\radlim\times\radlim)\\ 
&\le (\mu_\xo\otimes \mu_\xo)(D)+(\mu_\xo\otimes\mu_\xo)(\partial^2\XX)=(\mu_\xo\otimes \mu_\xo)\ein_{\joinrand}(\joinrand).\end{align*}
\end{proof}

We remark that in higher dimension the existence and distribution of immersed flat submanifolds or higher rank symmetric spaces (of lower dimension than that of $M$) seems to be a difficult question (\cite{MR1132759}, \cite{MR908215},\cite{MR953675},\cite{MR994382}, \cite{MR1050413}); so we do not know whether Lemma~\ref{muradlim0dim2} and Lemma~\ref{ergodicequiv2} are true in dimension bigger than or equal to three. 

Together with Theorem~\ref{HTS} (from which we keep the notation), Lemma~\ref{ergodicequiv},~\ref{muradlim0dim2} and~\ref{ergodicequiv2} imply 

\begin{theorem}\label{HTS2}\ 
If $M=\XX/\Gamma$ is a non-elementary rank one surface, then, with the above notation, the conditions $(i)$ to $(iv)$ are equivalent in each of the two complementary cases:\\[2mm]
1.~Case:
\begin{enumerate}
\item[(i)] $\sum_{\gamma\in\Gamma} \e^{-\delta(\Gamma)d(\xo,\gamma\xo)}$ converges.
\item[(ii)] $\mu_\xo(\radlim)=0$.
\item[(iii)] $(\joinrand, \Gamma, (\mu_\xo\otimes\mu_\xo)\ein_{\joinrand})$ is completely dissipative and non-ergodic.
\item[(iv)] $(SM/\Gamma, (g_\Gamma^t)_{t\in\RR}, m_\Gamma)$ is completely dissipative and non-ergodic.
\end{enumerate}
2.~Case:
\begin{enumerate}
\item[(i)] $\sum_{\gamma\in\Gamma} \e^{-\delta(\Gamma)d(\xo,\gamma\xo)}$ diverges.
\item[(ii)] $\mu_\xo(\radlim)=1$.
\item[(iii)] $(\joinrand, \Gamma, \mu_\xo\otimes\mu_\xo)$ is completely conservative and ergodic.
\item[(iv)] $(SM, (g_\Gamma^t)_{t\in\RR}, m_\Gamma)$ is completely conservative and ergodic.
\end{enumerate}
\end{theorem}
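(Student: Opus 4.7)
The proof is a direct synthesis of Theorem~\ref{HTS} with the three surface-specific results Lemma~\ref{ergodicequiv}, Lemma~\ref{muradlim0dim2} and Lemma~\ref{ergodicequiv2}. In both cases the equivalences $(i)\Leftrightarrow(ii)\Leftrightarrow(iv)$ are already furnished by Theorem~\ref{HTS} applied to the non-elementary rank one surface $M=\XX/\Gamma$ (in dimension two, a weak rank one closed geodesic automatically gives a strong rank one one, as recalled in the introduction, so Theorem~\ref{HTS} applies). Hence the only genuinely new content is the dichotomy $(iii)$ for the $\Gamma$-action on the space of end point pairs of geodesics.

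For Case~1, I would start from $(ii)$: assuming $\mu_\xo(\radlim)=0$, Lemma~\ref{muradlim0dim2} immediately yields complete dissipativity of the system $(\joinrand,\Gamma,(\mu_\xo\otimes\mu_\xo)|_{\joinrand})$. For the non-ergodicity part of $(iii)$, I would invoke Lemma~\ref{ergodicequiv}: since Theorem~\ref{HTS} gives non-ergodicity of $(SM,(g^t_\Gamma)_{t\in\RR},m_\Gamma)$, the same lemma transfers non-ergodicity to the $\Gamma$-action on $\joinrand$. This closes the loop for Case~1.

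For Case~2, I would start from $(iv)$ given by Theorem~\ref{HTS}: ergodicity of $(SM,(g^t_\Gamma)_{t\in\RR},m_\Gamma)$. Lemma~\ref{ergodicequiv} then transports this to ergodicity of $(\joinrand,\Gamma,(\mu_\xo\otimes\mu_\xo)|_{\joinrand})$. At this stage I would apply Lemma~\ref{ergodicequiv2}, which in dimension two upgrades this ergodicity to ergodicity \emph{and} complete conservativity of $(\rand\times\rand,\Gamma,\mu_\xo\otimes\mu_\xo)$; since $\joinrand$ is a $\Gamma$-invariant Borel subset of $\rand\times\rand$, both properties restrict to the subsystem and yield $(iii)$.

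The main conceptual constraint---and the reason the statement is formulated for surfaces---lies precisely in Lemma~\ref{muradlim0dim2} and Lemma~\ref{ergodicequiv2}. Both rest on the fact that in a non-elementary rank one surface every flat half plane in $\XX$ projects into a cylindrical end of $M$, so its two ideal boundary points are necessarily fixed points of an axial isometry and therefore belong to $\radlim$. In higher dimensions the geometry of immersed flat or higher rank symmetric submanifolds is not sufficiently under control to give the analogous inclusion $\joinrand\subseteq \partial_1^2\XX\cup(\radlim\times\radlim)$; this is the obstruction I would expect to be unavoidable without substantial new geometric input, and it is the reason one cannot at present replace ``surface'' by ``manifold'' in the statement.
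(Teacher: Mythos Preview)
Your proposal is correct and follows the same route as the paper: the paper's own proof of Theorem~\ref{HTS2} is literally the single sentence ``Together with Theorem~\ref{HTS}\ldots, Lemma~\ref{ergodicequiv}, \ref{muradlim0dim2} and~\ref{ergodicequiv2} imply [the theorem]'', and you have simply spelled out how these ingredients combine. One small inaccuracy: Lemma~\ref{ergodicequiv} is \emph{not} surface-specific (it holds in arbitrary dimension, as is clear from its statement and from the construction of $m_\Gamma$); only Lemmas~\ref{muradlim0dim2} and~\ref{ergodicequiv2} require $\dim\XX=2$.
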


\vspace{4mm}
\bibliography{BibHTS}

\def\cprime{$'$}
\providecommand{\href}[2]{#2}
\providecommand{\arxiv}[1]{\href{http://arxiv.org/abs/#1}{arXiv:#1}}
\providecommand{\url}[1]{\texttt{#1}}
\providecommand{\urlprefix}{URL }
\begin{thebibliography}{10}

\bibitem{MR1676950}
\newblock J.~Aaronson and M.~Denker,
\newblock The {P}oincar\'e series of {$\mathbb C\setminus \mathbb Z$},
\newblock \emph{Ergodic Theory Dynam. Systems}, \textbf{19} (1999), 1--20,
\newblock \urlprefix\url{http://dx.doi.org/10.1017/S0143385799126592}.

\bibitem{MR656659}
\newblock W.~Ballmann,
\newblock Axial isometries of manifolds of nonpositive curvature,
\newblock \emph{Math. Ann.}, \textbf{259} (1982), 131--144.

\bibitem{MR819559}
\newblock W.~Ballmann,
\newblock Nonpositively curved manifolds of higher rank,
\newblock \emph{Ann. of Math. (2)}, \textbf{122} (1985), 597--609,
\newblock \urlprefix\url{http://dx.doi.org/10.2307/1971331}.

\bibitem{MR1377265}
\newblock W.~Ballmann,
\newblock \emph{Lectures on spaces of nonpositive curvature}, vol.~25 of DMV
  Seminar,
\newblock Birkh\"auser Verlag, Basel, 1995,
\newblock With an appendix by Misha Brin.

\bibitem{MR799256}
\newblock W.~Ballmann, M.~Brin and P.~Eberlein,
\newblock Structure of manifolds of nonpositive curvature. {I},
\newblock \emph{Ann. of Math. (2)}, \textbf{122} (1985), 171--203.

\bibitem{MR823981}
\newblock W.~Ballmann, M.~Gromov and V.~Schroeder,
\newblock \emph{Manifolds of nonpositive curvature}, vol.~61 of Progress in
  Mathematics,
\newblock Birkh\"auser Boston Inc., Boston, MA, 1985.

\bibitem{MR1132759}
\newblock V.~Bangert and V.~Schroeder,
\newblock Existence of flat tori in analytic manifolds of nonpositive
  curvature,
\newblock \emph{Ann. Sci. \'Ecole Norm. Sup. (4)}, \textbf{24} (1991),
  605--634.

\bibitem{MR1341941}
\newblock M.~Bourdon,
\newblock Structure conforme au bord et flot g\'eod\'esique d'un {${\rm
  CAT}(-1)$}-espace,
\newblock \emph{Enseign. Math. (2)}, \textbf{41} (1995), 63--102.

\bibitem{MR743026}
\newblock K.~Burns,
\newblock Hyperbolic behaviour of geodesic flows on manifolds with no focal
  points,
\newblock \emph{Ergodic Theory Dynam. Systems}, \textbf{3} (1983), 1--12,
\newblock \urlprefix\url{http://dx.doi.org/10.1017/S0143385700001796}.

\bibitem{MR908215}
\newblock K.~Burns and R.~Spatzier,
\newblock Manifolds of nonpositive curvature and their buildings,
\newblock \emph{Inst. Hautes \'Etudes Sci. Publ. Math.}, 35--59.

\bibitem{MR1207579}
\newblock M.~Coornaert and A.~Papadopoulos,
\newblock Une dichotomie de {H}opf pour les flots g\'eod\'esiques associ\'es
  aux groupes discrets d'isom\'etries des arbres,
\newblock \emph{Trans. Amer. Math. Soc.}, \textbf{343} (1994), 883--898.

\bibitem{MR1776078}
\newblock F.~Dal'bo, J.-P. Otal and M.~Peign{\'e},
\newblock S\'eries de {P}oincar\'e des groupes g\'eom\'etriquement finis,
\newblock \emph{Israel J. Math.}, \textbf{118} (2000), 109--124,
\newblock \urlprefix\url{http://dx.doi.org/10.1007/BF02803518}.

\bibitem{MR533654}
\newblock P.~Eberlein,
\newblock Surfaces of nonpositive curvature,
\newblock \emph{Mem. Amer. Math. Soc.}, \textbf{20} (1979), x+90,
\newblock \urlprefix\url{http://dx.doi.org/10.1090/memo/0218}.

\bibitem{Hopf}
\newblock E.~Hopf,
\newblock \emph{Ergodentheorie},
\newblock Springer, 1937.

\bibitem{MR0284564}
\newblock E.~Hopf,
\newblock Ergodic theory and the geodesic flow on surfaces of constant negative
  curvature,
\newblock \emph{Bull. Amer. Math. Soc.}, \textbf{77} (1971), 863--877.

\bibitem{MR1293874}
\newblock V.~A. Kaimanovich,
\newblock Ergodicity of harmonic invariant measures for the geodesic flow on
  hyperbolic spaces,
\newblock \emph{J. Reine Angew. Math.}, \textbf{455} (1994), 57--103.

\bibitem{MR1465601}
\newblock G.~Knieper,
\newblock On the asymptotic geometry of nonpositively curved manifolds,
\newblock \emph{Geom. Funct. Anal.}, \textbf{7} (1997), 755--782.

\bibitem{MR1652924}
\newblock G.~Knieper,
\newblock The uniqueness of the measure of maximal entropy for geodesic flows
  on rank {$1$} manifolds,
\newblock \emph{Ann. of Math. (2)}, \textbf{148} (1998), 291--314.

\bibitem{MR797411}
\newblock U.~Krengel,
\newblock \emph{Ergodic theorems}, vol.~6 of de Gruyter Studies in Mathematics,
\newblock Walter de Gruyter \& Co., Berlin, 1985,
\newblock With a supplement by Antoine Brunel.

\bibitem{MR2290453}
\newblock G.~Link,
\newblock Asymptotic geometry and growth of conjugacy classes of nonpositively
  curved manifolds,
\newblock \emph{Ann. Global Anal. Geom.}, \textbf{31} (2007), 37--57.

\bibitem{MR2255528}
\newblock G.~Link, M.~Peign{\'e} and J.-C. Picaud,
\newblock Sur les surfaces non-compactes de rang un,
\newblock \emph{Enseign. Math. (2)}, \textbf{52} (2006), 3--36.

\bibitem{MR1041575}
\newblock P.~J. Nicholls,
\newblock \emph{The ergodic theory of discrete groups}, vol. 143 of London
  Mathematical Society Lecture Note Series,
\newblock Cambridge University Press, Cambridge, 1989,
\newblock \urlprefix\url{http://dx.doi.org/10.1017/CBO9780511600678}.

\bibitem{MR2097356}
\newblock J.-P. Otal and M.~Peign{\'e},
\newblock Principe variationnel et groupes kleiniens,
\newblock \emph{Duke Math. J.}, \textbf{125} (2004), 15--44.

\bibitem{MR0450547}
\newblock S.~J. Patterson,
\newblock The limit set of a {F}uchsian group,
\newblock \emph{Acta Math.}, \textbf{136} (1976), 241--273.

\bibitem{MR2057305}
\newblock T.~Roblin,
\newblock Ergodicit\'e et \'equidistribution en courbure n\'egative,
\newblock \emph{M\'em. Soc. Math. Fr. (N.S.)}, vi+96.

\bibitem{MR2166367}
\newblock T.~Roblin,
\newblock Un th\'eor\`eme de {F}atou pour les densit\'es conformes avec
  applications aux rev\^etements galoisiens en courbure n\'egative,
\newblock \emph{Israel J. Math.}, \textbf{147} (2005), 333--357,
\newblock \urlprefix\url{http://dx.doi.org/10.1007/BF02785371}.

\bibitem{MR953675}
\newblock V.~Schroeder,
\newblock Existence of immersed tori in manifolds of nonpositive curvature,
\newblock \emph{J. Reine Angew. Math.}, \textbf{390} (1988), 32--46,
\newblock \urlprefix\url{http://dx.doi.org/10.1515/crll.1988.390.32}.

\bibitem{MR994382}
\newblock V.~Schroeder,
\newblock Structure of flat subspaces in low-dimensional manifolds of
  nonpositive curvature,
\newblock \emph{Manuscripta Math.}, \textbf{64} (1989), 77--105,
\newblock \urlprefix\url{http://dx.doi.org/10.1007/BF01182086}.

\bibitem{MR1050413}
\newblock V.~Schroeder,
\newblock Codimension one tori in manifolds of nonpositive curvature,
\newblock \emph{Geom. Dedicata}, \textbf{33} (1990), 251--263,
\newblock \urlprefix\url{http://dx.doi.org/10.1007/BF00181332}.

\bibitem{MR556586}
\newblock D.~Sullivan,
\newblock The density at infinity of a discrete group of hyperbolic motions,
\newblock \emph{Inst. Hautes \'Etudes Sci. Publ. Math.}, 171--202.

\bibitem{MR2245472}
\newblock M.~E. Taylor,
\newblock \emph{Measure theory and integration}, vol.~76 of Graduate Studies in
  Mathematics,
\newblock American Mathematical Society, Providence, RI, 2006.

\bibitem{MR0414898}
\newblock M.~Tsuji,
\newblock \emph{Potential theory in modern function theory},
\newblock Chelsea Publishing Co., New York, 1975,
\newblock Reprinting of the 1959 original.

\bibitem{MR1348871}
\newblock C.~Yue,
\newblock The ergodic theory of discrete isometry groups on manifolds of
  variable negative curvature,
\newblock \emph{Trans. Amer. Math. Soc.}, \textbf{348} (1996), 4965--5005.

\end{thebibliography}

\medskip
% The data information below will be filled by AIMS editorial staff
Received xxxx 20xx; revised xxxx 20xx.
\medskip

\end{document}